\newtheorem{lem}{Lemma}[section]
\newtheorem{thm}[lem]{Theorem}
\numberwithin{equation}{section}
\numberwithin{figure}{section}
\renewcommand{\leq}{\leqslant}
\renewcommand{\geq}{\geqslant}
\newcommand{\bA}{\mathbf{A}}
\newcommand{\bB}{\mathbf{B}}
\newcommand{\bC}{\mathbf{C}}
\newcommand{\bD}{\mathbf{D}}
\newcommand{\bb}{\mathbf{b}}
\newcommand{\bc}{\mathbf{c}}
\newcommand{\be}{\mathbf{e}}
\newcommand{\Bf}{\mathbf{f}}
\newcommand{\bg}{\mathbf{g}}
\newcommand{\bl}{\mathbf{l}}
\newcommand{\bM}{\mathbf{M}}
\newcommand{\bs}{\mathbf{s}} 
\newcommand{\bt}{\mathbf{t}}
\newcommand{\bv}{\mathbf{v}}
\newcommand{\bw}{\mathbf{w}}
\newcommand{\bx}{\mathbf{x}}
\newcommand{\G}{\mathcal{G}}
\newcommand{\transpose}{\textup{T}}
\def\dist{{\rm dist}}
\def\supp{{\rm supp}}
\newcommand{\dimB}{\dim_{\rm B}}
\newcommand{\dimH}{\dim_{\rm H}}
\title{The $q$th packing moments and the packing $L^q$-spectra of directed graph self-similar measures}     
\author{Graeme Boore }
\begin{document}

\maketitle

\begin{abstract}
Any self-similar directed graph iterated function system with probabilities, defined on $\mathbb{R}^m$, determines a unique list of self-similar Borel probability measures whose supports are the components of the attractor. Using an application of the Renewal Theorem we obtain an explicit calculable value for the power law behaviour of the $q$th packing moments  of the self-similar measures at scale $r$ as $r\to 0^+$ in the non-lattice case, with a corresponding limit for the lattice case. We do this  
\begin{description}
\item  (i) for any $q\in \mathbb{R}$ if the strong separation condition (SSC) holds, 
\item  (ii) for $q\geq 0$ if the weaker open set condition (OSC) holds, where we also assume that a non-negative  matrix associated with the system is irreducible.  
\end{description}
In the non-lattice case this enables the packing $L^q$-spectra and their exact rate of convergence to be determined. 
\end{abstract}

\section{Introduction}%\label{one}
The well-behaved properties of self-similar sets under the OSC are used repeatedly throughout this paper. We use \emph{standard IFS} to mean a self-similar $1$-vertex directed graph iterated function system and \emph{$n$-vertex IFS} as a shortening of self-similar $n$-vertex directed graph IFS. Under the terms of the Contraction Mapping Theorem any $n$-vertex IFS determines a unique $n$-component attractor. If in addition probability weights are assigned to the edges of the directed graph, as described in Subsection \ref{nvertex}, then we call the system an \emph{$n$-vertex IFS with probabilities}. A second application of the Contraction Mapping Theorem then ensures the existence of a unique list of $n$ self-similar Borel probability measures, one at each vertex, whose supports are the components of the attractor. Despite the presence of probabilities these are still deterministic IFSs as changing the probabilities doesn't change the attractor and so doesn't change the supports of the measures. Changing the probabilities only changes the values that the measures take. 

In \cite{Paper_Lalley} Lalley  obtained results for the power law behaviour of the packing and covering functions of a standard (OSC) IFS attractor. More general results were obtained by Olsen in \cite{Paper_Olsen_1} for the power law behaviour of the $q$th packing and covering moments of the self-similar measure of a standard (OSC) IFS with probabilities, along with the rate of convergence of the corresponding $L^q$-spectra. 

In fact the strong open set condition (SOSC) is assumed in \cite{Paper_Lalley} as it was only later that Schief  \cite{Paper_Schief} proved that the OSC is equivalent to the SOSC for standard IFSs. This equivalence was extended to $n$-vertex IFSs by Wang \cite{Paper_Wang}.

In Theorem \ref{theorem 1} we extend the packing part of \cite[Theorem 1.1]{Paper_Olsen_1} for standard IFSs with probabilities so that it applies to $n$-vertex IFSs with probabilities. This is useful work because it is reasonable to expect most $n$-vertex $(n \geq 2)$ IFSs defined on $\mathbb{R}^m$ to have attractors with components that are not the attractors of any standard IFS defined on $\mathbb{R}^m$. In fact this has been shown to be the case (for $n$-vertex IFSs satisfying the convex strong separation condition) for $m=1$ and $n=2$ by Boore and Falconer in \cite{Paper_GCB_KJF} and extended further to $m=1$ and $n \geq 2$ by Boore in \cite{Paper_GCB}. 

As was the case in \cite{Paper_Lalley,Paper_Olsen_1}, we prove Theorem \ref{theorem 1} by applying the Renewal Theorem. We use the Renewal Theorem for a system of Renewal Equations as stated by Crump in \cite[Theorem 3.1(ii)]{Paper_Crump}. This has the advantage that (in theory) the limits obtained for the power law behaviour of the $q$th packing moments
\begin{equation*}
\lim_{n \to  +\infty} \frac { M_u^q\left(F_u, e^{-(t + n\lambda)} \right) }  { e^{-\left(t + n\lambda\right)\left(-\beta\left(q\right)\right)} } , \quad \lim_{r \to 0^+} \frac { M_u^q\left(F_u,r\right) }  { r^{-\beta\left(q\right)} },
\end{equation*}
can be calculated, see the text preceding  \cite[Theorem 3.1(ii)]{Paper_Crump} for the details. In the non-lattice case the packing $L^q$-spectra and their rate of convergence follow directly from these limits as we show in Subsection \ref{A system of renewal equations}. Other results in the literature concerning $L^q$-spectra can be found in  \cite{Paper_Olsen_1} and the references there. 

We prove Theorems \ref{theorem 1} and \ref{theorem 2} in Sections \ref{three}, \ref{four} and \ref{five}, they were first proved in \cite[Theorems 4.3.1 and 4.7.19]{phdthesis_Boore}. Theorem \ref{theorem 2} has a key role to play in the proof of Theorem \ref{theorem 1}. Definitions of the notation and terminology used in their statements are given in Section \ref{two}. (To keep the notation a little bit simpler we use $\mathbf{P}$, $f_u\left(t\right)$, $C_u$ in Theorem \ref{theorem 1} and $C_{e,f}$ in Theorem \ref{theorem 2}, whereas strictly speaking we should use $\mathbf{P}^q$, $f_u^q\left(t\right)$, $C_u\left(q\right)$ and $C_{e,f}\left(q\right)$ since all these objects do actually depend on the particular value of $q$ chosen).  

\begin{thm}  
\label{theorem 1}
Let $\bigl(V,E^*,i,t,r,p,((\mathbb{R}^m,\left| \ \  \right|))_{u \in V},(S_e)_{e \in E^1}\bigr)$ be an $n$-vertex IFS with probabilities with attractor $\left(F_u\right)_{u \in V}$ where $\left(S_e\right)_{e \in E^1}$ are contracting similarities. Suppose that either
\begin{description}
\item  \quad  \ \textup{(i)} $q\in \mathbb{R}$ and the SSC holds, 
\item  or  \textup{(ii)} $q\geq 0$ the OSC holds and the non-negative $n \times n$ matrix $\bB\left(q, \gamma, l\right)$, as defined in Subsection \ref{matrix B}, is irreducible with $\rho\left(\bB\left(q, \gamma, l\right)\right) = 1$. 
\end{description}

Let $\mathbf{P}$ be the $n \times n$ matrix of measures as defined in Subsection \ref{lattice matrix} with $\mathbf{A}\left(q, \beta\right)$ the corresponding non-negative matrix as defined in Subsection \ref{matrix A}, where $\beta=\beta\left(q\right)\in \mathbb{R}$ is the unique number such that $\rho\left(\mathbf{A}\left(q, \beta \right)\right) = 1$ and let $u \in V$. 

\begin{description}
\item  \textup{(a)}    If  $\mathbf{P}$ is a lattice matrix with span $\lambda>0$ there exists a positive periodic function $f_u: \mathbb{R} \to \mathbb{R}^+$, with period $\lambda$, such that
\begin{equation*}
\lim_{n \to  +\infty} \frac { M_u^q\left( F_u, e^{-(t + n\lambda)} \right) }  { e^{-\left(t + n\lambda\right)\left(-\beta\left(q\right)\right)} }   =   f_u\left(t\right)
\end{equation*}
for each $t \geq \max \left\{  \ln\left(r_e^{-1}\right)  :  e \in E^1 \right\}$.

It follows that
\begin{equation*}
\lim_{n \to  +\infty} \frac { \ln \left(M_u^q\left( F_u, e^{-(t + n\lambda)} \right) \right)}  { \left(t + n\lambda\right) }  =   \beta\left(q\right)
\end{equation*}
for each $t \geq \max \left\{  \ln\left(r_e^{-1}\right) : e \in E^1 \right\}$ and the rate of convergence as $n \to +\infty$ is 
\begin{equation*} 
\frac { \ln\left( M_u^q\left( F_u, e^{-\left(t + n\lambda\right)} \right) \right) }  { \left(t + n\lambda\right) }  =   \beta\left(q\right)  +    O \left(\frac{1}{n}\right). 
\end{equation*}	
	
\item  \textup{(b)}  If  $\mathbf{P}$ is not a lattice matrix there exists a constant $C_u > 0$ such that
\begin{equation*} 
\lim_{r \to 0^+} \frac { M_u^q\left(F_u,r\right) }  { r^{-\beta\left(q\right)} }   =   C_u. 
\end{equation*}
		 
It follows that the packing $L^q$-spectrum on $F_u$ of $\mu_u$ is given by 
\begin{equation*}
\lim_{r\to 0^+}\frac{\ln \left(M_u^q\left(F_u,r\right)\right)} {-\ln r}  =   \beta\left(q\right)  
\end{equation*}
and the rate of convergence as $r \to 0^+$ is 
\begin{equation*}
\frac{\ln \left(M_u^q\left(F_u,r\right)\right)} {-\ln r}  =  \beta\left(q\right) +   O\left(\frac{1}{-\ln r}\right).  
\end{equation*}
\end{description}
\end{thm}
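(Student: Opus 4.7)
The plan is to reduce the theorem to the Renewal Theorem for a system of renewal equations, exactly as in \cite[Theorem 3.1(ii)]{Paper_Crump}, applied to a vector of functions indexed by the vertex set $V$. The starting point is the self-similar decomposition $F_u = \bigcup_{e \in E^1,\, i(e)=u} S_e(F_{t(e)})$ and the associated measure identity for $\mu_u$. Writing $t = \ln(1/r)$ and setting $Z_u(t) = M_u^q(F_u, e^{-t}) \cdot e^{t\beta(q)}$, I would aim to establish a recursion of the form
\begin{equation*}
Z_u(t) \;=\; \sum_{e \,:\, i(e)=u} p_e^{\,q}\, r_e^{\beta(q)}\, Z_{t(e)}\!\bigl(t - \ln r_e^{-1}\bigr) \;+\; z_u(t),
\end{equation*}
valid for $t \geq \max\{\ln r_e^{-1} : e \in E^1\}$, where $z_u$ is a bounded, compactly supported driving term carrying the small-$t$ initial data and the contribution from packing balls that do not fit neatly inside a single sub-piece. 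The transition weights $p_e^{q} r_e^{\beta(q)}$ summed over parallel edges are exactly the entries of $\mathbf{A}(q,\beta(q))$, and $\beta(q)$ is determined by $\rho(\mathbf{A}(q,\beta(q))) = 1$, which plays the role of the Perron eigenvalue normalisation required by Crump's theorem.

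The first technical step is thus to justify this renewal equation and to control the error term $z_u$. Under the SSC (case (i)) this is straightforward because the pieces $S_e(F_{t(e)})$ are pairwise disjoint, and for sufficiently small $r$ any $r$-packing of $F_u$ restricts essentially to packings of the sub-pieces, with a boundary correction of controlled size. Under the OSC (case (ii)) the sub-pieces may meet on their relative boundaries, and this is precisely where Theorem \ref{theorem 2} (which provides uniform bounds on the number of sub-cylinders a small ball can intersect, via the matrix $\mathbf{B}(q,\gamma,l)$) is invoked to bound the overlap contribution. The irreducibility hypothesis on $\mathbf{B}(q,\gamma,l)$ with $\rho(\mathbf{B}(q,\gamma,l)) = 1$ gives uniform-in-vertex control of these overlaps and is what restricts case (ii) to $q \geq 0$.

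Once the renewal system is in place, the second step is simply to verify the hypotheses of \cite[Theorem 3.1(ii)]{Paper_Crump}: irreducibility and spectral normalisation for $\mathbf{A}(q,\beta(q))$, and direct Riemann integrability of the driving vector $(z_u)$ (or its lattice analogue when $\mathbf{P}$ is lattice with span $\lambda$). The Perron--Frobenius structure guaranteed by the irreducibility hypothesis gives a strictly positive left and right eigenvector, from which the limits $C_u$ in part (b) and the periodic functions $f_u$ in part (a) arise as explicit convolution/series expressions. Positivity of $C_u$ and of $f_u$ follows from positivity of the Perron eigenvectors together with non-triviality of $z_u$ (ensured because $M_u^q(F_u, r)$ is strictly positive for small $r$).

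The final step, deducing the $L^q$-spectrum statements, is routine: taking logarithms of the asymptotic identities, dividing by $t+n\lambda$ (resp.\ by $-\ln r$), and using that $\ln f_u(t)$ and $\ln C_u$ are bounded, yields the stated limits $\beta(q)$ with rates $O(1/n)$ and $O(1/(-\ln r))$ respectively. The main obstacle I anticipate is the first step under the OSC: cleanly separating the genuine self-similar contribution from the overlap error in $M_u^q(F_u, r)$ and verifying that the resulting driving term $z_u$ is admissible (bounded, compactly supported, and directly Riemann integrable). This is where the bulk of the technical work lies, and it is exactly the purpose for which Theorem \ref{theorem 2} is established.
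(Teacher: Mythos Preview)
Your architecture matches the paper's: define $H_u(t)=e^{-t\beta(q)}M_u^q(F_u,e^{-t})$ (note the sign---your $e^{t\beta(q)}$ is a slip that would spoil the recursion), derive the renewal system with kernel $\mathbf{P}$, apply Crump's theorem, and reduce everything to direct Riemann integrability of the driving term $h_u$. The SSC case and the logarithmic deductions are exactly as you describe.

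Your OSC step, however, contains a real misconception. The driving term $h_u$ is \emph{not} compactly supported under the OSC: the level-$1$ pieces may touch at all scales, so the error $L_u(r)=M_u^q(F_u,r)-\sum_e p_e^q M_{t(e)}^q(F_{t(e)},r_e^{-1}r)$ need not vanish for any small $r$. What the paper actually does is bound $|h_u(t)|\le e^{-t\beta(q)}\sum_{e\ne f}Q_{e,f}^q(e^{-t})$ (Lemma~\ref{OSC1}) and then invoke Theorem~\ref{theorem 2} to get $Q_{e,f}^q(r)\le C\,r^{-\gamma(q)}$, yielding $|h_u(t)|\le C' e^{-t(\beta(q)-\gamma(q))}$. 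The ingredient missing from your outline is the strict inequality $\gamma(q)<\beta(q)$ (Lemma~\ref{g<b}); this is exactly what the irreducibility of $\mathbf{B}(q,\gamma,l)$ buys, and it is what converts the Theorem~\ref{theorem 2} bound into genuine exponential decay and hence direct Riemann integrability. Your description of Theorem~\ref{theorem 2} as a cylinder-counting lemma is also off: it is a bound on the packing moment $Q_{e,f}^q(r)$ near overlaps, not a combinatorial count. Finally, the restriction to $q\ge 0$ does not come from $\mathbf{B}$; it enters only in the lower bound $M_u^q(S_e(F_{t(e)}),r)\ge p_e^q M_{t(e)}^q(F_{t(e)},r_e^{-1}r)$ (Lemma~\ref{OSCb}(b)), which fails for $q<0$ when balls straddle several pieces. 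Theorem~\ref{theorem 2} itself is valid for all $q\in\mathbb{R}$.
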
   

Theorem \ref{theorem 2} is also of interest in its own right. For example it can be used to provide information relating the upper (multifractal) $q$ box-dimension and the (multifractal) $q$ Hausdorff dimension, as shown in \cite[Theorem 4.3.2]{phdthesis_Boore}, which extends some of the work in \cite{Paper_Olsen_2} from standard IFSs with probabilities to more general $n$-vertex IFSs with probabilities. (The (multifractal) $q$ box-dimension is another name for the packing $L^q$-spectrum used to emphasise the fact that the packing $L^q$-spectrum is a measure-theoretic generalisation of the box-counting dimension).

\begin{thm} 
\label{theorem 2}
Let $\bigl(V,E^*,i,t,r,p,((\mathbb{R}^m,\left| \ \  \right|))_{u \in V},(S_e)_{e \in E^1}\bigr)$ be an $n$-vertex IFS with probabilities with attractor $\left(F_u\right)_{u \in V}$ where $\left(S_e\right)_{e \in E^1}$ are contracting similarities. Suppose that the OSC holds and the non-negative $n \times n$ matrix $\bB\left(q, \gamma, l\right)$, as defined in Subsection \ref{matrix B}, is irreducible. Let $u \in V$, $q\in \mathbb{R}$ and let $\gamma=\gamma\left(q\right)\in \mathbb{R}$ be the unique number such that $\rho\left(\bB\left(q,\gamma,l\right)\right)=1$. Let $r\in \left(0,\delta\right)$ where $\delta$ is as defined in Equation (\ref{delta}) and let $e,f \in E_u^1,\, e \neq f$.

Then
\begin{equation*}
Q_{e,f}^q\left(r\right)=M_u^q\left(S_e(F_{t(e)})\cap S_f(F_{t(f)})(r),r\right)\leq C_{e,f}\ r^{-\gamma\left(q\right)} 
\end{equation*}
for some constant $C_{e,f} > 0$.
\end{thm}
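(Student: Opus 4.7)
My plan is to bound $Q_{e,f}^q(r)$ by a recursive estimate on pairs of distinct cylinders that both meet the overlap, and then read off the bound $r^{-\gamma(q)}$ from the spectral normalisation $\rho(\bB(q,\gamma,l))=1$. I would begin by fixing OSC open sets $\{U_v\}_{v \in V}$ so that for $e \neq f$ in $E_u^1$ the images $S_e(U_{t(e)})$ and $S_f(U_{t(f)})$ are disjoint; consequently $S_e(F_{t(e)}) \cap S_f(F_{t(f)})$ lies on their common boundary and carries no $\mu_u$-interior. Self-similarity then gives an iterated decomposition into intersections $S_{\bs}(F_{t(\bs)}) \cap S_{\bt}(F_{t(\bt)})$ as $(\bs,\bt)$ ranges over pairs of finite paths with $\bs$ extending $e$ and $\bt$ extending $f$.

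For a given scale $r\in(0,\delta)$ I would then use a stopping-time construction: restrict attention to pairs $(\bs,\bt)$ extending $(e,f)$ that have exactly $l$ additional edges (where $l$ is the fixed parameter in the definition of $\bB(q,\gamma,l)$) and for which $S_\bs(F_{t(\bs)})$ and $S_\bt(F_{t(\bt)})$ still meet within distance $r$. By OSC only a uniformly bounded number of such pairs can meet any given ball of radius $r$, and, using $\mu_u(S_\bs(A)) = p_\bs\,\mu_{t(\bs)}(A)$, an optimal $r$-packing of $S_e(F_{t(e)}) \cap S_f(F_{t(f)})(r)$ splits into packings inside each admissible pair of sub-cylinders, plus a uniformly bounded local contribution from those cylinders whose diameters are already comparable to $r$. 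Taking $q$th moments, each admissible sub-pair contributes a factor that by construction matches the corresponding entry of $\bB(q,\gamma,l)$ multiplied by $Q_{e',f'}^q(r')$ at the rescaled radius; organising these contributions as a vector indexed by edge pairs yields a matrix inequality of the schematic form
\begin{equation*}
\bigl( Q_{e,f}^q(r)\, r^{\gamma(q)} \bigr)_{(e,f)} \leq \bB(q,\gamma,l)\, \bigl( Q_{e',f'}^q(r')\,(r')^{\gamma(q)} \bigr)_{(e',f')} + O(1).
\end{equation*}

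Iterating this inequality $O(-\ln r)$ times produces a bound involving $\bB(q,\gamma,l)^n$. Since $\bB$ is non-negative, irreducible and has spectral radius $1$, Perron--Frobenius supplies a strictly positive eigenvector $\bv$ with $\bB\bv = \bv$; entrywise comparison with $\bv$ shows that the iterated sum stays uniformly bounded, and unwinding the normalisation yields $Q_{e,f}^q(r)\leq C_{e,f}\,r^{-\gamma(q)}$ for a constant $C_{e,f}>0$ depending only on the starting pair.

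The main obstacle will be the bookkeeping: the stopping-time pair decomposition must close precisely onto the definition of $\bB(q,\gamma,l)$ given in Subsection \ref{matrix B}, so that the recursion is driven by $\bB$ itself rather than by some larger accounting matrix; uniform control of the local terms across all scales is required; and, for negative $q$, a uniform lower bound $\mu_u(B(x,r))\geq c\,r^s$ (which follows from OSC together with self-similarity of the measure) must be propagated through the recursion without loss.
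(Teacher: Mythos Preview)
Your outline has the right overall shape---a recursive inequality controlled by the spectral radius of $\bB(q,\gamma,l)$ and a Perron--Frobenius eigenvector---but there is a genuine gap at exactly the point you flag as ``bookkeeping'', and it is in fact the heart of the argument.

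First, an indexing mismatch: $\bB(q,\gamma,l)$ as defined in Subsection~\ref{matrix B} is an $n\times n$ matrix with $n=\#V$, its rows and columns indexed by \emph{vertices}, not by edge pairs. Your proposed recursion is on a vector indexed by pairs $(e,f)$, so it cannot be driven by $\bB$ as stated; you would need either to collapse your pair index to a vertex index or to explain why your larger pair-indexed matrix has the same spectral radius as $\bB$. The paper does the former: it abandons the pair $(e,f)$ after one step and instead tracks, for each vertex $w$, a scalar quantity $G_w(r)$ summing $p_{\bg}^q$ over paths $\bg\in E_w^*$ of the appropriate scale that \emph{avoid every $\bl_v$ as a subpath}. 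The resulting vector $(\mathcal G_w(r))_{w\in V}$ satisfies the clean recursion $\mathcal G_w(r)\le \sum_{z}\sum_{\bs\in E_{wz}^l,\,\bs\ne\bl_w} p_{\bs}^q r_{\bs}^{\gamma}\,\mathcal G_z(r/r_{\bs})$, which is exactly multiplication by $\bB(q,\gamma,l)$.

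Second, and more importantly, you do not identify the mechanism that forces the recursion onto $\bB$ rather than onto the full matrix $\bA(q,\cdot,l)$. The entries of $\bB$ differ from those of $\bA^l$ only by the deletion of the single path $\bl_u$ from each row $u$. The reason these paths can be deleted is geometric: by the SOSC construction in Equation~\eqref{l_u}, $S_{\bl_v}(F_{t(\bl_v)})$ lies strictly inside the open set $U_v$, at positive distance $c_v$ from $\mathbb{R}^m\setminus U_v$. The key lemma (Lemma~\ref{OSCd}/\ref{OSCh}) then says that any path $\be\in E_u^*(r)$ with $S_\be(F_{t(\be)})$ within distance $r$ of both $S_e(F_{t(e)})$ and $S_f(F_{t(f)})$ must have $\bl_v\not\subset e_2\cdots e_{|\be|-N}$ for every $v$: a cylinder that threads through some $\bl_v$ is pulled into the interior of an OSC set and cannot be near the overlap. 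This subpath-avoidance is what makes the combinatorics collapse to $\bB$, and without it your recursion would be governed by $\bA^l$, whose spectral radius is $1$ at $\beta(q)$, not at $\gamma(q)$---giving no improvement. Your proposal treats this as a detail to be filled in, but it is the substantive step; everything else (the volume-counting Lemma~\ref{OSCf}, the H\"older/Minkowski bound Lemma~\ref{OSCg}, the induction in Lemma~\ref{OSC9}) is routine once this is in place.
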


\section{Notation and background theory}\label{two}
We often use a notation of the form $\left(A_c\right)_{c\in B}$ and $\left(A\right)_{c\in B}$ when $B$ is a finite set of $n$ elements as this is just a convenient way of writing down ordered $n$-tuples. That is, if $B$ is ordered as $B=\left(b_1,b_2,\ldots,b_n\right)$, then $\left(A_c\right)_{c\in B}=\left(A_{b_1},A_{b_2},\ldots,A_{b_n}\right)$ and $\left(A\right)_{c\in B}=\left(A,A,\ldots,A\right)$.

We use  $\left| \ \ \right|$ to indicate the length of a sequence, the Euclidean metric and the diameter of a set in $\mathbb{R}^m$, where the intended meaning should be clear from the context. 

Let $x \in \mathbb{R}^m$ then the \emph{closed ball of centre $x$ and radius $r>0$} is defined as
\begin{equation*}
B\left(x,r\right)=\left\{y : y \in \mathbb{R}^m, \left|x - y\right| \leq r\right\}
\end{equation*}
with $S(x,r)$ the corresponding open ball.

The \emph{diameter of a non-empty set} $A\subset \mathbb{R}^m$ is defined as
\begin{equation*}
\left|A\right|=\sup\left\{\left|x - y\right|: x,y\in A\right\}
\end{equation*}
and we put $\left|\emptyset\right|=0$.

The \emph{distance between a point $x \in \mathbb{R}^m$ and a non-empty set} $A\subset \mathbb{R}^m$ is defined as
\begin {equation*}
\dist\left(x,A\right)=\inf\left\{\left|x-a\right| : a \in A\right\}.
\end{equation*}

The \emph{distance between non-empty sets} $A,B\subset \mathbb{R}^m$ is defined as
\begin {equation*}
\dist\left(A,B\right)=\inf\left\{\left|a - b\right|: a \in A, \, b \in B\right\}.
\end{equation*}

For $r > 0$ the \emph{closed $r$-neighbourhood of a non-empty set $A \subset \mathbb{R}^m$} is defined as
\begin {equation*}
A\left(r\right)=\left\{ x : x \in \mathbb{R}^m, \, \dist(x,A)\leq r \right\}.
\end {equation*}

The rest of this section provides the notation and definitions for the terminology used in the statements of Theorems \ref{theorem 1} and \ref{theorem 2}. This is the machinery required in order to apply the Renewal Theorem for a system of renewal equations as stated in \cite[Theorem 3.1(ii)]{Paper_Crump} and Subsection \ref{2renewal_theorem}, which is used in the proof of Theorem \ref{theorem 1} in Section \ref{three}.

\subsection{$n$-vertex IFSs with probabilities}\label{nvertex} 

We use $\bigl(V,E^*,i,t,r,p,((X_{v},d_{v}))_{v \in V},(S_e)_{e \in E^1}\bigr)$ to indicate an \emph{$n$-vertex IFS with probabilities} where $\bigl(V,E^*,i,t\bigr)$ is the associated \emph{directed graph}, $V$ is the set of all vertices, $E^*$ is the set of all finite (directed) paths, $i:E^* \to V$ and $t:E^* \to V$ are the initial and terminal vertex functions. The set of all (directed) edges in the graph, that is the set of paths of length $1$, is written as $E^1$ with $E^1\subset E^*$. $V$ and $E^1$ are always assumed to be finite sets. We use $E^1_u$ to indicate the set of all edges with initial vertex $u$, $E^k_u$ for the set of all paths of length $k$ with initial vertex $u$, $E^k_{uv}$ for the set of all paths of length $k$ starting at the vertex $u$ and finishing at $v$, $E_u^{\mathbb{N}}$ for the set of all infinite paths starting at the vertex $u$ and so on. 

A finite \emph{(directed) path} $\be \in E^*$ is a finite string of consecutive edges so a path of length $k$ can be written as $\be=e_1 \cdots e_k$ for some edges $e_i \in E^1$ with $t(e_i)=i(e_{i+1})$ for $1\leq i < k$. The initial vertex of a path is the initial vertex of its first edge so $i(\be)=i(e_1)$ and similarly $t(\be)=t(e_k)$. The \emph{vertex list} of a path $\be=e_1\cdots e_k \in E^*$ is  $v_1v_2v_3\cdots v_{k+1} = i(e_1) t(e_1) t(e_2) \cdots $ $t(e_k)$ and shows the order in which a path visits its vertices. 

For $\Bf, \bg \in E^*$,  $\Bf$ is a \emph{subpath} of $\bg$ if and only if $\bg = \bs \Bf \bt$ for some $\bs, \bt \in E^*$, where we assume the empty path is an element of $E^*$. We use the notation $\Bf \subset \bg$ to indicate that $\Bf$ is a subpath of $\bg$.
   
For $\Bf, \bg \in E^*$,  $\Bf$ is  \emph{not a subpath} of $\bg$ if and only if $\bg \neq \bs \Bf \bt$ for all $\bs, \bt \in E^*$ and we use the notation $\Bf \not\subset \bg$ to indicate that $\Bf$ is not a subpath of $\bg$.

We assume the directed graph is strongly connected and that each vertex in the directed graph has at least two edges leaving it, this is to avoid components of the attractor (defined below) that consist of single point sets or are just scalar copies of those at other vertices (see \cite{Paper_Edgar_Mauldin}). 

The \emph{contraction ratio function} $r:E^*\to (0,1)$ assigns contraction ratios to the finite paths in the graph. To each vertex $v \in V$ is associated the non-empty complete metric space $(X_{v},d_{v})$ and to each directed edge $e\in E^1$ is assigned a contraction $S_{e}:X_{t(e)} \to X_{i(e)}$ which has the contraction ratio given by the function $r(e) = r_{e}$. We follow the convention already established in the literature, see \cite{Book_Edgar2, Paper_Edgar_Mauldin}, that $S_e$ maps in the opposite direction to the direction of the edge $e$ that it is associated with in the graph.  The contraction ratio along a path $\be=e_1e_2\cdots e_k \in E^*$ is defined as $r(\be)=r_{\be}=r_{e_1}r_{e_2}\cdots r_{e_k}$. The ratio $r_{\be}$ is the ratio for the contraction $S_{\be}:X_{t(\be)} \to X_{i(\be)}$ along the path $\be$ where $S_{\be}=S_{e_1}\circ S_{e_2}\circ \cdots \circ S_{e_k}$.

The \emph{probability function} $p:E^*\to (0,1)$, where for an edge $e \in E^1$ we write $p(e)=p_{e}$, is such that 
\begin{equation}
\label{probability function}
\sum_{ e\in E_{u}^1 }p_{e} = \sum_{v\in V} \biggl(  \  \sum_{e\in E_{uv}^1}p_{e} \  \biggr) = 1
\end{equation}
for any vertex $u \in V$. That is the probability weights across all the edges leaving a vertex always sum to one. For a path  $\be=e_1e_2\cdots e_k \in E^*$ we define $p(\be)=p_{\be}=p_{e_1}p_{e_2}\cdots p_{e_k}$. 

In this paper we are only going to be concerned with $n$-vertex IFSs defined on $m$-dimensional Euclidean space where $((X_{v},d_{v}))_{u \in V} = ((\mathbb{R}^m,\left| \ \  \right|))_{u \in V}$ and $(S_e)_{e \in E^1}$ are contracting similarities and not just contractions. However it's worth pointing out that the Invariance Equations \eqref{Invariance 1} and \eqref{Invariance 2} are the result of applying the Contraction Mapping Theorem and so they also hold when the similarities $(S_e)_{e \in E^1}$ are more general contractions. We use $K\left(\mathbb{R}^m\right)$ to denote the set of all non-empty compact subsets of $\mathbb{R}^m$. Using the Contraction Mapping Theorem it can be shown that an $n$-vertex IFS $\bigl(V, E^*, i, t, r, p, ((\mathbb{R}^m,\left| \ \  \right|))_{u \in V}, (S_e)_{e \in E^1} \bigr)$ with probabilities determines a unique list of non-empty compact sets $\left(F_u\right)_{u \in V} \in \left(K\left(\mathbb{R}^m\right)\right)^n$ which satisfies the invariance equation 
\begin{equation}
\label{Invariance 1}
 \left(F_u\right)_{u \in V} = \biggl( \ \bigcup_{ \substack{e\in E_u^1} }S_e\left(F_{t(e)}\right) \ \biggr)_{u \in V}
\end{equation} 
see  \cite[Theorem 1.3.4]{phdthesis_Boore}, \cite[Theorem 4.3.5]{Book_Edgar2} or \cite[Theorem 1]{Paper_Mauldin_Williams}. Under the terms of the Contraction Mapping Theorem $(F_u)_{u \in V}$ is known as the \emph{attractor} of the system and  we call the $n$ non-empty compact sets $F_u$,  $u \in V$, the \emph{components of the attractor}. 

Another application of the Contraction Mapping Theorem determines a unique list of Borel probability measures, $(\mu_u)_{u \in V}$, such that  
\begin{equation}
\label{Invariance 2}
\left(\mu_u\left(A_u\right)\right)_{u \in V}=\biggl( \ \sum_{ \substack{ e\in E_u^1}  }p_{e} \mu_{t(e)}\left(S_{e}^{-1}(A_u)\right)  \ \biggr)_{u \in V}
\end{equation}
for all Borel sets $\left(A_u\right)_{u \in V}\subset \left(\mathbb{R}^m\right)^{n}$, with $\left(\supp \mu_u\right)_{u \in V} = \left(F_u\right)_{u \in V}$, see \cite[Proposition 3]{Paper_Wang}. For the standard $1$-vertex case see \cite[Theorem 2.8]{Book_KJF1}. When $(S_e)_{e \in E^1}$ are contracting similarities $(\mu_u)_{u \in V}$ are known as \emph{directed graph self-similar measures}. 

For a set $A\subset \mathbb{R}^m$, we use the usual notation $\mathcal{H}^s(A)$ for the $s$-dimensional Hausdorff measure, $\dimH A$  and $\dimB A$ for the Hausdorff and box-counting dimension. The next theorem is the main dimension result for $n$-vertex IFSs defined on $\mathbb{R}^m$, see \cite[Theorem 3]{Paper_Mauldin_Williams} and also \cite[Theorem 6.9.6]{Book_Edgar2}. For standard IFSs with $n=1$ this is the same as \cite[Theorem 9.3]{Book_KJF2}.

\begin{thm} 
\label{dimension}
Let $\bigl(V,E^*,i,t,r,((\mathbb{R}^m,\left| \ \  \right|))_{u \in V},(S_e)_{e \in E^1}\bigr)$ be an $n$-vertex IFS with attractor $(F_u)_{u \in V}$ where the mappings $(S_e)_{e \in E^1}$ are contracting similarities. Let $\bA(t)$ denote the $n\times n$ matrix 
whose $uv$th entry is
\begin{equation*}
A_{uv}(t) = \sum_{e\in E_{uv}^1} r_e^t,
\end{equation*}
let $\rho\left(\bA(t)\right)$ be the spectral radius of $\bA(t)$, and let $s$ be the unique non-negative real number that is the solution of $\rho\left(\bA(t)\right)=1$.  

If the OSC is satisfied then, for each $u \in V$, $s = \dimH F_u = \dimB F_u$ and $0 < \mathcal{H}^s \left(F_u\right) < +\infty$.
\end{thm}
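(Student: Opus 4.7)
The plan is to combine a Perron--Frobenius analysis of $\bA(t)$ with the graph-directed analogue of Hutchinson's mass distribution argument. Because the directed graph is strongly connected, $\bA(t)$ is irreducible for every $t\geq 0$, so $\rho(\bA(t))$ is a simple eigenvalue with a strictly positive right eigenvector $(c_u)_{u\in V}$. The map $t\mapsto \rho(\bA(t))$ is continuous, strictly decreasing, tends to $+\infty$ as $t\to 0^+$ (since each vertex has at least two outgoing edges) and to $0$ as $t\to\infty$, so $s$ is uniquely determined, and the eigenvector equation at $t=s$ reads $\sum_{e\in E_u^1} r_e^s\,c_{t(e)} = c_u$.

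I would then build the canonical self-similar measure by setting $p_e = r_e^s\,c_{t(e)}/c_u$ for $e\in E_u^1$, which turns the system into an $n$-vertex IFS with probabilities; invariance equation \eqref{Invariance 2} produces Borel measures $\mu_u$ on $F_u$ with $\mu_u\bigl(S_{\be}(F_{t(\be)})\bigr) = r_{\be}^s\,c_{t(\be)}/c_u$ for every path $\be$ starting at $u$. For the upper bound $\dimB F_u \leq s$, iterate \eqref{Invariance 1} and use the cut-set
$$\mathcal{P}_r^u = \{\be = e_1\cdots e_k \in E_u^* : r_{\be} \leq r < r_{e_1\cdots e_{k-1}}\};$$
the images $S_{\be}(F_{t(\be)})$ for $\be\in\mathcal{P}_r^u$ cover $F_u$, each has diameter at most $r\max_v|F_v|$, and iterating the eigenvector identity gives $\sum_{\be\in\mathcal{P}_r^u} r_{\be}^s\,c_{t(\be)} = c_u$, whence $|\mathcal{P}_r^u| = O(r^{-s})$.

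The matching lower bound comes from the mass distribution principle applied to $\mu_u$. Let $(U_v)_{v\in V}$ witness the OSC, so $S_e(U_{t(e)})\subset U_{i(e)}$ and $\{S_e(U_{t(e)})\}_{e\in E_u^1}$ is disjoint for each $u$. Given a ball $B(x,\rho)$, count the $\be\in\mathcal{P}_\rho^u$ with $S_{\be}(U_{t(\be)})\cap B(x,\rho)\neq\emptyset$: these open images are pairwise disjoint and (via the SOSC discussed below) each contains a Euclidean ball of radius comparable to $r_{\be}\geq \rho\min_e r_e$, so a volume-packing estimate in $\mathbb{R}^m$ bounds their number by a constant $K$ depending only on the geometry. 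Summing $\mu_u(S_{\be}(F_{t(\be)})) \leq C r_{\be}^s \leq C\rho^s$ over these paths yields $\mu_u(B(x,\rho)) \leq CK\rho^s$ with constants uniform in $u,x,\rho$, hence $\mathcal{H}^s(F_u) > 0$ and $s \leq \dimH F_u$.

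The main obstacle is ensuring that the inscribed-ball estimate inside each $S_{\be}(U_{t(\be)})$ holds uniformly in $\be$ and $u$: this requires $U_v\cap F_v\neq\emptyset$ for every vertex $v$, which is precisely the strong open set condition. The cleanest fix is to appeal to Wang's extension to $n$-vertex IFSs of Schief's theorem that OSC is equivalent to SOSC, cited in the introduction; once SOSC is available the argument parallels the standard single-vertex proof, with the vertex-indexed constants absorbed by the positive eigenvector $(c_u)_{u\in V}$, yielding $s=\dimH F_u =\dimB F_u$ and $0<\mathcal{H}^s(F_u)<+\infty$.
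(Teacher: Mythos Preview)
The paper does not give its own proof of this theorem: it is stated as background and attributed to Mauldin--Williams \cite[Theorem 3]{Paper_Mauldin_Williams} and Edgar \cite[Theorem 6.9.6]{Book_Edgar2}. Your sketch is essentially the standard Mauldin--Williams argument that those references contain, so in spirit you are aligned with what the paper invokes.

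Two small corrections. First, $\rho(\bA(t))$ does not tend to $+\infty$ as $t\to 0^+$; rather $\rho(\bA(0))$ is the finite Perron eigenvalue of the edge-count matrix, and what you need (and have, since each vertex has at least two outgoing edges) is $\rho(\bA(0))\geq 2>1$, together with strict monotonicity and $\rho(\bA(t))\to 0$, to pin down $s$ uniquely. Second, the SOSC is not what makes the inscribed-ball estimate work: each $U_v$ is a non-empty open set, hence already contains some ball of radius $\varrho_v>0$, and uniformity over the finitely many vertices is automatic. The OSC alone gives disjointness of the $S_{\be}(U_{t(\be)})$ for $\be$ in your cut-set and, via $F_v\subset\overline{U}_v$, lets you run the volume-packing count exactly as in the paper's Lemma~\ref{OSCf}. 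So your final paragraph overstates the obstacle; the lower bound goes through under plain OSC without the Schief--Wang equivalence.
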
 

\subsection{Separation conditions} \label{separation conditions}

The \emph{open set condition} (OSC) is satisfied if and only if there exist non-empty bounded open sets $\left(U_u\right)_{u\in V}\subset \left(\mathbb{R}^m\right)^n$ such that for each $u\in V$
\begin{gather*}
S_e\left(U_{t(e)}\right) \subset U_u   \textrm{ for all } e \in E_u^1, \\
\textrm{ and }S_e\left(U_{t(e)}\right)\cap S_f\left(U_{t(f)}\right) = \emptyset \textrm{ for all } e,f\in E_u^1,\textrm{ with } e \neq f.
\end{gather*}
See \cite{Book_Edgar2,Book_KJF2,Paper_Hutchinson}.

The \emph{strong open set condition} (SOSC) is satisfied if and only if the OSC is satisfied for non-empty bounded open sets $(U_u)_{u\in V}\subset \left(\mathbb{R}^{m}\right)^{n}$, where for each $u \in V$,
\begin{equation*}
F_u\cap U_u \neq \emptyset.
\end{equation*} 
When $(S_e)_{e \in E^1}$ are contracting similarities the SOSC is equivalent to the OSC, see \cite{Paper_Wang} for a proof for $n$-vertex IFSs and \cite{Paper_Schief} for standard IFSs. 

The \emph{strong separation condition} (SSC) is satisfied if and only if for each $u\in V$,
\begin{equation*}
S_e\left(F_{t(e)}\right)\cap S_f\left(F_{t(f)}\right) = \emptyset \textrm{ for all } e,f\in E_u^1,\textrm{ with } e \neq f.
\end{equation*}

We write $C(F_u)$ for the convex hull of $F_u$. 

The \emph{convex strong separation condition} (CSSC) is satisfied if and only if for each $u\in V$,
\begin{equation*}
S_e(C(F_{t(e)}))\cap S_f(C(F_{t(f)})) = \emptyset \textrm{ for all } e,f\in E_u^1,\textrm{ with } e \neq f.
\end{equation*}

\subsection{The $q$th packing moment, $Q_{e,f}^q\left(r\right)$, and the packing $L^q$-spectrum} \label{The $q$th packing moment}

Let $A\subset \mathbb{R}^m$ be a bounded subset of $\mathbb{R}^m$. For $r>0$, a subset $D\subset A$ is \emph{an $r$-separated subset of $A$} if $\left|x-y\right| > 2r$ for all $x,y \in D$ with $x \neq y$. This means that
\begin{equation*}
B\left(x,r\right)\cap B\left(y,r\right) = \emptyset
\end{equation*}
for all $x,y \in D$ with $x \neq y$. 
 
For $u \in V,\, q \in \mathbb{R},\, r > 0$, and $A \subset F_u$, we define the \emph{$q$th packing moment of $\mu_u$ on $A$ at scale $r$} as 
\begin{equation}
\label{$q$th packing moment}
M_u^q\left(A,r\right) = \sup \left\{ \sum_{x \in D} \mu_u\left(B(x,r)\right)^q :  D\ \textrm{is an}\ r\textrm{-separated subset of}\ A \right\}.
\end{equation}

Theorem \ref{theorem 2} gives information about a special type of $q$th packing moment that turns out to be important enough to give it its own notation $Q_{e,f}^q\left(r\right)$. (Strictly speaking we should write $Q_{u,e,f}^q\left(r\right)$ and $C_{u,e,f}$ in Theorem \ref{theorem 2} but we only use $Q_{e,f}^q\left(r\right)$ when the vertex $u$ is fixed and this should always be clear from the context). For edges $e,f \in E_u^1$, $f \neq e$, putting $A = S_e\left(F_{t(e)}\right)\cap S_f\left(F_{t(f)}\right)(r)$ in  Equation \eqref{$q$th packing moment} we obtain
\begin{equation}
\label{Q_{e,f}}
Q_{e,f}^q\left(r\right) = M_u^q\left(S_e(F_{t(e)})\cap S_f(F_{t(f)})(r), r\right)
\end{equation}
where $S_f\left(F_{t(f)}\right)(r)$ is the closed $r$-neighbourhood of  $S_f\left(F_{t(f)}\right)$.

The \emph{packing $L^q$-spectrum on $A\subset F_u$ of $\mu_u$} is defined by the following limit, if it exists,
\begin{equation}
\label{$L^q$-spectrum}
\lim_{\ r\to 0^+}\frac{\ln (M_u^q(A,r))} {-\ln r}.
\end{equation}
The packing $L^q$-spectrum is also called the \emph{multifractal $q$ box-dimension} and it can be regarded as a measure-theoretic generalisation of the box-counting dimension. In fact for the specific case with $q=0$ in Equation \eqref{$L^q$-spectrum}, that is what it is.

\subsection{$M_u^q\left(F_u,r\right)$ is Riemann integrable} \label{Riemann integrable}

The next lemma is used in the proof that $M_u^q\left(F_u,r\right)$ is Riemann integrable in Lemma \ref{MFvrcty}.

\begin{lem}
\label{mcty}
Let $\bigl(V,E^*,i,t,r,p,((\mathbb{R}^m,\left| \ \  \right|))_{u \in V},(S_e)_{e \in E^1}\bigr)$ be an $n$-vertex (OSC) IFS with probabilities
where $\left(F_u\right)_{u \in V}$ and $\left(\mu_u\right)_{u \in V}$ are as given in Equations \eqref{Invariance 1} and \eqref{Invariance 2}. Let $x \in \mathbb{R}^m$ and $r > 0$ with $C(x,r) = B(x,r)\setminus S(x,r)$ where $B(x,r)$ and $S(x,r)$ are the closed and open ball in $\mathbb{R}^m$. Then for each $u \in V$

\textup{(a)} $F_u$ is uncountable with no isolated points,

\textup{(b)} $\mu_u\left(C(x,r)\right) = 0$,

\textup{(c)} $\mu_u\left(B(x,r)\right)$ is a continuous function of $x$ and $r$. 

\end{lem}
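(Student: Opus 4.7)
The plan is to show $F_u$ is a nonempty perfect compact set. Each component satisfies $|F_v|>0$, since the assumption that every vertex has at least two outgoing edges forbids singleton attractor components. Fix $x \in F_u$ with an address $\mathbf{e}=e_1 e_2\cdots \in E_u^{\mathbb{N}}$ projecting to $x$. The cylinder $S_{e_1\cdots e_k}(F_{t(e_k)})$ contains $x$, has diameter $r_{e_1\cdots e_k}|F_{t(e_k)}|\to 0$, and contains at least one point besides $x$ since $|F_{t(e_k)}|>0$. Hence $x$ is not isolated in $F_u$, and a nonempty perfect compact subset of $\mathbb{R}^m$ is uncountable.

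\textbf{Part (b).} I would argue by contradiction. Set $J = \max_{v \in V}\sup_{x, r > 0}\mu_v(C(x,r))$ and suppose $J > 0$. For each $\epsilon > 0$, pigeonhole on the finite vertex set furnishes a fixed $v_0$ and a pair $(x_\epsilon, r_\epsilon)$ with $\mu_{v_0}(C(x_\epsilon, r_\epsilon)) > J - \epsilon$. Iterating Equation \eqref{Invariance 2} to depth $k$,
\begin{equation*}
\mu_{v_0}(C(x_\epsilon, r_\epsilon)) \;=\; \sum_{\mathbf{e} \in E_{v_0}^k} p_{\mathbf{e}}\, \mu_{t(\mathbf{e})}\!\left(C(S_{\mathbf{e}}^{-1}(x_\epsilon),\, r_\epsilon/r_{\mathbf{e}})\right),
\end{equation*}
each summand is at most $p_{\mathbf{e}} J$ while the sum exceeds $J-\epsilon$, so for $\epsilon$ small relative to $\min_{\mathbf{e}\in E_{v_0}^k}p_{\mathbf{e}}$ every summand is positive. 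Each iterated sphere therefore meets $F_{t(\mathbf{e})}$, yielding
\begin{equation*}
\dist(x_\epsilon, S_{\mathbf{e}}(F_{t(\mathbf{e})})) \;\leq\; r_\epsilon \;\leq\; \dist(x_\epsilon, S_{\mathbf{e}}(F_{t(\mathbf{e})})) + |S_{\mathbf{e}}(F_{t(\mathbf{e})})|.
\end{equation*}
As $k\to\infty$ the cylinder diameters vanish while the cylinders cover $F_{v_0}$, so in a limit $(x_*, r_*)$ every $y \in F_{v_0}$ satisfies $|y - x_*| = r_*$: $F_{v_0}$ lies in a sphere (or, if the maximizers escape to infinity, a hyperplane). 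But an attractor of contracting similarities with $|F_{v_0}|>0$ cannot lie in such a codimension-one surface $\Sigma$: applying any $S_e$ maps $\Sigma$ to a strictly smaller distinct copy $S_e(\Sigma)$, forcing $S_e(F_{t(e)}) \subset F_{v_0}$ into the codimension-two intersection $\Sigma \cap S_e(\Sigma)$; pulling back via $S_e^{-1}$ and iterating, $F_{v_0}$ is squeezed into an intersection of arbitrarily many distinct spheres, eventually a singleton, contradicting $|F_{v_0}|>0$. Therefore $J=0$.

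\textbf{Part (c) and main obstacle.} Given (b), joint continuity of $\mu_u(B(x,r))$ follows from the symmetric-difference bound
\begin{equation*}
B(x', r') \triangle B(x, r) \;\subset\; B(x,r+\delta) \setminus S(x, r-\delta), \qquad \delta = |x-x'| + |r-r'|,
\end{equation*}
giving $|\mu_u(B(x',r')) - \mu_u(B(x,r))| \leq \mu_u(B(x,r+\delta)\setminus S(x,r-\delta)) \to \mu_u(C(x,r))=0$ by continuity of $\mu_u$ from above and (b). The main obstacle is the unbounded-maximizer sub-case of (b): if $|x_\epsilon|$ or $r_\epsilon$ tends to infinity, the limiting enclosing surface is a hyperplane rather than a sphere, and ruling out $F_v \subset$ hyperplane requires the parallel argument with $S_{\mathbf{e}}$-images of hyperplanes, using strong connectivity to propagate the hyperplane constraint to every vertex before dimension-reduction concludes the proof.
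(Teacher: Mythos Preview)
Your argument for (a) is correct and essentially equivalent to the paper's; you argue perfectness directly via cylinders, whereas the paper invokes the positive finite $\mathcal{H}^s$-measure bound from Theorem~\ref{dimension} to rule out countability, but both rest on the same cylinder structure.

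For (c), your symmetric-difference annulus estimate is correct and is in fact tidier than the paper's argument, which fixes $r$ and sandwiches $B(x_n,r_0)$ between explicit monotone sequences of balls $D_n\subset B(x_n,r_0)\subset E_n$ before invoking continuity of measure.

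For (b) the two approaches genuinely diverge. The paper works directly with the SOSC open sets $U_v$: using $\mu_u(F_u\setminus U_u)=0$ (established later in Lemma~\ref{OSCe}) and the fact that $C(x,r)\cap F_u$ has empty interior in $F_u$, it shows there is a \emph{fixed} level $j$ such that every sphere misses at least one open cylinder $\mathcal{U}_{\mathbf{f}}$, $\mathbf{f}\in E_u^j$; iterating this uniformly yields the explicit decay $\mu_u(C(x,r))\le(1-p_{\min}^j)^k\to 0$. Your extremal/supremum argument is a different and legitimate route to the same underlying geometric fact (no $F_v$ sits inside a sphere), avoiding the OSC bookkeeping in favour of a soft compactness argument.

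However, your final reduction in (b) has a genuine gap. When you write $S_e(F_{t(e)})\subset\Sigma\cap S_e(\Sigma)$ for $e\in E_{v_0}^1$, the second containment needs $F_{t(e)}\subset\Sigma$, which you have not shown; you must take $\mathbf{e}$ to be a \emph{cycle} at $v_0$ (available by strong connectivity) so that $t(\mathbf{e})=v_0$. More seriously, the claim that an intersection of arbitrarily many distinct spheres is ``eventually a singleton'' is false: a coaxial pencil of spheres in $\mathbb{R}^m$ can share a common $(m-2)$-sphere. The correct conclusion from $S_{\mathbf{e}}(F_{v_0})\subset\Sigma\cap S_{\mathbf{e}}(\Sigma)$ is that this intersection (when nonempty and not a point) lies in the radical hyperplane of the two spheres, so $F_{v_0}$ lies in an affine hyperplane; by strong connectivity every $F_w$ then lies in an affine hyperplane, contradicting the standing hypothesis that $m$ is minimal for the ambient space. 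The paper makes this minimality assumption explicit in its proof and you should too; once you do, the hyperplane subcase you flag as the ``main obstacle'' is absorbed into the same affine-hull contradiction and is not a separate difficulty.
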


\begin{proof}(a) If $F_u$ is countable then $s= \dimH F_u = 0$ and as $0 < \mathcal{H}^0 \left(F_u\right) < +\infty$ by Theorem \ref{dimension}, $F_u$ must be finite, which it isn't as can be seen by repeated iteration of Equation \eqref{Invariance 1}. That every point of $F_u$ is a limit point of $F_u$ follows by Equation \eqref{phi_u}.

(b) It is convenient to use the following notation where $\be \in E_u^*$ and $\left(U_u\right)_{u\in V}$ are the open sets of the OSC as defined in Subsection \ref{separation conditions}. Let 
\begin{gather*}
\mathcal{U}_\be = S_{\be}(F_{t(\be)}) \cap S_{\be}(U_{t(\be)}) \textrm{ and } \mathcal{U}_\be^\prime = S_{\be}(F_{t(\be)}) \setminus S_{\be}(U_{t(\be)})
\end{gather*} 
then iterating Equation \eqref{Invariance 1} $k$ times we obtain
\begin{equation}
\label{Ue}
F_u =  \bigcup_{ \substack{\be\in E_u^k} }S_{\be}(F_{t(\be)}) = \bigcup_{ \substack{\be\in E_u^k} }\left(\mathcal{U}_\be \cup \mathcal{U}_\be^\prime \right).
\end{equation}

In Statements (i)-(iv), $\be \in E_u^*$ is any finite path. 

(i) \emph{$\mu_u\left(F_u \setminus U_u\right)=0$ and $\mu_u\left(\mathcal{U}_\be^\prime \right)=0$.} 

We show in the proof of Lemma \ref{OSCe} that $\mu_u\left(F_u \setminus U_u\right)=0$ for any $u \in V$ and this implies $\mu_u\left(\mathcal{U}_\be^\prime \right)=0$.

(ii) \emph{$\mu_u\left(F_u \cap U_u\right)=1$ and $\mu_u\left(\mathcal{U}_\be\right)=p_\be$.} 

This is implied by (i) and Lemma \ref{OSCe}.

(iii) \emph{$\left(F_u \cap U_u\right) \not\subset \left(C(x,r) \cap F_u\right)$ and $\mathcal{U}_\be \not\subset \left(C(x,r) \cap F_u\right)$.} 

This holds because  $\left(F_u \cap U_u\right)$ and $\mathcal{U}_\be$ are open in $F_u$ but $C(x,r) \cap F_u$ is closed with empty interior in $F_u$. Here we assume that $m$ is a minimum for the parent space $\mathbb{R}^m$, so for example we assume the Cantor set is constructed in $\mathbb{R}$ and not $\mathbb{R}^2$. (For $m\geq2$ this follows geometrically because contracting similarities increase curvature but the curvature is constant on $C(x,r)$).

(iv) \emph{We may choose a fixed $j$ so that for any $C(x,r)$ there is always at least one path $\Bf \in E_u^j$ such that $C(x,r) \cap \mathcal{U}_\Bf = \emptyset$. In general there is a fixed $j$ so that for any $C(x,r)$ and any $\mathcal{U}_\be$ there is always at least one path $\Bf \in E_{t(\be)}^j$ such that $C(x,r) \cap \mathcal{U}_{\be\Bf} = \emptyset$.} 

The first sentence follows by (iii) and and the fact that the OSC ensures that the union $\bigcup_{ \substack{\be \in E_u^k} }\mathcal{U}_{\be}$ is disjoint  and decreases component-wise as $k$ increases. Basically $j$ can be chosen large enough to ensure that no $C(x,r)$ can intersect all of the sets $\left\{\mathcal{U}_\be : \be \in E_u^j\right\}$. The general statement follows from this using the self-similarity of the attractor. 

Statements (i) and (ii) mean we can regard the measure $\mu_u$ as a mass distribution as described in \cite[Section 1.3] {Book_KJF2} with the mass $\mu_u\left(F_u \cap U_u\right)=1$ on $F_u \cap U_u$ repeatedly subdivided component-wise across the sets $\left\{\mathcal{U}_\be : \be \in E_u^k\right\}$ as $k$ increases. Remembering $\supp \mu_u = F_u$ and using Equation \eqref{Ue} along with (i)-(iv), with $j$ as chosen in (iv), we obtain
\begin{gather*}
\mu_u\left(C(x,r)\right) \leq \sum_{ \substack{\be\in E_u^j \\ C(x,r) \cap \mathcal{U}_\be \neq \emptyset} }\mu_u\left(\mathcal{U}_\be \right) = \sum_{ \substack{\be\in E_u^j \\ C(x,r) \cap \mathcal{U}_\be \neq \emptyset} }p_\be \\
\leq \biggl( \  \sum_{ \substack{\be\in E_u^j } }p_\be \ \biggr) - p_\Bf \leq 1 - p_{\min}^j
\end{gather*}
where $p_{\min}= \min\left\{p_e  :  e \in E^1 \right\} $ and the path $\Bf \in E_u^j$ is as given in (iv). Repeating this argument
\begin{gather*}
\mu_u\left(C(x,r)\right) \leq \sum_{ \substack{\be\in E_u^{2j} \\ C(x,r) \cap  \mathcal{U}_\be \neq \emptyset} }\mu_u\left(\mathcal{U}_\be \right) = \sum_{ \substack{\be\in E_u^{2j} \\ C(x,r) \cap \mathcal{U}_\be \neq \emptyset} }p_\be \\
= \sum_{ \substack{\bs\in E_u^{j} \\ C(x,r) \cap \mathcal{U}_\bs \neq \emptyset} }p_\bs \Biggl( \ \sum_{ \substack{\bt\in E_{t(\bs)}^{j} \\ C(x,r) \cap \mathcal{U}_{\bs \bt} \neq \emptyset} }p_\bt \ \Biggr) \leq \sum_{ \substack{\bs\in E_u^{j} \\ C(x,r) \cap \mathcal{U}_\bs \neq \emptyset} }p_\bs \Biggl( \ \biggl( \ \sum_{ \substack{\bt\in E_{t(\bs)}^{j} } }p_\bt \ \biggr) - p_{\min}^j \ \Biggr) \\ 
= \sum_{ \substack{\bs\in E_u^{j} \\ C(x,r) \cap \mathcal{U}_\bs \neq \emptyset} }p_\bs \left( 1 - p_{\min}^j \right) \leq \left( 1 - p_{\min}^j \right)^2
\end{gather*}
It follows by induction that $\mu_u\left(C(x,r)\right) \leq \left( 1 - p_{\min}^j \right)^k$ for all $k$ which implies $\mu_u\left(C(x,r)\right) =0$.

\begin{figure}[!htb]
\begin{center}
%trim=left bottom right top (also need clip).
\includegraphics[trim = 28mm 185mm 28mm 20mm, clip, scale=0.7]{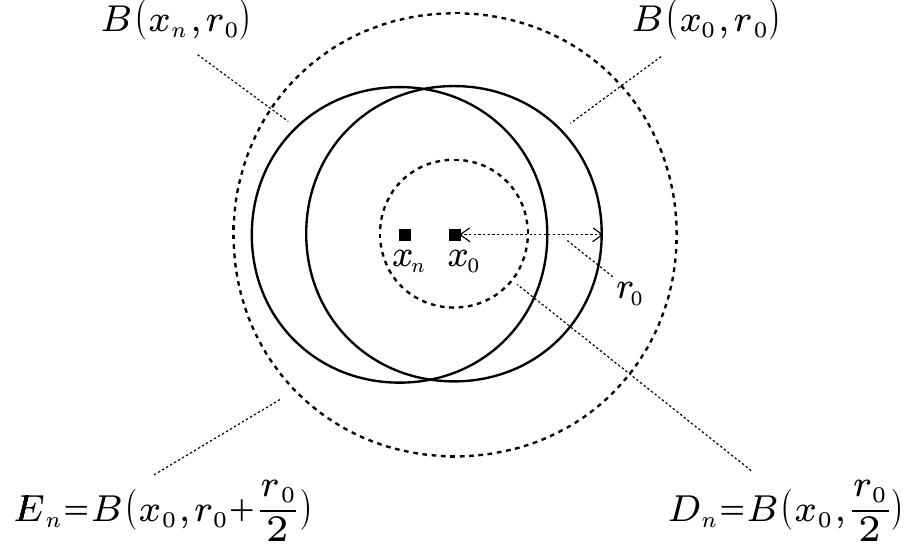}%pic can be png, jpg or pdf for pdfLaTeX - not eps.
\end{center}
\caption{A schematic representation in $\mathbb{R}^2$ of the closed balls $D_n$ and $E_n$ for $N_1\leq n < N_2$.}
\label{mcontinuity}
\end{figure}

(c)  We prove this for $x \in \mathbb{R}^m$ keeping $r= r_0$ fixed, the proof in the general situation can be constructed in a similar way. We aim to show $\mu_u\left(B(x,r_0)\right)$ is a continuous function of $x$. Let $x_0$ be chosen then by part (b) 
 \begin{equation}
\label{mSB}
\mu_u\left(S(x_0,r_0)\right) = \mu_u\left(B(x_0,r_0)\right).
\end{equation} 
 Let $\left(x_n\right)$ be a sequence that converges to $x_0$ and let $\left(N_k\right)$ be a strictly increasing sequence such that $\left|x_n - x_0\right| < r_0/2^k$ for all $n \geq N_k$. For $N_k \leq n < N_{k+1}$ let 
\begin{gather*}
D_n = B\biggl(x_0, \sum_{i=1}^k\frac{r_0}{2^i}\biggr) \textrm{ and } E_n = B\left(x_0,r_0 + \frac{r_0}{2^k}\right)
\end{gather*} 
so that $\left(D_n\right)$ and $\left(E_n\right)$ are respectively increasing and decreasing sequences of closed balls, as illustrated in Figure \ref{mcontinuity} for $\mathbb{R}^2$,  such that  
\begin{equation}
\label{DnBEn}
D_n \subset B\left(x_n,r_0\right)  \subset E_n.
\end{equation}
Applying the continuity of measures, see for example  \cite[Proposition 1.6]{Book_KJF1}, using Equation \eqref{mSB} and the fact that $\mu_u\left(E_{N_1}\right) < +\infty$, we obtain
\begin{gather*}
\lim_{n \rightarrow \infty}\mu_u\left(D_n\right) = \mu_u\left(\bigcup_{n=N_1}^\infty D_n\right) = \mu_u\left(S\left(x_0,r_0\right)\right)   \\ = \mu_u\left(B\left(x_0,r_0\right)\right) = \mu_u\left(\bigcap_{n=N_1}^\infty E_n\right) = \lim_{n \rightarrow \infty}\mu_u\left(E_n\right)
\end{gather*}
which, taken together with \eqref{DnBEn}, implies
\begin{gather*}
\lim_{n \rightarrow \infty}\mu_u\left(B(x_n,r_0)\right) = \mu_u\left(B(x_0,r_0)\right).
\end{gather*}
\end{proof}
\begin{lem}
\label{ctyae}
Let $f:[\alpha, \beta)\rightarrow \mathbb{R}$ be continuous from the right and let $A\subset [\alpha, \beta)$ be the set of points at which f is discontinuous. Then $A$ is countable.
\end{lem}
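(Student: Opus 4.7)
The plan is to decompose $A$ as a countable union of sets, each of which is uniformly separated in $[\alpha,\beta)$ and therefore at most countable. The driving idea is that since $f$ is right-continuous, any discontinuity at a point $x \in A$ is caused purely by behaviour strictly to the left of $x$; immediately to the right of $x$ the function still varies very little. Two discontinuities cannot sit arbitrarily close to each other, because one would then lie inside the right-continuity neighbourhood of the other, and the large oscillation at one point would be incompatible with the tight control at the other.

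Concretely, at each $x \in A$ I would extract a pair of positive integers $(N_x, M_x)$: the integer $N_x$ quantifies the discontinuity (every neighbourhood of $x$ contains some $y$ with $|f(y) - f(x)| > 1/N_x$), and $M_x$ quantifies the right-continuity (for every $t \in [x, x + 1/M_x)$ one has $|f(t) - f(x)| < 1/(3N_x)$). Setting $A_{N,M} = \{ x \in A : N_x = N,\ M_x = M \}$ yields a decomposition $A = \bigcup_{N,M \in \mathbb{N}} A_{N,M}$ indexed by a countable set.

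The core step, and the only place where care is needed, is to show that $A_{N,M}$ is $(1/M)$-separated. Suppose for contradiction that $x_1 < x_2$ both lie in $A_{N,M}$ with $x_2 - x_1 < 1/M$. Applying the discontinuity property at $x_2$ with $\delta = x_2 - x_1$ produces a point $y$ with $|y - x_2| < \delta$ and $|f(y) - f(x_2)| > 1/N$; right-continuity at $x_2$ forces $y < x_2$, so $y \in (x_1, x_2) \subset [x_1, x_1 + 1/M)$. Right-continuity at $x_1$ then gives $|f(y) - f(x_1)| < 1/(3N)$ and $|f(x_2) - f(x_1)| < 1/(3N)$, so the triangle inequality yields $|f(y) - f(x_2)| < 2/(3N) < 1/N$, contradicting the choice of $y$.

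Since any $(1/M)$-separated subset of $[\alpha, \beta)$ is at most countable, each $A_{N,M}$ is countable and hence so is $A$. The main obstacle is simply the bookkeeping that ties the right-continuity tolerance to the discontinuity gap: the factor $1/(3N)$ must be chosen strictly smaller than half of $1/N$ so that the two triangle-inequality terms close up, but any factor less than $1/2$ works equally well.
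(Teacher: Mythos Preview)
Your proof is correct and rests on the same key observation as the paper's: right-continuity provides, to the right of each discontinuity point, a small interval on which $f$ oscillates too little to accommodate another discontinuity of comparable size. The paper stratifies $A$ by a single index $k$ (jump at least $1/2^k$), then for each $a$ in the stratum picks a rational in the right-neighbourhood $(a,a+\delta)$ on which $f$ varies by less than $1/2^{k+1}$, and checks that this assignment $A(1/2^k)\to\mathbb{Q}$ is injective; you instead stratify by the pair $(N,M)$ and show each piece is $(1/M)$-separated outright. Your double index is mildly redundant but makes the separation completely explicit, while the paper's injection-into-$\mathbb{Q}$ device hides the same separation inside the choice of rational; otherwise the two arguments are essentially identical.
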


\begin{proof}
Let $A(1/2^k)\subset A$ be the set of points where the discontinuity is at least $1/2^k$, so for $a \in A(1/2^k)$ there exists a sequence $(x_n)$ with $x_n \rightarrow a^-$ such that 
\begin{equation}
\label{rtcts1}
\left|f(x_n) - f(a)\right| \geq 1/2^k \textrm{ for all } n.
\end{equation} 
As $f$ is continuous from the right at $a$ there exists $\delta(a,1/2^{k+1}) > 0$ such that $\left|f(x) - f(a)\right| < 1/2^{k+1}$ for all $x \in (a,a +  \delta(a,1/2^{k+1}))$. It follows that  
\begin{equation}
\label{rtcts2}
\left|f(x) - f(y)\right| < 1/2^k \textrm{ for all } x,y \in (a,a +  \delta(a,1/2^{k+1})).
\end{equation} 
For $a \in A(1/2^k)$ let $g(a)$ be a chosen rational number in the interval $(a,a +  \delta(a,1/2^{k+1}))$, then this defines a map $g: A(1/2^k)\rightarrow \mathbb{Q}$. To show $g$ is an injection we assume for a contradiction that there exist $a,b \in A(1/2^k)$ with $a \neq b$ and $g(a)=g(b)$. This means $(a,a +  \delta(a,1/2^{k+1})) \cap (b,b + \delta(b,1/2^{k+1})) \neq \emptyset$ and without loss of generality we may assume $b < a$. Further we may choose $n$ large enough so that $b < x_n < a < b + \delta(b,1/2^{k+1})$. It follows by \eqref{rtcts1} that $\left|f(x_n) - f(a)\right| \geq 1/2^k$ and by \eqref{rtcts2} that $\left|f(x_n) - f(a)\right| < 1/2^k$.
This contradiction implies $g$ is an injection and $A(1/2^k)$ is at most countable. As $A=\bigcup_{k=1}^\infty A(1/2^k)$, $A$ is countable. 
\end{proof}

For $q=0$, $M_u^0\left(F_u, r\right)$ is Lalley's packing function as described in \cite{Paper_Lalley} and is the maximum cardinality of any $r$-separated subset of $F_u$.  $M_u^0\left(F_u, r\right)$ is a decreasing integer-valued function which takes the value $1$ for all sufficiently large $r$ and is piecewise continuous with only finitely many discontinuities on any compact interval $[c,d]\subset (0,+\infty)$.

\begin{lem}
\label{MFvrcty}
Let $[\alpha, \beta) \subset (0,+\infty)$ be an interval where $M_u^0\left(F_u, r\right)$ is constant with $M_u^0\left(F_u, r\right) = N$ for all $r \in [\alpha, \beta)$ where $\alpha$ and $\beta$ are adjacent points of discontinuity of $M_u^0\left(F_u, r\right)$.

Then $M_u^q\left(F_u, r\right)$ is continuous from the right  and so Riemann integrable on $[\alpha, \beta)$.
\end{lem}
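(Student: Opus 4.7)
The plan is to establish right-continuity of $M_u^q(F_u,r)$ on $[\alpha,\beta)$ and then deduce Riemann integrability by combining this with Lemma \ref{ctyae} and a standard boundedness check. First, $M_u^q(F_u,r)$ is bounded on $[\alpha,\beta)$: any $r$-separated subset of $F_u$ has cardinality at most $N$ there, and $\mu_u(B(x,r))^q$ is uniformly bounded on $F_u\times[\alpha,\beta)$. For $q\geq 0$ this is immediate from $\mu_u(B(x,r))\leq 1$; for $q<0$ it follows from Lemma \ref{mcty}(c) together with $\supp\mu_u=F_u$ and compactness of $F_u$, which force $\mu_u(B(x,r))\geq c>0$ uniformly for $(x,r)\in F_u\times[\alpha,\beta)$, hence $\mu_u(B(x,r))^q\leq c^q$.

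For right-continuity at $r_0\in[\alpha,\beta)$, the lower bound is straightforward. Given any $r_0$-separated $D\subset F_u$, strict inequality in the definition of separation and finiteness of $D$ (we have $|D|\leq N$) yield $\delta>0$ such that $D$ is also $r$-separated for every $r\in[r_0,r_0+\delta]$. Lemma \ref{mcty}(c) and continuity of $t\mapsto t^q$ on any compact subinterval of $(0,1]$ give $\sum_{x\in D}\mu_u(B(x,r))^q\to\sum_{x\in D}\mu_u(B(x,r_0))^q$ as $r\downarrow r_0$, and taking the supremum over admissible $D$ produces $\liminf_{r\downarrow r_0}M_u^q(F_u,r)\geq M_u^q(F_u,r_0)$.

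The upper bound is where I expect the real obstacle. Take $r_n\downarrow r_0$ and near-optimal $r_n$-separated $D_n\subset F_u$ with $\sum_{x\in D_n}\mu_u(B(x,r_n))^q>M_u^q(F_u,r_n)-1/n$, noting $|D_n|\leq N$. The naive approach of extracting a limit configuration breaks down because limit pairs may sit at distance exactly $2r_0$, violating strict separation and preventing direct comparison with $M_u^q(F_u,r_0)$. My workaround is to exploit two facts simultaneously: since $r_n>r_0$, each $D_n$ is already $r_0$-separated, so $\sum_{x\in D_n}\mu_u(B(x,r_0))^q\leq M_u^q(F_u,r_0)$; and on the compact product $F_u\times[r_0,r_0+1]$ the map $(x,r)\mapsto\mu_u(B(x,r))^q$ is uniformly continuous, using Lemma \ref{mcty}(c) together with the lower bound $\mu_u(B(x,r))\geq c>0$ established above. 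Since $|D_n|\leq N$, this uniform continuity bounds $\bigl|\sum_{x\in D_n}\mu_u(B(x,r_n))^q-\sum_{x\in D_n}\mu_u(B(x,r_0))^q\bigr|$ by $N\varepsilon_n$ with $\varepsilon_n\to 0$, giving $\limsup_n M_u^q(F_u,r_n)\leq M_u^q(F_u,r_0)$.

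Combining the two bounds gives right-continuity on $[\alpha,\beta)$. Lemma \ref{ctyae} then produces a countable set of discontinuities, which together with the boundedness established at the outset suffices, via the standard Lebesgue criterion, to conclude Riemann integrability of $M_u^q(F_u,r)$ on every compact subinterval of $[\alpha,\beta)$.
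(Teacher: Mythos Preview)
Your proof is correct. Both your argument and the paper's rest on the same pivotal observation: since $r_n>r_0$, any $r_n$-separated set $D_n$ is automatically $r_0$-separated, so $\sum_{x\in D_n}\mu_u(B(x,r_0))^q\leq M_u^q(F_u,r_0)$. The difference is in packaging. The paper argues by contradiction, introducing the compact configuration space $\overline{D}_{[\alpha,\beta]}\subset\mathbb{R}^{mN+1}$, extracting a convergent subsequence $(\bx_n,r_n)\to(\bx,r_0)$, and using continuity of $f(\bx,r)=\sum_i\mu_u(B(x_i,r))^q$ on that space to derive a contradiction. You instead go directly via uniform continuity of $(x,r)\mapsto\mu_u(B(x,r))^q$ on $F_u\times[r_0,r_0+1]$, controlling the difference between the sums at $r_n$ and at $r_0$ term by term with the bound $|D_n|\leq N$. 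Your route is a little cleaner for the right-continuity statement on its own; the paper's configuration-space framework has the compensating advantage that it is immediately reused (just after the lemma) to discuss why left-continuity \emph{almost} holds, where one genuinely needs to analyse limit configurations that may fail strict separation.
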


\begin{proof} For convenience let $M(r) = M_u^q\left(F_u, r\right)$. To prove this we need some new notation, let
\begin{equation*}
D_{[\alpha, \beta)} =  \left\{  (\bx,r) : r \in [\alpha, \beta), \ \bx = (x_1,\ldots , x_N) \in F_u^N \textrm{ and } \left|x_i - x_j\right| > 2r \textrm{ for } i \neq j\right\}
\end{equation*}
so that $(\bx, \widetilde{r}) \in D_{[\alpha, \beta)}$ means $\bx = (x_1,\ldots , x_N)$ is an $ \widetilde{r}$-separated subset of $F_u$. 

Let $\overline{D}_{[\alpha, \beta]}$ be the closure of $D_{[\alpha, \beta)}$ which is given by
\begin{equation*}
\overline{D}_{[\alpha, \beta]} =  \left\{  (\bx, r) : r \in [\alpha, \beta], \ \bx = (x_1,\ldots , x_N) \in F_u^N \textrm{ and } \left|x_i - x_j\right| \geq 2r \textrm{ for } i \neq j\right\}
\end{equation*}
where $\overline{D}_{[\alpha, \beta]}$ is a compact subset of $(\mathbb{R}^m)^N \times \mathbb{R} = \mathbb{R}^{mN+1}$ and so is a complete metric space with respect to the Euclidean metric on $\mathbb{R}^{mN+1}$.

Let $f: \overline{D}_{[\alpha, \beta]}\rightarrow \mathbb{R}^+$ be defined  by
\begin{equation*}
f(\bx, r) = \sum_{i=1}^N \mu_u\left(B(x_i, r)\right)^q 
\end{equation*}
then $f$ is continuous on $\overline{D}_{[\alpha, \beta]}$ by Lemma \ref{mcty} (c). As $\supp\mu_u = F_u$ for $x_i \in F_u$ $\mu_u\left(B(x_i,r)\right) > 0$ from the definition of the support of a measure. 

From these definitions it follows that 
\begin{equation}
\label{Mr}
M(r) = \sup_{(\bx, r) \in D_{[\alpha, \beta)} } f(\bx, r)
\end{equation}
where the supremum is over $D_{[\alpha, \beta)}$ not $\overline{D}_{[\alpha, \beta]}$ (see Equation \eqref{$q$th packing moment}).

\begin{figure}[!htb]
\begin{center}
%trim=left bottom right top (also need clip).
\includegraphics[trim = 22mm 180mm 32mm 10mm, clip, scale=0.7]{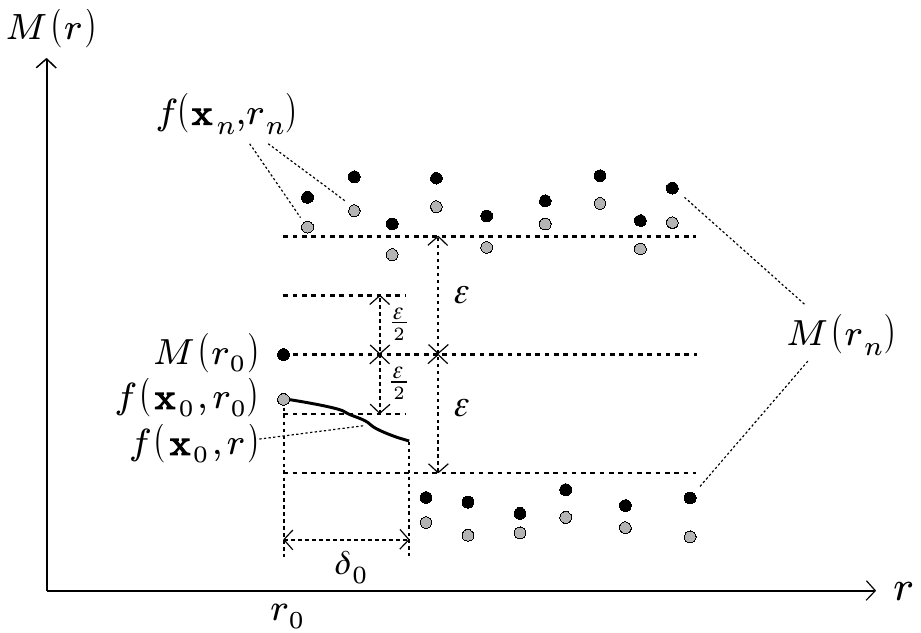}%pic can be png, jpg or pdf for pdfLaTeX - not eps.
\end{center}
\caption{A sequence $(r_n)$ with $r_n \rightarrow r_0^+$ used to show $M(r) = M_u^q\left(F_u, r\right)$ is continuous from the right.}
\label{Mrcts}
\end{figure}

For a contradiction we assume $M(r)$ is not continuous from the right. This means there exists $\varepsilon > 0$ and a sequence $(r_n)$ with $r_n\rightarrow r_0^+$ for some $r_0 \in [\alpha, \beta)$ such that $\left|M(r_n) - M(r_0)\right| \geq \varepsilon$ for all $n$. As illustrated in Figure \ref{Mrcts}, using Equation \eqref{Mr}, we can choose $(\bx_n, r_n) \in D_{[\alpha, \beta)}$ such that $M(r_n) - f(\bx_n, r_n) < \varepsilon/2$ and in particular at $M(r_0)$ we can choose $(\bx_0, r_0) \in D_{[\alpha, \beta)}$ so that $M(r_0) - f(\bx_0, r_0) < \varepsilon/2$.

Using the continuity of $f$ at $(\bx_0, r_0)$ and the fact that the closed balls in an $r$-separated packing always have a positive minimum distance between them, we can increase the radii of the balls centred at $\bx_0$ by some $\delta_0 > 0$ whilst maintaining $\left|M(r_0) - f(\bx_0, r)\right| < \varepsilon$. This is drawn for $q<0$ in Figure \ref{Mrcts} with $f(\bx_0, r)$ decreasing as a function of $r$, $f(\bx_0, r)$ increases for $q>0$. By Equation \eqref{Mr} this implies $M(r_n) - M(r_0) \geq \varepsilon$ for all $r_n$ with $r_n - r_0 \leq \delta_0$. Excluding those $r_n$ for which $r_n - r_0 > \delta_0$ and relabelling from now on we may assume 
\begin{equation}
\label{fMr0}
f(\bx_n, r_n) - M(r_0) \geq \frac{\varepsilon}{2}
\end{equation}
for all $n$, as shown in Figure \ref{Mrcts}.

As $\left((\bx_n, r_n)\right)$ is a sequence in $D_{[\alpha, \beta)}\subset \overline{D}_{[\alpha, \beta]}$ and $\overline{D}_{[\alpha, \beta]}$ is a compact metric space it has a convergent subsequence, see \cite[Chapter 2, Theorem 21]{Book_Maddox}. Relabelling again, we may now take $\left((\bx_n, r_n)\right)$ to be a sequence in $D_{[\alpha, \beta)}$ which converges to $(\bx, r_0) \in \overline{D}_{[\alpha, \beta]}$ where $\bx$ may or may not be an $r_0$-separated subset. Consider the sequence $\left((\bx_n, r_0)\right)$ which also converges to $(\bx, r_0)$ where each $\bx_n$ is an $r_0$-separated subset since $r_0 < r_n$. Because $f(\bx_n, r_0) \leq M(r_0)$ for each $n$, it follows that $f(\bx, r_0) \leq M(r_0)$. Using Inequality \eqref{fMr0} we obtain
\begin{equation*}
f(\bx_n, r_n) - f(\bx, r_0) \geq \frac{\varepsilon}{2}
\end{equation*}
for all $n$, which contradicts the continuity of $f$ on $\overline{D}_{[\alpha, \beta]}$. This proves $M(r)$ is continuous from the right.

By Lemma \ref{ctyae}, $M(r)$ is continuous on $[\alpha, \beta)$ except for at most countably many points and this is enough to ensure it is Riemann integrable on $[\alpha, \beta)$, see \cite[Theorem 5.9]{Book_Taylor}.
\end{proof}

In fact the proof of Lemma \ref{MFvrcty} can be used to ``almost'' prove the following:

\medskip

\emph{$M_u^q\left(F_u,r\right)$ is piecewise continuous with only finitely many discontinuities on any compact interval $[c,d]\subset (0,+\infty)$ and the points of discontinuity are those of the packing function $M_u^0\left(F_u,r\right) $.}

\medskip

To prove this it is enough to show $M(r)$ is continuous from the left on $(\alpha, \beta)$. A proof by contradiction goes through in the same way as the proof of  Lemma \ref{MFvrcty} except for the very last step, so we can construct a sequence $\left((\bx_n, r_n)\right)$  in $D_{(\alpha, \beta)} \subset D_{[\alpha, \beta)}$ which converges to $(\bx, r_0) \in \overline{D}_{[\alpha, \beta]}$ with $r_n\rightarrow r_0^{-}$ and for which 
\begin{equation*}
f(\bx_n, r_n) - M(r_0) \geq \frac{\varepsilon}{2}
\end{equation*}
for all $n$.
To complete the proof we need to show $f(\bx, r_0) \leq M(r_0)$ where $\bx$ may not be $r_0$-separated with $\left|x_i - x_j\right| = 2r_0$ for some $i \neq j$. We can't use the same method for this as the one used in the proof of Lemma \ref{MFvrcty} because now $r_n < r_0$, however as $F_u$ is uncountable with no isolated points we should be able to move the centres of the balls in $\bx$ by as small a distance as we like to create an $r_0$-separated packing $(\widetilde{\bx},r_0) \in D_{(\alpha, \beta)}$ with $\left|f(\bx, r_0)-f(\widetilde{\bx},r_0)\right| < \varepsilon^\prime$ for any $\varepsilon^\prime > 0$ by the continuity of $f$, so that $f(\bx, r_0)< f(\widetilde{\bx},r_0) + \varepsilon^\prime \leq M(r_0) + \varepsilon^\prime$. This would imply $f(\bx, r_0) \leq M(r_0)$ as required. All that is needed then is a formal proof that if we can't change $(\bx, r_0)$ into $(\widetilde{\bx},r_0)$ in this continuous way then $r_0$ must be a point of discontinuity of the packing function $M_u^0\left(F_u,r\right) $, where there is a cardinality drop in the maximum number of balls in any $r$-separated packing.

\subsection{Directly Riemann integrable functions} \label{Directly Riemann integrable functions}
 
Let $z\left(t\right)$ be a function, $z : [0, \infty) \to \mathbb{R}$. For a fixed $h > 0$ let $m_n$ and $M_n$ denote the infimum and supremum respectively of $z\left(t\right)$ in the interval $[(n-1)h, nh]$. The function $z\left(t\right)$ is \emph{directly Riemann integrable} whenever the sums $h\sum_{n=1}^{\infty}m_n$ and $h\sum_{n=1}^{\infty}M_n$ converge absolutely to the same limit $I$ as $h \to 0^+$, in which case $I = \int z\left(t\right)dt$, see \cite{Book_Feller}.

The following sufficient condition for direct Riemann integrability is used in \cite{Paper_Lalley, Paper_Olsen_1}. 

If $z\left(t\right)$ is Riemann integrable on all compact subintervals of $[0, \infty)$ and there exist $c_1, c_2 > 0$ such that
\begin{equation}
\label{directly_Riemann_integrable}
\left|z\left( t \right)\right|  \leq  c_1e^{-c_2t}
\end{equation}
for all $t \in [0, \infty)$, then $z\left(t\right)$ is directly Riemann integrable.

\subsection{The spectral radius of a non-negative matrix} \label{spectral radius} 

Let $\bv=\left(v_1,  v_2,  \ldots ,  v_n\right)^\transpose$ and $\bw=\left(w_1,  w_2,  \ldots ,  w_n\right)^\transpose$ be two real $n$-dimensional (column) vectors. We define $\bv \leq \bw$ as follows 
\begin{equation*}
\bv \leq \bw \textup{ if and only if } v_i \leq w_i \textup{ for all } 1 \leq i \leq n,
\end{equation*}
and similarly 
\begin{equation*}
\bv < \bw \textup{ if and only if } v_i < w_i \textup{ for all } 1 \leq i \leq n,
\end{equation*}
We denote the zero vector as $\mathbf{0}=(0,  0,  \ldots ,  0)^\transpose$. A vector $\bv$ is a \emph{positive vector} if $\mathbf{0}<\bv$ and is a \emph{non-negative vector} if $\mathbf{0}\leq \bv$. Using $M_{ij}$ for the $ij$th entry of a matrix $\bM$, these definitions  can be extended to the set of real $n \times n$ matrices in the obvious way. A \emph{non-negative matrix} $\bM$, has $0 \leq M_{ij}$ for all $1 \leq i,j \leq n$. For two non-negative matrices $\bC$, $\bD$, $\bC\leq \bD$ means $C_{ij} \leq D_{ij}$ for all $1 \leq i,j \leq n$ and $\bC= \bD$ means $C_{ij} = D_{ij}$ for all $1 \leq i,j \leq n$. 
 
A non-negative $n \times n$ matrix $\bM$ is \emph{irreducible} if, for each $1 \leq i,j \leq n$, there exists $k=k(i,j)\in \mathbb{N}$, which may depend on $i$ and $j$, such that the $ij$th entry of $\bM^k$ is positive, that is  
\begin{equation*}
M_{ij}^k>0.
\end{equation*}
  
The Perron-Frobenius Theorem \cite[Theorem 1.1]{Book_Seneta}, ensures that if $\bM$ is a non-negative irreducible matrix then 
\begin{itemize}
\item  $\bM$ has a real eigenvalue $r>0$, such that $r\geq \left|\lambda\right|$ for any eigenvalue $\lambda \neq r$. 	  
\item  $r$ has strictly positive left and right eigenvectors, which are unique up to a scaling factor.
\end{itemize}
Let $\rho\left(\bM\right)=r$, then $\rho\left(\bM\right)$ is the \emph{spectral radius of the matrix} $\bM$, with

\begin{equation}
\label{spectral radius eqn}
\rho\left(\bM \right) = \max \left\{ \left|\lambda \right|:  \lambda \textrm{ is an eigenvalue of } \bM\right\}.
\end{equation} 

\subsection{The matrix $\bA\left(q, \beta\right)$} \label{matrix A}

The $n \times n$ non-negative matrix, $\bA\left(q, \beta\right) = (A_{uv})$, where $n=\#V$ is the number of vertices in the directed graph, has entries defined by 
%\begin{equation*}
%A_{uv}\left(q,\beta\right) = \sum_{e\in E_{uv}^1} p_e^qr_e^{\beta\left(q\right)}.  
%\end{equation*}
\begin{equation*}
A_{uv}\left(q,\beta\right) = 
\begin{cases}  \sum_{e\in E_{uv}^1} p_e^qr_e^{\beta\left(q\right)} & \  \text{if $E_{uv}^1 \neq \emptyset$,}
\\
0 &  \  \text{if $E_{uv}^1 = \emptyset$.}
\end{cases}
\end{equation*}
Because the directed graph is strongly-connected $\mathbf{A}\left(q, \beta\right)$ will be irreducible for $q, \beta\left(q\right) \in \mathbb{R}$. 
It can be shown, using the Perron-Frobenius Theorem, see \cite{Paper_Edgar_Mauldin}, that for a given $q \in \mathbb{R}$ there exists a unique value of $\beta\left(q\right)$ such that $\rho\left(\mathbf{A}\left(q, \beta\right)\right) = 1$, and this defines the function, $\beta : \mathbb{R} \to \mathbb{R}$, implicitly as a function of $q$. We require $\rho\left(\mathbf{A}\left(q, \beta\right)\right) = 1$ in the application of the Renewal Theorem in Subsection \ref{A system of renewal equations}.

We will also use the notation $\bA\left(q, \beta, l\right)$ for the $l$th power of the matrix $\bA\left(q, \beta\right)$, so
\begin{equation*}
\bA\left(q, \beta, l)\right)= \left(\bA\left(q, \beta\right)\right)^l, 
\end{equation*}
where the $uv$th entry is
\begin{equation}
\label{Auv(q,beta,l)}
A_{uv}\left(q, \beta, l\right)=\sum_{\substack{\be \in E_{uv}^l }}p_{\be}^q r_\be^{\beta\left(q\right)}, 
\end{equation}
if $E_{uv}^l \neq \emptyset$ and is zero otherwise.

\subsection{The matrix $\bB\left(q, \gamma, l\right)$} \label{matrix B}
 
The $n \times n$ non-negative matrix $\bB\left(q, \gamma, l\right)$ is very closely related to the matrix $\bA\left(q, \beta, l\right)$. The $n$-vertex IFSs of Theorems  \ref{theorem 1} and  \ref{theorem 2}, are such that the OSC holds and the maps $(S_e)_{e \in E^1}$ are contracting similarities, so the OSC is equivalent to the SOSC (see Subsection \ref{separation conditions}). This means we may take $(U_u)_{u \in V}\subset \left(\mathbb{R}^m\right)^{n}$ to be a list of non-empty bounded open sets where for each vertex $u \in V$, we may choose a point $x_u \in F_u\cap U_u$ and a radius $\varrho_u>0$ such that $B(x_u, \varrho_u)\subset U_u$. Let $\varrho_{\textup{min}}=\min\left\{\varrho_u: u \in V\right\}$, then $B(x_u, \varrho_\textup{min})\subset U_u$ for all $u\in V$. 

\begin{lem}
\label{surjection phi_u}  
Let $(F_u)_{u \in V} \in \left(K\left(\mathbb{R}^{m}\right)\right)^{n}$ be the attractor of an $n$-vertex IFS as given in Equation \eqref{Invariance 1}. For each $u \in V$, let the mapping, $\phi_u : E_u^\mathbb{N} \to F_u$, be defined for each infinite path $\be \in E_u^\mathbb{N}$ by
\begin{equation}
\label{phi_u}
\phi_u(\be) = x, \textrm{ where } \left\{x\right\}=\bigcap_{k=1}^{\infty} S_{\be \vert_{k}}(F_{t(\be \vert_{ k})}).
\end{equation}
Then $\phi_u$ is surjective. If the SSC is satisfied then $\phi_u$ is bijective.   
\end{lem}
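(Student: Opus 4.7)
The plan has three ingredients: well-definedness of $\phi_u$, surjectivity via iterated application of the invariance equation \eqref{Invariance 1}, and injectivity under SSC by a disjointness argument. I would begin by verifying that the intersection in \eqref{phi_u} really is a single point for every $\be \in E_u^{\mathbb{N}}$. The sets $S_{\be|_k}(F_{t(\be|_k)})$ are non-empty compact subsets of $\mathbb{R}^m$, and from $\be|_{k+1} = \be|_k \, e_{k+1}$ together with $F_{t(\be|_k)} \supset S_{e_{k+1}}(F_{t(e_{k+1})})$ (again by \eqref{Invariance 1}) one obtains $S_{\be|_{k+1}}(F_{t(\be|_{k+1})}) \subset S_{\be|_k}(F_{t(\be|_k)})$, so we have a nested sequence of non-empty compact sets. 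Setting $r_{\max} = \max\{r_e : e \in E^1\} < 1$ and $D = \max\{|F_v| : v \in V\} < \infty$, the diameters satisfy $|S_{\be|_k}(F_{t(\be|_k)})| \leq r_{\max}^{\,k} D \to 0$, so Cantor's intersection theorem gives exactly one point in the intersection.

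For surjectivity, I would argue by induction to build a pre-image of a given $x \in F_u$. The invariance equation \eqref{Invariance 1} applied at $u$ yields an edge $e_1 \in E_u^1$ with $x \in S_{e_1}(F_{t(e_1)})$, that is $x = S_{e_1}(y_1)$ for some $y_1 \in F_{t(e_1)}$. Applying \eqref{Invariance 1} at the vertex $t(e_1)$ to the point $y_1$ gives $e_2 \in E_{t(e_1)}^1$ with $y_1 \in S_{e_2}(F_{t(e_2)})$, hence $x \in S_{e_1 e_2}(F_{t(e_1 e_2)})$. Proceeding recursively produces an infinite path $\be = e_1 e_2 \cdots \in E_u^{\mathbb{N}}$ with $x \in S_{\be|_k}(F_{t(\be|_k)})$ for every $k$; by well-definedness this forces $\phi_u(\be) = x$.

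For injectivity under SSC, suppose $\be, \bd \in E_u^{\mathbb{N}}$ satisfy $\phi_u(\be) = \phi_u(\bd) = x$ but $\be \neq \bd$, and let $k$ be the smallest index at which they disagree, so writing $\bc = \be|_{k-1} = \bd|_{k-1}$ (possibly the empty path) we have $e_k \neq d_k$ in $E_{t(\bc)}^1$. Then $x \in S_{\bc e_k}(F_{t(e_k)}) \cap S_{\bc d_k}(F_{t(d_k)})$, so applying $S_{\bc}^{-1}$ (noting $S_\bc$ is injective as a composition of contracting similarities) yields a point of $S_{e_k}(F_{t(e_k)}) \cap S_{d_k}(F_{t(d_k)})$, contradicting the SSC at vertex $t(\bc)$.

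No single step looks genuinely hard; the one place to be careful is the nestedness $S_{\be|_{k+1}}(F_{t(\be|_{k+1})}) \subset S_{\be|_k}(F_{t(\be|_k)})$ used in the well-definedness step, which must be deduced from \eqref{Invariance 1} rather than assumed, and the edge case $k=1$ (empty $\bc$) in the injectivity argument, where the SSC is invoked directly at the root vertex $u$.
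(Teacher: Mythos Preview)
Your argument is correct and complete. The paper itself does not prove this lemma in-text, deferring instead to \cite[Lemma 1.3.5]{phdthesis_Boore}; the proof you give is the standard one for coding maps of graph-directed attractors and is almost certainly what appears there.
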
 
\begin{proof}See \cite[Lemma 1.3.5]{phdthesis_Boore}.
\end{proof}
For each $u \in V$, the mapping $\phi_u$ given in Equation \eqref{phi_u} is surjective so there exists an infinite path $\be_u \in E_u^\mathbb{N}$ with
\begin{equation*}
\left\{x_u\right\} = \bigcap_{k=1}^{\infty} S_{\be_u \vert_{ k}}(F_{t(\be_u \vert_{ k})})
\end{equation*}
Now $\left(S_{\be_u \vert_{ k}}(F_{t(\be_u \vert_{ k})})\right)$ is a decreasing sequence of non-empty compact sets whose diameters tend to zero as $k$ tends to infinity and so there exists $N(u)\in \mathbb{N}$ such that 
\begin{equation*}
\left|S_{\be_u \vert_j}(F_{t(\be_u \vert_j)})\right|< \varrho_\textup{min}
\end{equation*}
for all $j > N(u)$. Let $l>\max\left\{N(u) : u \in V\right\}$ be chosen and for each $u \in V$ put $\bl_u = \be_u \vert_l$. It follows that $S_{\bl_u}(F_{t(\bl_u)}) \subset B(x_u, \varrho_\textup{min})\subset U_u$, for each $u\in V$. This means that we can always create a family of paths $(\bl_u)_{u \in V}$, all of the same length $l=\left|\bl_u\right|$, where $l\in \mathbb{N}$ may be chosen as large as we like, such that
\begin{equation}
\label{l_u}
S_{\bl_u}(F_{t(\bl_u)}) \subset U_u,
\end{equation}  
for each $u \in V$. The paths $(\bl_u)_{u \in V}$ of Equation \eqref{l_u} are fundamental in the proofs of Theorems \ref{theorem 1} and \ref{theorem 2} and we use them now to define the matrix $\bB(q, \gamma, l)$.

The $n \times n$ non-negative matrix $\bB(q, \gamma, l)$ has its $uv$th entry given by
\begin{equation}
\label{Buv(q,gamma,l)}
B_{uv}\left(q,\gamma,l\right) =   \sum_{\substack{\be \in E_{uv}^l \\ \be \neq \bl_u}}p_{\be}^q r_{\be}^{\gamma\left(q\right)},
\end{equation}
if $\left\{\be: \be \in E_{uv}^l, \be \neq \bl_u\right\} \neq \emptyset$ and is zero otherwise. Equation \eqref{Buv(q,gamma,l)} should be compared with the $uv$th entry of  $\bA\left(q, \beta, l\right)$ in \eqref{Auv(q,beta,l)}. We always assume that $\bB\left(q, \gamma, l\right)$ is irreducible. It can be shown that for a given $q \in \mathbb{R}$ there exists a unique value of $\gamma\left(q\right)$ such that the spectral radius $\rho\left(\bB\left(q, \gamma, l\right)\right)=1$, and this defines the function, $\gamma : \mathbb{R} \to \mathbb{R}$, implicitly as a function of $q$. A proof can be constructed along the lines of that given for $\beta\left(q\right)$ and $\mathbf{A}\left(q,\beta\right)$ in \cite{Paper_Edgar_Mauldin}, using the Perron-Frobenius Theorem.  

Also the Perron-Frobenius Theorem, see Subsection \ref{spectral radius}, ensures the existence of a strictly positive right (column) eigenvector $\bb$ for the matrix  $\mathbf{B}\left(q,\gamma,l\right)$ with eigenvalue 1, so that
\begin{equation}
\label{b}
\mathbf{B}\left(q,\gamma,l\right)\bb  =  \bb.
\end{equation}

\subsubsection {The irreducibility of the matrix $\mathbf{B}\left(q,\gamma,l\right)$}
In the statements of Theorem \ref{theorem 1} and Theorem \ref{theorem 2} it is assumed that the matrix $\mathbf{B}\left(q,\gamma,l\right)$ is irreducible. As this is a genuine assumption it would need to be verified on a case by case basis. This is an area which it would be interesting to investigate further but for now we make do with a few facts about matrices that may be helpful in any specific cases.
 
From \cite{Book_Minc}, if $\mathbf{C}$ is a non-negative, irreducible matrix, with index of primitivity $h$, where 
\begin{equation*}
h = \#  \left\{\lambda: \lambda \textrm{ is an eigenvalue of } \mathbf{C} \textrm{ with } \left|\lambda\right|=\rho(\mathbf{C} )\right\},
\end{equation*}
then
\begin{center}
$\mathbf{C}^k$ is irreducible if and only if $(k, h)=1$.
\end{center}

A matrix is primitive if $h=1$ and so if $\mathbf{A}\left(q,\beta,1\right)$ is primitive then $\mathbf{A}\left(q,\beta,k\right)$ is irreducible for all $k \in \mathbb{N}$. 

We also note that if $\mathbf{A}\left(q,\beta,1\right)$ has at least one positive diagonal element then it is primitive. 

As noted in Subsection \ref{matrix B} the length $l$ of the paths $(\bl_v)_{v \in V}$, in Equation (\ref{l_u}) can be chosen to be as large as we like, which means we can always ensure that $(l,h)=1$ so that $\mathbf{A}\left(q,\beta,l\right)$ is irreducible. The matrix 
$\mathbf{B}\left(q,\gamma,l\right)$ is very closely related to $\mathbf{A}\left(q,\beta,l\right)$ so this may be of help in determining the irreducibility of $\mathbf{B}\left(q,\gamma,l\right)$.

Finally we note that our definition of an $n$-vertex IFS given in Subsection \ref{nvertex} assumes that each vertex in the directed graph has at least two edges leaving it. 

\subsection {The value of $\delta$} \label{deltadef}
In the statement of Theorem \ref{theorem 2} a small constant $\delta$ is used, here we define it using the paths $(\bl_v)_{v \in V}$ of Subsection \ref{matrix B} and the open sets $(U_v)_{v \in V}$ of the OSC/SOSC. The compactness of $S_{\bl_v}(F_{t(\bl_v)})$ means that the distance from $S_{\bl_v}(F_{t(\bl_v)})$ to the closed set $\mathbb{R}^m \setminus U_v$ is positive, and this leads to an associated list of  positive constants $(c_v)_{v \in V}$ where 
\begin{equation}
\label{l_2}
c_v = \dist\left(S_{\bl_v}(F_{t(\bl_v)}), \, \mathbb{R}^m \setminus U_v\right) > 0.
\end{equation}

We define $r_{\min}= \min\left\{r_e  :  e \in E^1 \right\}$ and $d_{\max} = \max \left\{\left|F_v\right|  :  v \in V \right\}$, with $r_{\max}$ and $d_{\min}$ defined similarly and we also put $c_{\min}=\min \{c_v  :  v \in V \}$.

We may always choose $N \in \mathbb{N}$ large enough so that
\begin{equation}
\label{N}
\frac{2d_{\max}}{c_{\min}} \leq \frac{1}{r_{\max}^{N-1}},
\end{equation}
and for such $N$ we now define $\delta$ as
\begin{equation}
\label{delta}
\delta=r_{\min}^{N+l+1}d_{\min}.
\end{equation} 
Because we are not restricted in our choice of the length $l$ of the paths $(\bl_v)_{v \in V}$ it is clear that from now on we may assume $0<\delta<1$ for any given system. We use Inequality \eqref{N} and Equation \eqref{delta} repeatedly in Section \ref{four}, for just one example see Lemma \ref{OSCh}.

\subsection{A lattice matrix of measures} \label{lattice matrix}
Let $\mathcal{M}\left(\mathbb{R}\right)$ denote the space of all Borel measures defined on $\mathbb{R}$ and let 
\begin{displaymath}
\pmb{\mu} = 
\left(\begin{array}{ccc} 
\mu_{11} & \cdots & \mu_{1n} \\
\vdots & \ & \vdots \\
\mu_{n1} & \cdots & \mu_{nn} \\
\end{array}\right)
\end{displaymath}
be an $n \times n$ matrix of measures with $\mu_{ij} \in \mathcal{M}\left(\mathbb{R}\right)$ for all $i,j$. 

In this subsection we use the notation $\left\langle A\right\rangle_{\textrm{group,+}}$ for the additive commutative group generated by the elements of a set $A\subset \mathbb{R}$.

A measure $\nu$ is \emph{arithmetic with span $\kappa > 0$} if and only if $\supp\nu \subset \kappa  \mathbb{Z} $, where $\kappa$ is the largest such number. This means that $\left\langle \supp \nu \right\rangle_{\textrm{group,+}} = \kappa \mathbb{Z}$. 

$\nu$ is a \emph{lattice measure with span $\kappa > 0$} if and only if there exist real numbers $c$, and $\kappa > 0$, such that $\supp\nu \subset   c  +  \kappa \mathbb{Z}$, where $\kappa$ is taken to be the largest such number. 

Following Definition 3.1 in Crump's paper, \cite{Paper_Crump}, we say that the matrix $\pmb{\mu}$ is a \emph{lattice matrix} if the following conditions are met: 
\begin{description}
\item (a)	  Each $\mu_{ii}$ is arithmetic with span $\lambda_{ii}>0$. That is $\left\langle \supp \mu_{ii}\right\rangle_{\textrm{group,+}} = \lambda_{ii} \mathbb{Z}$.
\item (b)		Each $\mu_{ij},\, i\neq j$, is a lattice measure with span $\lambda_{ij}>0$. That is there exist real numbers $b_{ij}$ and $\lambda_{ij} > 0$ such that $\supp \mu_{ij} \subset  b_{ij} + \lambda_{ij} \mathbb{Z} $ where $\lambda_{ij}$ is taken to be the largest such number.  
\item (c)		Each $\lambda_{ij}$ is an integer multiple of some number, the largest such number we shall call $\lambda$. That is $\left\langle \left\{ \lambda_{ij} : 1\leq i,j \leq n \right\} \right\rangle_{\textrm{group,+}} = \lambda \mathbb{Z}$.
\item (d)		If $a_{ij} \in \supp\mu_{ij}$, $a_{jk} \in \supp\mu_{jk}$ and $a_{ik} \in \supp\mu_{ik}$ then $a_{ij} + a_{jk} = a_{ik} + n\lambda$, for some $n\in \mathbb{Z}$ (where n may depend on $i,j$ and $k$). That is for all $i,j,k$ we have $\supp \mu_{ij} + \supp \mu_{jk} - \supp \mu_{ik} \subset \lambda \mathbb{Z}$.
\end{description}

The unique number $\lambda$ is called the \emph{span} of $\pmb{\mu}$.

\medskip

Associated with an $n$-vertex IFS with probabilities is a square $n\times n$ matrix of finite measures $\mathbf{P} = \left(P_{uv}\right)$, where $n=\#V$ is the number of vertices in the graph, and the $uv$th
entry $P_{uv}\in \mathcal{M}\left(\mathbb{R}\right)$ is defined as 
\begin{equation}
\label{P_uv}
P_{uv} = 
\begin{cases} \sum_{e\in E_{uv}^1} p_e^qr_e^{\beta\left(q\right)}\delta_{\ln (1/r_e)} & \  \text{if $E_{uv}^1 \neq \emptyset$,}
\\
0 &  \  \text{if $E_{uv}^1 = \emptyset$.}
\end{cases}
\end{equation}
Here $\delta_{\ln (1/r_e)}$ is the Dirac measure defined, for $x \in \mathbb{R}$, as 
\begin{equation*}
\delta_x\left(B\right)  =   
\begin{cases} 1 & \  \text{if $x \in B$,}
\\
0 &  \  \text{if $x\notin B$,}
\end{cases}
\end{equation*}
for all Borel sets $B \subset \mathbb{R}$. 

The function $\beta : \mathbb{R} \to \mathbb{R}$ in \eqref{P_uv} was defined implicitly in Subsection \ref{matrix A} using the matrix $\bA\left(q, \beta\right)$. Evaluating each measure $P_{uv}$ over $\mathbb{R}$ we obtain
%\begin{equation*}
%A_{uv}\left(q,\beta\right) = P_{uv}\left(\mathbb{R}\right) = \sum_{e\in E_{uv}^1} p_e^qr_e^{\beta\left(q\right)}.  
%\end{equation*}
\begin{equation*}
A_{uv}\left(q,\beta\right) = P_{uv}\left(\mathbb{R}\right) =
\begin{cases} \sum_{e\in E_{uv}^1} p_e^qr_e^{\beta\left(q\right)} & \  \text{if $E_{uv}^1 \neq \emptyset$,}
\\
0 &  \  \text{if $E_{uv}^1 = \emptyset$.}
\end{cases}
\end{equation*}
We also use the notation $\mathbf{P}\left(\mathbb{R}\right)=\mathbf{A}\left(q, \beta\right)$.

\section{ Proof of Theorem \ref{theorem 1}}\label{three}

For the proof of Theorem \ref{theorem 1} we need to apply the Renewal Theorem for a system of renewal equations as stated in \cite[Theorem 3.1(ii)]{Paper_Crump} and restated here as Theorem \ref{the_renewal_theorem}. This extends the standard version of the Renewal Theorem given in \cite[Chapter11, Theorem 2]{Book_Feller} which was used in \cite{Paper_Lalley,Paper_Olsen_1}.

\subsection{The Renewal Theorem for a system of renewal equations} \label{2renewal_theorem}

A system of renewal equations is of the form 
\begin{equation}
M_i\left(t\right) = \int_0^t{\sum_{j = 1}^m M_j\left(t-u\right)dF_{ij}\left(u\right)}\, + \, z_i\left(t\right),\quad \quad i=1,2 \cdots,m 
\label{system_of_renewal_equations}
\end{equation}
where each $z_i\left(t\right)$ is bounded on every finite interval and vanishes for $t < 0$ and each $F_{ij}$ is a finite Borel measure with $F_{ij}((-\infty,0))=0$. If we let $\mathbf{F} = (F_{ij})$, $\mathbf{Z}\left(t\right) = (z_1\left(t\right), \cdots ,z_m\left(t\right))^\transpose$ and $\mathbf{M}\left(t\right) = (M_1\left(t\right), \cdots ,M_m\left(t\right))^\transpose$ we can put Equation (\ref{system_of_renewal_equations}) in the compact form
\begin{equation}
\mathbf{M}\left(t\right) = \mathbf{F}\, * \,\mathbf{M}\left(t\right)\,+\, \mathbf{Z}\left(t\right)
\label{compact_system_of_renewal_equations}
\end{equation}
where $*$ behaves in exactly the same way as matrix multiplication except we convolve elements instead of multiplying them so that 
\begin{equation*}
\left(F_{ij}*M_j\right)\left(t\right) = \int_0^t{ M_j\left(t-u\right)dF_{ij}\left(u\right)}.
\end{equation*}
We use the notation $\mathbf{F}\left(B\right)$ to mean the matrix of real numbers $(F_{ij}\left(B\right))$ where $B\subset \mathbb{R}$ is a Borel set. There are three conditions given by Crump in \cite{Paper_Crump} that need to be satisfied: 
\begin{description}
\item (i) The largest eigenvalue of the matrix $\mathbf{F}\left((-\infty,0]\right)$ is less than 1. 
\item (ii)	The matrix $\mathbf{F}\left(\mathbb{R}\right)$ has all non-negative entries. 
\item (iii) For at least one pair i, j, the finite measure $F_{ij}$ is not concentrated at the origin.
\end{description}

In theory the values of $\alpha_{ij}$ in the statement of Theorem \ref{the_renewal_theorem} can be calculated explicitly for a particular system, see the text preceding \cite[Theorem 3.1(ii)]{Paper_Crump} for details, so the limits given can be determined precisely. The statement of the Renewal Theorem that follows is \cite[Theorem 3.1 (ii)]{Paper_Crump}.

\begin{thm}[The Renewal Theorem]
\label{the_renewal_theorem}
Suppose the spectral radius $\rho (\mathbf{F}(\mathbb{R})) = 1$. Let the vector $\mathbf{M}\left(t\right)$ be as in \textup{(\ref{compact_system_of_renewal_equations})} and suppose each $z_i\left(t\right)$ in $\mathbf{Z}\left(t\right)$ is directly Riemann integrable.
\begin{description}
\item \textup{(a)}  If  $\mathbf{F}$ is a lattice matrix  with span $\lambda > 0$ then for each $i$ 
\begin{equation*}
M_i\left(t + n\lambda \right) \to  \sum_{j=1}^m  \lambda  \alpha_{ij} \sum_{l = -\infty}^{\infty}z_j\left(t + \lambda l\right)
\end{equation*}
as $n \to \infty$.
\item \textup{(b)} If  $\mathbf{F}$ is not a lattice matrix then for each $i$ 
\begin{equation*}
M_i\left(t\right)  \to  \sum_{j=1}^m \alpha_{ij} \int_0^{\infty}z_j\left(u\right)du
\end{equation*}
as $t \to \infty$. 
\end{description}

\end{thm}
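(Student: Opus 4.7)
My plan is to reduce Theorem~\ref{the_renewal_theorem} to a Markov renewal theorem via a Perron--Frobenius rescaling of the system (\ref{compact_system_of_renewal_equations}). First, I would iterate the equation: because condition~(i) forces $\rho(\mathbf{F}((-\infty,0]))<1$, the matrix-valued Neumann series $\mathbf{U}:=\sum_{n\geq 0}\mathbf{F}^{*n}$ converges strongly on every bounded subset of $\mathbb{R}$, and one checks directly that $\mathbf{M}=\mathbf{U}*\mathbf{Z}$ is the unique solution of (\ref{compact_system_of_renewal_equations}) that is bounded on compact sets and vanishes on $(-\infty,0)$. The asymptotics of $M_i(t)$ as $t\to\infty$ are therefore entirely controlled by how the matrix renewal measure $\mathbf{U}$ distributes mass near $+\infty$, and the problem becomes one of describing the tail of this renewal measure.

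Next I would exploit Perron--Frobenius on $\mathbf{F}(\mathbb{R})$. Under irreducibility and $\rho(\mathbf{F}(\mathbb{R}))=1$, there exist strictly positive left and right eigenvectors $\bc,\bb$, which I normalize so that $\bc^{\transpose}\bb=1$. Setting
\begin{equation*}
Q_{ij}(du):=\frac{b_j}{b_i}\,F_{ij}(du),\qquad \tilde M_i(t):=M_i(t)/b_i,\qquad \tilde z_i(t):=z_i(t)/b_i,
\end{equation*}
one has $\sum_j Q_{ij}(\mathbb{R})=1$ for every $i$, so $Q=(Q_{ij})$ is a semi-Markov transition kernel whose embedded Markov chain has stationary distribution $\pi_i=c_ib_i$. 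The system (\ref{compact_system_of_renewal_equations}) transforms into a standard Markov renewal equation $\tilde M_i=\sum_j Q_{ij}*\tilde M_j+\tilde z_i$, and direct Riemann integrability of $z_i$ is preserved under division by the positive constant $b_i$.

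At this point I would invoke the Markov renewal theorem in its two forms. In case~(b), the non-lattice theorem (of Athreya--Murthy--Pyke type) yields that the Markov renewal measure $\sum_n Q_{ij}^{*n}$ is asymptotically $(\pi_j/\mu_Q)\,du$ where $\mu_Q=\sum_{i,j}\pi_i\int_0^\infty u\,Q_{ij}(du)$ is the mean inter-arrival time; convolving with $\tilde z$ gives $\tilde M_i(t)\to \mu_Q^{-1}\sum_j c_j\int_0^\infty \tilde z_j(u)\,du$, and multiplying through by $b_i$ produces (b) with $\alpha_{ij}=b_ic_j/(\mu_Q b_j)$. In case~(a), conditions (a)--(d) of the lattice-matrix definition in Subsection~\ref{lattice matrix} force every iterated kernel $Q_{ij}^{*n}$ to be supported in a single coset $b_{ij}+\lambda\mathbb{Z}$ of $\lambda\mathbb{Z}$, so the renewal measure degenerates to point masses on that coset with asymptotic weight $\lambda\pi_j/\mu_Q$ (by the Blackwell-type lattice Markov renewal theorem); convolving with this discrete measure produces precisely the sum $\lambda\sum_{\ell\in\mathbb{Z}}\tilde z_j(t+\lambda\ell)$ appearing in the statement, and undoing the rescaling gives (a).

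I expect the main obstacle to be the lattice step. Concretely, condition~(d) in Subsection~\ref{lattice matrix} is a cocycle identity on the directed graph of index pairs, and one must use it to prove inductively that each $Q_{ij}^{*n}$ sits on a fixed coset of $\lambda\mathbb{Z}$ depending only on $(i,j)$, not on $n$; without this alignment, Blackwell's theorem cannot be invoked coordinate-wise. Once this is established, direct Riemann integrability of $\tilde z_j$ guarantees absolute convergence of $\sum_\ell \tilde z_j(t+\lambda\ell)$, and the scalar Feller/Blackwell renewal theorems (which themselves rest on either a coupling or a Tauberian argument) complete the proof. The non-lattice case (b) is comparatively routine: once the Perron rescaling is in place, it is the matrix analogue of Feller's classical theorem and the direct Riemann integrability hypothesis is used in exactly the standard way.
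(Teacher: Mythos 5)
The paper offers no proof of Theorem~\ref{the_renewal_theorem}: it is imported verbatim from Crump \cite[Theorem 3.1(ii)]{Paper_Crump} and used as a black box, so there is nothing internal to compare your argument against. Taken on its own terms, your sketch is a coherent and essentially standard route to such a result: the Neumann-series representation $\mathbf{M}=\mathbf{U}*\mathbf{Z}$ with $\mathbf{U}=\sum_{n}\mathbf{F}^{*n}$, the Perron--Frobenius conjugation $Q_{ij}=(b_j/b_i)F_{ij}$ turning the system into a Markov renewal equation with stationary law $\pi_i=c_ib_i$, and the identification of the lattice case's real content as a coset-alignment statement for the iterated kernels driven by condition (d) of Subsection~\ref{lattice matrix} are all correct and are exactly the points a proof must address. (Crump's own argument goes through semi-Markov/first-passage decompositions in the style of Pyke rather than a direct appeal to a Markov renewal theorem of Athreya--McDonald--Ney type, but the two are interchangeable here.)

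Three caveats. First, you invoke irreducibility of $\mathbf{F}(\mathbb{R})$ to get strictly positive eigenvectors $\bb,\bc$, but irreducibility is not among the hypotheses (i)--(iii) reproduced in Subsection~\ref{2renewal_theorem}; it holds in the paper's application because the directed graph is strongly connected, but a proof of the theorem as stated must either add this hypothesis or treat the reducible case. Second, your constants are off by a factor of $b_j$: the Markov renewal theorem gives $\tilde M_i(t)\to\mu_Q^{-1}\sum_j\pi_j\int_0^\infty\tilde z_j(u)\,du=\mu_Q^{-1}\sum_j c_j\int_0^\infty z_j(u)\,du$, hence $\alpha_{ij}=b_ic_j/\mu_Q$ rather than $b_ic_j/(\mu_Q b_j)$; this is harmless for the qualitative conclusion but matters because the paper advertises the explicit computability of the $\alpha_{ij}$. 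Third, in the lattice case the measure $U_{ij}$ concentrates on the coset $b_{ij}+\lambda\mathbb{Z}$, so the convolution a priori yields $\lambda\alpha_{ij}\sum_{\ell}z_j(t-b_{ij}+\lambda\ell)$; unless $b_{ij}\in\lambda\mathbb{Z}$ this is not the sum $\sum_\ell z_j(t+\lambda\ell)$ appearing in the statement, so you need to justify dropping the offsets (or observe that the limit function is in any case periodic with period $\lambda$, which is all the application in Subsection~\ref{A system of renewal equations} actually uses).
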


\subsection{A system of renewal equations} \label{A system of renewal equations}

In this subsection we derive a system of renewal equations of the form given in (\ref{compact_system_of_renewal_equations}). Consider any $q\in \mathbb{R}$ to be fixed. For each vertex $u \in V$, let $L_u:\left(0,\infty\right) \to \mathbb {R}$ be the error function defined, for $r>0$, by 
\begin{equation}
\label{calc_1}
L_u\left(r\right)= M_u^q\left(F_u,r\right)  - \sum_{e\in E_u^1} p_e^q  M_{t(e)}^q\left(F_{t(e)}, r_e^{-1}r\right).
\end{equation} 
Let $\mathbf{P} = \left(P_{uv}\right)$ be the $n\times n$ matrix of finite measures as defined in Subsection \ref{lattice matrix} and let $\mathbf{A}\left(q, \beta\right)=\left(A_{uv}\right)$ be the $n\times n$ matrix of real numbers as defined in Section \ref{matrix A}, where $n=\#V$, is the number of vertices in the directed graph.

For each $u\in V$, let $H_u : [0, \infty) \to \mathbb {R}^+$, be the function defined by 
\begin{equation*}
H_u\left(t\right) = e^{-t\beta\left(q\right)}M_u^q\left(F_u, e^{-t}\right),
\end{equation*}
and let $h_u : [0, \infty) \to \mathbb {R}$, be the function defined by
\begin{equation}
\label{h_u function}
h_u\left(t\right) = e^{-t\beta\left(q\right)}L_u\left(e^{-t}\right),
\end{equation}
which means, by Equation (\ref{calc_1}), that for $t \geq 0$,
\begin{equation}
\label{h_u}
h_u\left(t\right)= e^{-t\beta\left(q\right)}\biggl( M_u^q\left(F_u,e^{-t}\right) - \sum_{e\in E_u^1} p_e^q M_{t(e)}^q\left(F_{t(e)}, r_e^{-1}e^{-t}\right) \biggr).
\end{equation} 
We note that 
\begin{equation}
\label{calc_2}
H_u\left(t - \ln{(r_e^{-1})}\right) \ = \ (r_e^{-1}e^{-t})^{\beta\left(q\right)}M_u^q\left(F_u, r_e^{-1}e^{-t}\right).
\end{equation}
For each $u\in V$,
\begin{align*}
H_u\left(t\right)&= e^{-t\beta\left(q\right)}M_u^q\left(F_u, e^{-t}\right)   \\
&=  e^{-t\beta\left(q\right)}\biggl( \ \sum_{ e\in E_u^1 } p_e^q  M_{t(e)}^q\left(F_{t(e)}, r_e^{-1}e^{-t}\right)+ L_u\left(e^{-t}\right) \ \biggr)   && (\textrm{by (\ref{calc_1})}) \\
&= \sum_{ e\in E_u^1 }p_e^qr_e^{\beta\left(q\right)}(r_e^{-1}e^{-t})^{\beta\left(q\right)} M_{t(e)}^q\left(F_{t(e)}, e^{-t}r_e^{-1}\right) + h_u\left(t\right) && (\textrm{by (\ref{h_u function})})\\
&= \sum_{ e\in E_u^1 }p_e^qr_e^{\beta\left(q\right)}H_{t(e)}\left(t - \ln{(r_e^{-1})}\right) + h_u\left(t\right) && (\textrm{by (\ref{calc_2})})\\
&= \sum_{ \substack{v\in V} } \biggl( \ \sum_{ \substack{e\in E_{uv}^1} } p_e^q r_e^{\beta\left(q\right)}H_v\left(t - \ln{(r_e^{-1})}\right) \ \biggr) + h_u\left(t\right)  && \\
&=  \sum_{ v\in V } \biggl( \ \int_0^t H_v\left(t - x)\right)dP_{uv}(x) \ \biggr) + h_u\left(t\right) && \\
&=  \int_0^t \sum_{ v\in V }H_v\left(t - x\right)dP_{uv}(x) + h_u\left(t\right), 
\end{align*}
for large enough $t \geq \max \left\{  \ln(r_e^{-1}) \, : \, e \in E^1 \right\}$. The penultimate equality follows from the definition of the finite measure $P_{uv}$ in Subsection \ref{lattice matrix}. 

We now have a system of renewal equations (see Equation (\ref{system_of_renewal_equations}) in Subsection \ref{2renewal_theorem}), where for each $u \in V$ 
\begin{equation*}
H_u\left(t\right) = \int_0^t \sum_{ \substack{v\in V} } H_v\left(t - x\right)dP_{uv}(x) + h_u\left(t\right).
\end{equation*}
In compact form, (see Equation (\ref{compact_system_of_renewal_equations}) in Section \ref{2renewal_theorem}),
\begin{equation*}
\mathbf{H}\left(t\right) = \mathbf{P}\, * \,\mathbf{H}\left(t\right)\,+\, \mathbf{h}\left(t\right).
\end{equation*}

It is clear that Crump's conditions (i) and (ii) of Section \ref{2renewal_theorem} are satisfied by the matrices $\mathbf{P}(-\infty,0])$ and $\mathbf{P}\left(\mathbb{R}\right)$ respectively and that (iii) also holds. Also if $\mathbf{A}\left(q, \beta\right)$ is the matrix of Subsection \ref{matrix A}, then $\rho\left(\mathbf{P}\left(\mathbb{R}\right)\right)=\rho\left(\mathbf{A}\left(q, \beta\right)\right) =1$. 

If we assume for the moment that the functions $(h_u)_{u \in V}$ are directly Riemann integrable then we may apply the Renewal Theorem, Theorem \ref{the_renewal_theorem}, to obtain the following. 

By Theorem \ref{the_renewal_theorem}(a), if  $\mathbf{P}$ is a lattice matrix with span $\lambda>0$, then
\begin{equation*}
\lim_{n \to  +\infty}H_u(t+n\lambda) = \lim_{n \to  +\infty} \frac { M_u^q( F_u, e^{-(t + n\lambda)} ) }  { e^{-(t + n\lambda)(-\beta\left(q\right))} } = \sum_{v \in V}  \lambda  \alpha_{uv} \sum_{l = -\infty}^{\infty}h_v(t + \lambda l)= f_u\left(t\right),
\end{equation*}
for each $t \geq \max \left\{  \ln(r_e^{-1})  :  e \in E^1 \right\}$. Because the coefficients $\alpha_{uv}$ and the functions $h_v\left(t\right)$ depend on $q$ so does $f_u\left(t\right)$. Here $h_v\left(t\right)=0$ for $t<0$, for each $v \in V$, and $f_u:\mathbb{R} \to \mathbb{R}^+$ is a positive periodic function with $f_u\left(t\right)=f_u(t+n\lambda)$ for any $n \in \mathbb{Z}$. Taking logarithms of both sides of this equation gives
\begin{equation*}
\lim_{n \to  +\infty} \frac { \ln( M_u^q( F_u, e^{-(t + n\lambda)} ) ) }  { t + n\lambda }  =   \beta\left(q\right).  
\end{equation*}
For the rate of convergence of the last limit, it is convenient to define a function $g:\mathbb{N} \to \mathbb{R}^+$, by

\begin{equation*}
g\left(n\right)= \frac { M_u^q( F_u, e^{-(t + n\lambda)} ) }  { f_u\left(t\right)e^{-(t + n\lambda)(-\beta\left(q\right))} }, 
\end{equation*}
with $\lim_{n \to +\infty}f_u\left(t\right)g\left(n\right) = f_u\left(t\right)>0$.  It follows from Subsection \ref{thm1.1_OSC}, Statement (c$^\prime$) that $f_u\left(t\right)$ is bounded for all $t \geq \max \left\{  \ln(r_e^{-1}) : e \in E^1 \right\}$. This means that for any $t \geq \max \left\{  \ln(r_e^{-1}) : e \in E^1 \right\}$

\begin{align*}
\left|\frac { \ln\left( M_u^q\left( F_u, e^{-(t + n\lambda)} \right) \right) }  { (t + n\lambda) } - \beta\left(q\right)\right| &= \left|\frac { \ln\left( f_u\left(t\right)e^{-(t + n\lambda)(-\beta\left(q\right))}g\left(n\right) \right) }  { (t + n\lambda) } - \beta\left(q\right)\right| \\
&= \left|\frac { \ln\left( f_u\left(t\right)g\left(n\right) \right) }  { (t + n\lambda) } + \beta\left(q\right) - \beta\left(q\right)\right| \\
&= \left|\frac { \ln\left( f_u\left(t\right)g\left(n\right) \right) }  { (t + n\lambda) }\right| \\
&\leq \frac{K}{n},
\end{align*}
for large enough $n$, and some constant $K>0$. 

Therefore for $t \geq \max \left\{  \ln(r_e^{-1}) : e \in E^1 \right\}$

\begin{equation*} 
\frac { \ln\left( M_u^q\left( F_u, e^{-(t + n\lambda)} \right) \right) }  { (t + n\lambda) }  =   \beta\left(q\right)  +    O \left(\frac{1}{n}\right),
\end{equation*}		
as $n \to +\infty$. 

By Theorem \ref{the_renewal_theorem}(b), if  $\mathbf{P}$ is not a lattice matrix then

\begin{equation*}
\lim_{t \to +\infty}H_u\left(t\right) =  \lim_{t \to +\infty} \frac { M_u^q\left(F_u,e^{-t}\right) }  { e^{-t(-\beta\left(q\right))} } = \lim_{r \to 0^+} \frac { M_u^q\left(F_u,r\right) }  { r^{-\beta\left(q\right)} } = \sum_{v \in V} \alpha_{uv} \int_0^{\infty}h_v(x)dx = C_u.  
\end{equation*}
The constant $C_u$ is positive. Because the coefficients $\alpha_{uv}$ and the functions $h_v\left(t\right)$ depend on $q$ so does $C_u$. Taking logarithms of both sides of this equation gives
\begin{equation*}
\lim_{\ r\to 0^+}\frac{\ln \left(M_u^q\left(F_u,r\right)\right)} {-\ln r} = \beta\left(q\right).   
\end{equation*}
For the rate of convergence of this last limit, let $g:(0,+\infty) \to \mathbb{R}^+$, be defined as
\begin{equation*}
g(r)= \frac { M_u^q\left(F_u,r\right) }  { C_ur^{-\beta\left(q\right)} }, 
\end{equation*}
where $\lim_{r \to 0^+}g(r)=1$. We now obtain
\begin{align*}
\left|\frac{\ln \left(M_u^q\left(F_u,r\right)\right)} {-\ln r} - \beta\left(q\right)\right| &= \left|\frac{\ln \left(C_ur^{-\beta\left(q\right)}g(r)\right)} {-\ln r} - \beta\left(q\right)\right| \\
&= \left|\frac { \ln\left( C_ug(r) \right) }  {-\ln r} + \beta\left(q\right) - \beta\left(q\right)\right| \\
&= \left|\frac { \ln\left( C_ug(r) \right) }  {-\ln r}\right| \\
&\leq \frac{K}{-\ln r},
\end{align*}
for small enough $r$, and some constant $K>0$. Therefore, as $r \to 0^+$,
\begin{equation*} 
\frac{\ln \left(M_u^q\left(F_u,r\right)\right)} {-\ln r}  =   \beta\left(q\right)  +    O \left(\frac{1}{-\ln r}\right).
\end{equation*}		

This means that the proof of Theorem \ref{theorem 1} will be complete once we have shown that the functions $(h_u)_{u \in V}$, given by Equation (\ref{h_u}), are directly Riemann integrable in both of the following cases,

\begin{description}
\item (i) \emph{$q\in \mathbb{R}$ and the SSC holds, }
\item (ii) \emph{$q\geq 0$ the OSC holds and the non-negative matrix $\bB\left(q, \gamma, l\right)$, as defined in Subsection \ref{matrix B}, is irreducible with $\rho\left(\bB\left(q, \gamma, l\right)\right) = 1$.}
\end{description}

We do this for (i) in Subsection \ref{thm1.1_SSC} and for (ii) in Subsection \ref{thm1.1_OSC}. 

\subsection{ Proof of Theorem \ref{theorem 1} (i) - SSC} \label{thm1.1_SSC}

In this section we prove that the functions $(h_u)_{u \in V}$, as given in Equation (\ref{h_u}), are directly Riemann integrable for 

\begin{description}
\item (i) \emph{$q\in \mathbb{R}$ and the SSC holds. }
\end{description}

Consider $u \in V$ as fixed and let
\begin{equation}
\label{distance between components}
\varepsilon = \frac{1}{2}\min\left\{\dist \left(S_e(F_{t(e)}),S_f(F_{t(f)})\right):  e,f\in E_u^1, \, e \neq f  \right\}
\end{equation}
By the SSC, for $e,f\in E_u^1$, $e \neq f$, $S_e(F_{t(e)})\cap S_f(F_{t(f)})=\emptyset$, and as $S_e(F_{t(e)})$, $S_f(F_{t(f)})$ are non-empty compact subsets of $F_u$, this implies $\varepsilon>0$.
 
\begin{lem}
\label{SSCa}
Let $q \in \mathbb{R}$, let $u\in V$ be fixed and let $r \in (0,\varepsilon)$. Then 
\begin{align*}
&\textup{(a) } M_u^q\left(F_u,r\right)  =  \sum_{e\in E_u^1 } M_u^q\left(S_e(F_{t(e)}),r\right). \\
&\textup{(b) } For \ each \ e \in E_u^1, \ M_u^q\left(S_e(F_{t(e)}),r\right) =  p_e^q M_{t(e)}^q\left(F_{t(e)}, r_e^{-1}r\right). \\
&\textup{(c) } M_u^q\left(F_u,r\right) =  \sum_{e\in E_u^1 } p_e^q M_{t(e)}^q\left(F_{t(e)}, r_e^{-1}r\right).
\end{align*}					

\end{lem}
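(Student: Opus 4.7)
The plan is to prove (b) first via the self-similar invariance of $\mu_u$ and a bijection between $r$-separated subsets of $S_e(F_{t(e)})$ and $r_e^{-1}r$-separated subsets of $F_{t(e)}$, then prove (a) by showing that any $r$-separated subset of $F_u$ splits into $r$-separated subsets of the pieces and conversely, and finally obtain (c) by combining (a) and (b).

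For (b), I first observe that for any $x \in S_e(F_{t(e)})$ and $r \in (0,\varepsilon)$, the ball $B(x,r)$ is disjoint from every other piece $S_f(F_{t(f)})$ ($f \neq e$), by the choice of $\varepsilon$ in Equation \eqref{distance between components}. Applying the invariance identity \eqref{Invariance 2} with $A_u = B(x,r)$ and using $\supp \mu_{t(f)} = F_{t(f)}$, all terms with $f \neq e$ vanish, leaving $\mu_u(B(x,r)) = p_e\,\mu_{t(e)}(S_e^{-1}(B(x,r)))$. Since $S_e$ is a similarity of ratio $r_e$, $S_e^{-1}(B(x,r)) = B(S_e^{-1}(x), r_e^{-1}r)$. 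The map $x \mapsto S_e^{-1}(x)$ is a bijection from $S_e(F_{t(e)})$ to $F_{t(e)}$ that sends $r$-separated subsets to $r_e^{-1}r$-separated subsets (distances scale by $r_e^{-1}$). Summing $\mu_u(B(x,r))^q = p_e^q\,\mu_{t(e)}(B(S_e^{-1}(x), r_e^{-1}r))^q$ over an $r$-separated $D \subset S_e(F_{t(e)})$ and taking the supremum on both sides gives (b).

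For (a), the upper bound comes from partitioning any $r$-separated $D \subset F_u$ as $D = \bigsqcup_{e \in E_u^1} D_e$ with $D_e = D \cap S_e(F_{t(e)})$; each $D_e$ is an $r$-separated subset of $S_e(F_{t(e)})$, so $\sum_{x \in D}\mu_u(B(x,r))^q = \sum_{e}\sum_{x \in D_e}\mu_u(B(x,r))^q \leq \sum_{e} M_u^q(S_e(F_{t(e)}),r)$. Taking the supremum over $D$ yields $M_u^q(F_u,r) \leq \sum_e M_u^q(S_e(F_{t(e)}),r)$. For the reverse inequality, given $r$-separated $D_e \subset S_e(F_{t(e)})$ for each $e$, their union $D = \bigcup_e D_e$ is itself $r$-separated in $F_u$: points within the same piece are $r$-separated by assumption, while points $x \in D_e$, $y \in D_f$ with $e \neq f$ satisfy $|x-y| \geq \dist(S_e(F_{t(e)}),S_f(F_{t(f)})) \geq 2\varepsilon > 2r$ by the definition of $\varepsilon$. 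Hence $\sum_e \sum_{x \in D_e}\mu_u(B(x,r))^q \leq M_u^q(F_u,r)$, and taking suprema independently over each $D_e$ gives the matching bound.

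Part (c) is immediate from substituting (b) into (a). The only subtle point is the passage from the SSC separation of the pieces to the identity $\mu_u(B(x,r)) = p_e\mu_{t(e)}(B(S_e^{-1}(x),r_e^{-1}r))$, but this is handled cleanly by the invariance equation once the geometric disjointness of $B(x,r)$ from competing pieces has been noted; no obstacle is expected beyond that routine verification.
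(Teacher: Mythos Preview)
Your proposal is correct and follows essentially the same approach as the paper: the paper likewise proves (a) by splitting an $r$-separated $D\subset F_u$ into the disjoint pieces $D_e=D\cap S_e(F_{t(e)})$ and conversely gluing $r$-separated $D_e'$ into an $r$-separated subset of $F_u$ via the $2\varepsilon$ gap, and proves (b) by applying the invariance relation \eqref{Invariance 2}, using $r<\varepsilon$ to kill the $f\neq e$ terms, and the similarity bijection $S_e^{-1}$ between packings at the two scales. The only cosmetic difference is the order of presentation (the paper does (a) before (b)).
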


\begin{proof} Part (c) follows immediately from parts (a) and (b).

(a) Let $D$ be an $r$-separated subset of $F_u$, then by Equation (\ref{Invariance 1}), 
\begin{equation*}
D = \bigcup_{ e\in E_u^1 } \left(D \cap S_e(F_{t(e)})\right)= \bigcup_{ e\in E_u^1 }D_e,
\end{equation*}
where the union is disjoint by the SSC and each $D_e= D \cap S_e(F_{t(e)})$ is an $r$-separated subset of $S_e(F_{t(e)})$. 

For each $e \in E_u^1$ let $D_e^{\prime}$ be an $r$-separated subset of $S_e(F_{t(e)})$. Then for $e,f \in E_u^1$, $e \neq f$, it follows by the SSC and the definition of $\varepsilon$, that for $0<r<\varepsilon$, 
\begin{equation*}
\bigcup_{ x\in D_e^{\prime}} B(x,r) \  \cap  \ \bigcup_{ y\in D_f^{\prime}} B(y,r) = \emptyset. 
\end{equation*}
This means that
\begin{equation*}
D^{\prime} = \bigcup_{e\in E_u^1} D_e^{\prime},
\end{equation*}
will be an $r$-separated subset of $F_u$, where the union is disjoint. 

From these observations we obtain  
\begin{equation*}
\sum_{x \in D} \mu_u\left(B(x,r)\right)^q = \sum_{e\in E_u^1 }\biggl( \ \sum_{x \in D_e}\mu_u\left(B(x,r)\right)^q \ \biggr) \leq  \sum_{ e\in E_u^1 } M_u^q\left(S_e(F_{t(e)}),r\right),
\end{equation*}
and
\begin{equation*}
M_u^q\left(F_u,r\right) \geq \sum_{x \in D^{\prime}} \mu_u\left(B(x,r)\right)^q  =  \sum_{ e\in E_u^1 }\biggl( \ \sum_{x \in D_e^{\prime} }\mu_u\left(B(x,r)\right)^q \ \biggr).
\end{equation*}
Taking the supremum over all $r$-separated subset $D$ in the first inequality, and all $r$-separated subsets $D_e^{\prime}$ in the second inequality, gives the required result.

(b) Let $D_e$ be an $r$-separated subset of $S_e(F_{t(e)})$ for $e \in E_u^1$. As $S_e$ is a similarity with contracting similarity ratio $r_e$ so $S_e^{-1}$ is a similarity with an expanding similarity ratio of $r_e^{-1}$ and for any $x \in D_e$, 
\begin{equation} 
\label{(i)}
 S_e^{-1}\left(B(x,r)\right) = B\left(S_e^{-1}(x),r_e^{-1}r\right).
\end{equation} 
As $\supp\mu_{t(f)} = F_{t(f)}$, $0<r<\varepsilon$, and the SSC is satisfied, it is clear that for all $f\in E_u^1$ with $f\neq e$,
\begin{equation}
\label{(ii)}
\mu_{t(f)}\left(S_f^{-1}(B(x,r))\right) = 0, \, \textrm{ for all } x\in D_e.   
\end{equation}
This means that
\begin{align*}
\sum_{x \in D_e} \mu_u\left(B(x,r)\right)^q &=\sum_{x \in D_e}\biggl( \ \sum_{ \substack{ f\in E_u^1}  }p_f \mu_{t(f)}\left(S_f^{-1}(B(x,r))\right)  \ \biggr)^q && (\textrm{by Equation (\ref{Invariance 2})}) \\
&=  \sum_{x \in D_e} \left( p_e \mu_{t(e)}\left(S_e^{-1}\left(B(x,r)\right)\right) \right)^q && (\textrm{by (\ref{(ii)})}) \\
&=  p_e^q\sum_{x \in D_e} \mu_{t(e)}\left(B(S_e^{-1}(x),r_e^{-1}r)\right)^q && (\textrm{by (\ref{(i)})}) \\
&=  p_e^q\sum_{x \in S_e^{-1}(D_e)} \mu_{t(e)}\left(B(x,r_e^{-1}r)\right)^q,    
\end{align*}
that is 
\begin{equation}
\label{D equality}
\sum_{x \in D_e} \mu_u\left(B(x,r)\right)^q= p_e^q\sum_{x \in S_e^{-1}(D_e)} \mu_{t(e)}\left(B(x,r_e^{-1}r)\right)^q.   
\end{equation}

From Equation (\ref{(i)}) $S_e^{-1}(D_e)$ is an $r_e^{-1}r$-separated subset of $F_{t(e)}$, so that 
\begin{equation*}
\sum_{x \in D_e} \mu_u\left(B(x,r)\right)^q = p_e^q\sum_{x \in S_e^{-1}(D_e)} \mu_{t(e)}\left(B(x,r_e^{-1}r)\right)^q \leq p_e^qM_{t(e)}^q\left(F_{t(e)}, r_e^{-1}r\right).
\end{equation*}
Similarly if $D$ is any $r_e^{-1}r$-separated subset of $F_{t(e)}$ then $D_e=S_e(D)$ will be an $r$-separated subset of $S_e(F_{t(e)})$, so that
\begin{equation*}
M_u^q\left(S_e(F_{t(e)}),r\right) \geq \sum_{x \in D_e} \mu_u\left(B(x,r)\right)^q \ = \ p_e^q\sum_{x \in D}\mu_{t(e)}\left(B(x,r_e^{-1}r)\right)^q,
\end{equation*}
where the last equality is obtained by putting $D_e=S_e(D)$ in Equation (\ref{D equality}). Taking the supremum over any $r$-separated subset $D_e$ of $S_e(F_{t(e)})$ in the first inequality, and over any $r_e^{-1}r$-separated subset $D$ of $F_{t(e)}$ in the second inequality completes the proof. 
\end{proof}

As defined in Equations (\ref{h_u function}) and (\ref{h_u}), the function $h_u$ is given by
\begin{equation*}
h_u\left(t\right)=e^{-t\beta\left(q\right)}L_u\left(e^{-t}\right)=e^{-t\beta\left(q\right)}\biggl( M_u^q\left(F_u,e^{-t}\right) - \sum_{e\in E_u^1} p_e^q M_{t(e)}^q\left(F_{t(e)}, r_e^{-1}e^{-t}\right) \biggr).
\end{equation*}

Without loss of generality we now assume  $\varepsilon<1$.

\medskip

(a) \emph{$h_u\left(t\right)$ is Riemann integrable on any compact interval $[a,b]\subset [0,+\infty)$}.
 
If $t \in [0, +\infty)$ then $e^{-t}$, $ r_e^{-1}e^{-t} \in (0,R]$ where $R = \max \{  r_e^{-1} :  e \in E^1 \}$. Let $[c,d]\subset (0,R]$ be any compact interval then $M_v^q\left(F_v, r\right)$ is Riemann integrable on $[c,d]$ for any $v \in V$ by Lemma \ref{MFvrcty} which implies (a). 

\medskip

(b) \emph{$h_u\left(t\right)$ is bounded for $t \in [0,-\ln \varepsilon]$}.

If $t \in [0,-\ln \varepsilon]$ then $e^{-t}$, $r_e^{-1}e^{-t} \in [\varepsilon,R]$ where $R$ is as defined in part (a). It follows from the definition of the $q$th packing moment in Subsection \ref{The $q$th packing moment} that $M_v^q\left(F_v,r\right)$ is bounded for $r \in [\varepsilon,R]$ for each $v \in V$. Therefore $h_u\left(t\right)$ is bounded on the interval $[0,-\ln \varepsilon]$.

\medskip

(c) \emph{$h_u\left(t\right)=0$  for $t \in (-\ln \varepsilon,+\infty)$}.

For $t \in (-\ln \varepsilon,+\infty)$, that is for $e^{-t} \in (0,  \varepsilon)$, Lemma \ref{SSCa}(c) implies
\begin{equation*}
L_u\left(e^{-t}\right)=M_u^q\left(F_u,e^{-t}\right) - \sum_{ e\in E_u^1 } p_e^q  M_{t(e)}^q\left(F_{t(e)}, r_e^{-1}e^{-t}\right) = 0,
\end{equation*}
see Equation (\ref{calc_1}). This proves $h_u\left(t\right)=0$ for $t \in (-\ln \varepsilon,+\infty)$. 

\medskip

From statements (b) and (c) it is clear that we can always find positive constants $c_1$ and $c_2$, so that Equation (\ref{directly_Riemann_integrable}) of Subsection \ref{Directly Riemann integrable functions} holds. Taken together with Statement (a), this is enough to prove that $h_u\left(t\right)$ is directly Riemann integrable.

\subsection{ Proof of Theorem \ref{theorem 1} (ii) - OSC } \label{thm1.1_OSC}

We now need to prove that the functions $(h_u)_{u \in V}$, as given in Equation (\ref{h_u}), are directly Riemann integrable for 

\begin{description}
\item (ii) \emph{$q\geq 0$ the OSC holds and the non-negative matrix $\bB\left(q, \gamma, l\right)$, as defined in Subsection \ref{matrix B}, is irreducible with $\rho\left(\bB\left(q, \gamma, l\right)\right) = 1$.}
\end{description}

For the OSC, statement (c) of Section \ref{thm1.1_SSC} may not be true, that is it may not be the case that $h_u\left(t\right)=0$ for $t \in (-\ln \varepsilon,+\infty)$. Indeed we may have a situation where $S_e\left(F_{t(e)})\cap S_f(F_{t(f)}\right) \neq \emptyset$ for $e,f \in E_u^1$, $f \neq e$, so that $\varepsilon=0$, where $\varepsilon$ is the minimum distance betweeen level-$1$ elementary pieces as defined in Equation (\ref{distance between components}). Instead to show $h_u\left(t\right)$ is directly Riemann integrable we will show, that for some small $\delta$,  $0<\delta<1$,

(a) \emph{$h_u\left(t\right)$ is Riemann integrable on any compact interval $[a,b]\subset [0,+\infty)$}.

(b) \emph{$h_u\left(t\right)$ is bounded for $t \in [0,-\ln \delta]$}.

(c$^\prime$) \emph{There exist positive constants $d_1$ and $d_2$ such that $\left|h_u\left(t\right)\right|  \leq  d_1 e^{-d_2 t}$,  for $t \in (-\ln \delta,+\infty)$}.

Statements (a) and (b) hold by exactly the same arguments as those given in Section \ref{thm1.1_SSC}. Statements (b) and (c$^\prime$) imply that there exist positive constants $c_1$ and $c_2$, so that Equation (\ref{directly_Riemann_integrable}) of Subsection \ref{Directly Riemann integrable functions} holds. It is clear then that to show $h_u\left(t\right)$ is directly Riemann integrable we need only to prove statement (c$^\prime$) which we do in two parts.

The first part is to prove Lemma \ref{OSC1} below, which states that for any $\delta$, $0 < \delta < 1$, for $t \in (-\ln \delta,+\infty)$, 
\begin{equation}
\label{bound 1}
\left|h_u\left(t\right)\right|\leq e^{-t\beta\left(q\right)}\sum_{ e\in E_u^1 } \biggl( \ \sum_{ \substack{ f\in E_u^1 \\ f \neq e } }Q_{e,f}^q\left(e^{-t}\right) \ \biggr),
\end{equation}
where $Q_{e,f}^q\left(r\right)$ is a $q$th packing moment at the vertex $u \in V$ as defined in Equation (\ref{Q_{e,f}}). In fact it is in the proof of this inequality that we require $q \geq 0$, see the proof of Lemma \ref{OSCb}(b) below.

The second part is to prove Theorem \ref{theorem 2}, this states that for a suitable choice of $\delta$, $0<\delta < 1$, as given in Subsection \ref{delta}, for $r \in (0,\delta)$ and $q \in \mathbb{R}$, 
\begin{equation}
\label{bound 2}
Q_{e,f}^q\left(r\right)   \leq   C_{e,f}\left(q\right)r^{-\gamma\left(q\right)}, 
\end{equation}
for some positive number $ C_{e,f}\left(q\right)$.

Now as $t \in (-\ln \delta,+\infty)$ if and only if $e^{-t} \in (0,\delta)$, inequalities (\ref{bound 1}) and (\ref{bound 2}) combine to give
\begin{equation*}
\left|h_u\left(t\right)\right|\leq d_1e^{-t\left(\beta\left(q\right)-\gamma\left(q\right)\right)},
\end{equation*}
for some positive constant $d_1$ (which depends on $q$). As $\beta\left(q\right)>\gamma\left(q\right)$ by Lemma \ref{g<b} below, putting $d_2=\beta\left(q\right)-\gamma\left(q\right)>0$ completes the proof of statement (c$^\prime$).

\medskip

It remains then to prove inequalities (\ref{bound 1}) and (\ref{bound 2}), which we do in Lemma \ref{OSC1} that follows and Theorem \ref{theorem 2} which is proved in Sections \ref{four} and \ref{five}. 

\begin{lem}
\label{2GlemA}
Let $\bM$ be a non-negative irreducible $n\times n$ matrix with $\rho(\bM)=1$. Suppose $\bv = (v_1, v_2,  \ldots ,  v_n)^\transpose$ is a positive vector such that
\begin{equation*}
\mathbf{0}< \bv \leq \bM \bv,
\end{equation*}
then
\begin{equation*}
 \bv = \bM \bv.
\end{equation*}
\end{lem}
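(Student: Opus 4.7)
The plan is to exploit the Perron--Frobenius theorem (as recalled in Subsection \ref{spectral radius}) applied to the \emph{transpose} of $\bM$, rather than $\bM$ itself. Since $\bM$ is non-negative and irreducible, so is $\bM^\transpose$, and the two matrices share the same spectrum; in particular $\rho(\bM^\transpose)=\rho(\bM)=1$. Consequently there exists a strictly positive (column) vector $\bw$ with
\begin{equation*}
\bM^\transpose \bw = \bw, \quad \text{equivalently} \quad \bw^\transpose \bM = \bw^\transpose,
\end{equation*}
i.e.\ a strictly positive left Perron eigenvector of $\bM$ associated with the eigenvalue $1$.

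The key step is then to pair the hypothesised inequality $\bv \leq \bM\bv$ against $\bw^\transpose$ on the left. Writing $\bd = \bM\bv - \bv$, the hypothesis says $\bd$ is a non-negative vector. Using $\bw^\transpose \bM = \bw^\transpose$ one computes
\begin{equation*}
\bw^\transpose \bd \;=\; \bw^\transpose(\bM\bv - \bv) \;=\; (\bw^\transpose \bM)\bv - \bw^\transpose \bv \;=\; \bw^\transpose \bv - \bw^\transpose \bv \;=\; 0.
\end{equation*}
Since $\bw > \mathbf{0}$ componentwise and $\bd \geq \mathbf{0}$ componentwise, the scalar $\bw^\transpose \bd = \sum_{i=1}^n w_i d_i$ is a sum of non-negative terms that equals zero, so each individual term $w_i d_i$ must vanish. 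The strict positivity of each $w_i$ forces $d_i = 0$ for all $i$, i.e.\ $\bd = \mathbf{0}$, which is exactly $\bM\bv = \bv$.

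There is essentially no obstacle beyond invoking Perron--Frobenius correctly: the only point requiring a sentence of care is the justification that $\bM^\transpose$ inherits irreducibility (immediate since $(\bM^\transpose)^k_{ij} = (\bM^k)_{ji}$, so positivity of entries of powers is preserved under transposition) and that it therefore admits a strictly positive Perron eigenvector, which in the original matrix plays the role of the left eigenvector. The positivity hypothesis $\bv > \mathbf{0}$ is not actually needed for the argument above; only the non-negativity of $\bM\bv - \bv$ together with strict positivity of the left Perron eigenvector is used.
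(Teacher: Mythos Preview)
Your argument is correct and is precisely the standard Perron--Frobenius computation: the paper's own proof is simply a reference to ``standard Perron--Frobenius theory'' (citing \cite{Book_Seneta} and \cite[Lemma 3.2.1]{phdthesis_Boore}) without further detail, and the pairing of $\bM\bv-\bv\geq\mathbf{0}$ against a strictly positive left Perron eigenvector is exactly the classical proof one finds in those sources. Your observation that the hypothesis $\bv>\mathbf{0}$ is not actually used in this step is also accurate.
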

\begin{proof}This follows from standard Perron-Frobenius theory, see \cite{Book_Seneta} or \cite[Lemma 3.2.1]{phdthesis_Boore}.
\end{proof}

\begin{lem}
\label{g<b} 
Let $\bigl(V,E^*,i,t,r,p,((\mathbb{R}^m,\left| \ \  \right|))_{u \in V},(S_e)_{e \in E^1}\bigr)$ be an $n$-vertex IFS with probabilities which satisfies the OSC. Suppose that the non-negative matrix $\bB\left(q, \gamma, l\right)$, as defined in Subsection \ref{matrix B}, is irreducible. Let $q \in \mathbb{R}$ and let $\beta\left(q\right)$,$\gamma\left(q\right)\in \mathbb{R}$ be the unique numbers for which $\rho\left(\mathbf{A}\left(q, \beta\right)\right) = \rho\left(\bB\left(q, \gamma, l\right)\right)= 1$, as described in Subsections \ref{matrix A} and \ref{matrix B}. 

Then
\begin{equation*}
\gamma\left(q\right) < \beta\left(q\right).
\end{equation*}
\end{lem}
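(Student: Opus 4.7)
The plan is to derive $\gamma(q) < \beta(q)$ by comparing $\bB(q,\gamma,l)$ to $\bA(q,\beta,l) = \bA(q,\beta)^l$, both of which have spectral radius $1$ by construction. Define $\bB(q,\beta,l)$ as the matrix obtained from Equation (\ref{Buv(q,gamma,l)}) by replacing the exponent $\gamma(q)$ with $\beta(q)$; this matrix differs from $\bA(q,\beta,l)$ only in that each row $u$'s sum omits the term coming from the distinguished path $\bl_u$. The strategy is to show that this single omission already forces $\rho(\bB(q,\beta,l)) < 1$, and then to use strict monotonicity of $\rho(\bB(q,\gamma,l))$ in $\gamma$ to conclude that $\gamma(q)$ must lie strictly below $\beta(q)$.

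For the first step, entrywise $\bB(q,\beta,l) \leq \bA(q,\beta,l)$, and in row $u$, column $v = t(\bl_u)$, the inequality is strict because the omitted term $p_{\bl_u}^q r_{\bl_u}^{\beta(q)}$ is positive. I also need $\bA(q,\beta,l)$ to be irreducible in order to apply the Perron-Frobenius comparison. This comes for free: irreducibility depends only on the zero/nonzero pattern, and whenever $B_{uv}(q,\gamma,l) > 0$ the sum $A_{uv}(q,\beta,l)$ contains at least the same positive terms, so the (strongly connected) support of $\bB(q,\gamma,l)$ is contained in that of $\bA(q,\beta,l)$, making the latter irreducible. The standard Perron-Frobenius comparison (see \cite{Book_Seneta}) then yields $\rho(\bB(q,\beta,l)) < \rho(\bA(q,\beta,l)) = 1$.

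For the second step, observe that for $\gamma_1 < \gamma_2$ and every path $\be$ appearing in the sum, $r_{\be} \in (0,1)$ gives $r_{\be}^{\gamma_2} < r_{\be}^{\gamma_1}$, so $\bB(q,\gamma_2,l)$ is strictly smaller than $\bB(q,\gamma_1,l)$ at every nonzero entry. Since both matrices share the same irreducible pattern, a second application of the Perron-Frobenius comparison gives $\rho(\bB(q,\gamma_2,l)) < \rho(\bB(q,\gamma_1,l))$, i.e.\ strict monotonicity. Combining this with the first step and the defining identity $\rho(\bB(q,\gamma(q),l)) = 1$ forces $\gamma(q) < \beta(q)$.

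There is no substantive obstacle in this argument --- the whole proof is Perron-Frobenius bookkeeping. The only subtle point is the irreducibility of $\bA(q,\beta,l)$, which a priori need not follow from irreducibility of $\bA(q,\beta,1)$ (taking powers can destroy irreducibility, as the paper itself notes), but it is automatic from the pattern-dominance observation above, given the standing hypothesis that $\bB(q,\gamma,l)$ is irreducible.
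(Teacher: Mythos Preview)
Your proof is correct and rests on the same Perron--Frobenius ingredients as the paper's, but it is organised a little differently. The paper compares $\bB(q,\gamma,l)$ to $\bA(q,\gamma,l)$ (same exponent $\gamma$), obtains only the weak inequality $\rho(\bB)\le\rho(\bA)$, passes through the identity $\rho(\bA(q,\gamma,l))=\rho(\bA(q,\gamma,1))^l$ and the known monotonicity of $\gamma\mapsto\rho(\bA(q,\gamma,1))$ to get $\gamma\le\beta$, and then rules out equality by a separate contradiction argument (their statement~(c), which invokes Lemma~\ref{2GlemA}). You instead compare $\bB(q,\beta,l)$ to $\bA(q,\beta,l)$ (same exponent $\beta$), use the \emph{strict} Perron--Frobenius comparison once to get $\rho(\bB(q,\beta,l))<1$ directly, and finish with the strict monotonicity of $\gamma\mapsto\rho(\bB(q,\gamma,l))$, which you prove by the same device. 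Your route is slightly more economical: it avoids the $l$-th--root fact and the separate equality-case argument. You also make explicit the point that irreducibility of $\bA(q,\beta,l)$ follows from the assumed irreducibility of $\bB(q,\gamma,l)$ by pattern dominance --- the paper needs this too (its statement~(c) requires the larger matrix to be irreducible for Lemma~\ref{2GlemA} to apply) but leaves it implicit.
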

\begin{proof}
We can replace $\beta\left(q\right)$ by $\gamma\left(q\right)$ in the definition of $\bA(q,\beta,l)$ in Equation \eqref{Auv(q,beta,l)}, so that the $uv$th entry of the matrix $\bA(q,\gamma,l)$ is 
\begin{equation*}
A_{uv}(q,\gamma,l)  =\sum_{\substack{\be \in E_{uv}^l }}p_{\be}^q r_\be^\gamma.  
\end{equation*} 
The uvth entry of $\mathbf{B}\left(q,\gamma,l\right)$, as defined in Equation \eqref{Buv(q,gamma,l)}, means that along each row $u$ of the matrices $\mathbf{B}\left(q,\gamma,l\right)$ and $\mathbf{A}(q,\gamma,l)$
\begin{equation*}
B_{uv}\left(q,\gamma,l\right)=\sum_{\substack{\be \in E_{uv}^l \\ \be \neq \bl_u}}p_{\be}^qr_{\be}^\gamma=\sum_{\substack{\be \in E_{uv}^l }}p_\be^qr_\be^\gamma=A_{uv}\left(q,\gamma,l\right),
\end{equation*}
except at the single entry with $v=t(\bl_u)$ where there is a strict inequality
\begin{equation*}
B_{ut(\bl_u)}\left(q,\gamma,l\right)=\sum_{\substack{\be \in E_{ut(\bl_u)}^l \\ \be \neq \bl_u}}p_{\be}^qr_{\be}^\gamma<\sum_{\substack{\be \in E_{ut(\bl_u)}^l }}p_\be^qr_\be^\gamma=A_{ut(\bl_u)}\left(q,\gamma,l\right).
\end{equation*}
It follows that
\begin{equation}
\label{Bnot=A}
\mathbf{B}\left(q,\gamma,l\right)\neq \mathbf{A}\left(q,\gamma,l\right).
\end{equation}
and
\begin{equation}
\label{B<=A}
\mathbf{B}\left(q,\gamma,l\right)\leq \mathbf{A}\left(q,\gamma,l\right).
\end{equation}
Let $M_m$ denote the set of real $m \times m$ matrices. For $\bM \in M_m$, $\rho(\bM)$ is the spectral radius of $\bM$ as defined in Equation (\ref{spectral radius eqn}). The \emph{spectral radius formula} states that
\begin{equation}
\label{Spectral radius formula}
\rho\left(\bM\right)=\lim_{k \to \infty} \left\|\bM^k \right\|^{\frac{1}{k}},
\end{equation}
see \cite{Book_Maddox,Book_Seneta}. All norms are equivalent on finite dimensional spaces so it doesn't matter which norm we use, but for our purposes it is convenient to give a specific norm, defined for a matrix $\bM \in M_m$, with $ij$th entry $M_{ij}\in \mathbb{R}$, by
\begin{equation}
\label{matrix norm}
\left\|\bM \right\|=\max\biggl\{\sum_{j=1}^m \left|M_{ij}\right|: 1\leq i \leq m\biggr\}.
\end{equation}
We now prove statements (a), (b), and (c) that follow.

\medskip

(a) \emph{For non-negative matrices $\bC,\bD$, if $\bC \leq \bD$ then $\rho\left(\bC\right)\leq \rho\left(\bD\right)$}.

Since $\mathbf{0}\leq \bC \leq \bD$, it follows that $\mathbf{0}\leq \bC^n \leq \bD^n$, for any $n \in \mathbb{N}$, and from the specific definition of the norm in (\ref{matrix norm}) this means that $0 \leq \left\|\bC^n\right\| \leq \left\|\bD^n\right\|$. Equation (\ref{Spectral radius formula}) now implies $\rho\left(\bC\right)\leq \rho\left(\bD\right)$.

\medskip

(b) \emph{For $k \in \mathbb{N}$ and a non-negative matrix $\bC$, $\rho(\bC^k)= \rho(\bC)^k$}.

Let $n \in \mathbb{N}$. If $\lambda$ is an eigenvalue of $\bC$ then $\lambda^n$ is an eigenvalue of $\bC^n$. It follows from the definition of the spectral radius in Equation (\ref{spectral radius eqn}) that $\rho(\bC^n)\geq \rho(\bC)^n$. The norm defined in Equation (\ref{matrix norm}) is submultiplicative, see \cite{Book_Maddox}, so $\left\|\bC^n \right\|\leq \left\|\bC \right\|^n$. It follows that $\left\|(\bC^n)^k \right\|^{\frac{1}{k}}\leq (\left\|\bC^k \right\|^{\frac{1}{k}})^n$ for any $k \in \mathbb{N}$ and Equation \ref{Spectral radius formula} implies $\rho(\bC^n) \leq \rho(\bC)^n$.

\medskip

(c) \emph{For non-negative matrices $\bC,\bD\in M_m$, if $\rho\left(\bC\right)=\rho\left(\bD\right)=1$, and $\bC\leq \bD$, then $\bC=\bD$}.

By the Perron-Frobenius Theorem, $\bC$ has a unique (up to scaling) positive eigenvector $\bc$, with eigenvalue $\rho\left(\bC\right)=1$, see Subsection \ref{spectral radius}. It follows that
\begin{equation}
\label{DC}
\mathbf{0}\leq \left(\bD-\bC\right)\bc=\bD\bc - \bc,
\end{equation}
which means
\begin{equation*}
\bc \leq \bD\bc.
\end{equation*}
Applying Lemma \ref{2GlemA} gives $\bc = \bD\bc$, and so by Equation (\ref{DC}),
\begin{equation*}
\left(\bD-\bC\right)\bc=\bD\bc - \bc=\mathbf{0},
\end{equation*}
and $\bD=\bC$.

\medskip

Inequality (\ref{B<=A}), together with statements (a) and (b), means
\begin{equation*}
1   =   \rho\left(\mathbf{B}\left(q,\gamma,l\right)\right)  \leq \rho\left(\mathbf{A}(q,\gamma,l)\right) = \left(\rho\left(\mathbf{A}\left(q,\gamma,1\right)\right)\right)^l,
\end{equation*}
and so $\rho\left(\mathbf{A}\left(q,\gamma,1\right)\right)\geq 1$. This implies that $\gamma \leq \beta$ because $\rho\left(\mathbf{A}\left(q,\gamma,1\right)\right)$ (strictly) decreases as $\gamma$ increases and $\rho\left(\mathbf{A}\left(q,\beta,1\right)\right)=1$, (see \cite[Section 3]{Paper_Edgar_Mauldin} for the details). If $\gamma= \beta$ then 
\begin{equation*}
1  \ =  \ \rho\left(\mathbf{B}\left(q,\gamma,l\right)\right) \ = \rho\left(\mathbf{A}\left(q,\gamma,l\right)\right).
\end{equation*}
and so by statement (c),
\begin{equation*}
\mathbf{B}\left(q,\gamma,l\right)  = \mathbf{A}\left(q,\gamma,l\right).
\end{equation*}
This contradicts Equation (\ref{Bnot=A}). Therefore $\gamma < \beta$.
\end{proof}

We will use the following notation, illustrated in Figure \ref{Ch4a}, in the preliminary Lemmas \ref{OSCa} and \ref{OSCb} that lead up to the important Lemma \ref{OSC1}. As in Section \ref{thm1.1_SSC}, $D$ indicates an $r$-separated subset of $F_u$ and for each edge $e \in E_u^1$, we will use $D_e$ to indicate an $r$-separated subset of $S_e(F_{t(e)})\subset F_u$. 

Given an $r$-separated subset $D_e$ of $S_e(F_{t(e)})$, for each $f \in E_u^1$, with $f \neq e$, let  
\begin{equation}
\label{H_{e,f}}
H_{e,f} = D_e \cap S_f(F_{t(f)})(r), 
\end{equation}
where $S_f(F_{t(f)})(r)$ is the closed $r$-neighbourhood of $S_f(F_{t(f)})$, so that $H_{e,f}$ is an $r$-separated subset of $S_e(F_{t(e)})\cap S_f(F_{t(f)})(r)$. 

Let
\begin{equation}
\label{H_e}
H_e = \bigcup_{\substack{f \in E_u^1 \\ f \neq e}}H_{e,f},
\end{equation}
where this union is not necessarily disjoint.

\begin{figure}[!htb]
\begin{center}
%trim=left bottom right top (also need clip).
\includegraphics[trim = 30mm 175mm 30mm 15mm, clip, scale=0.7]{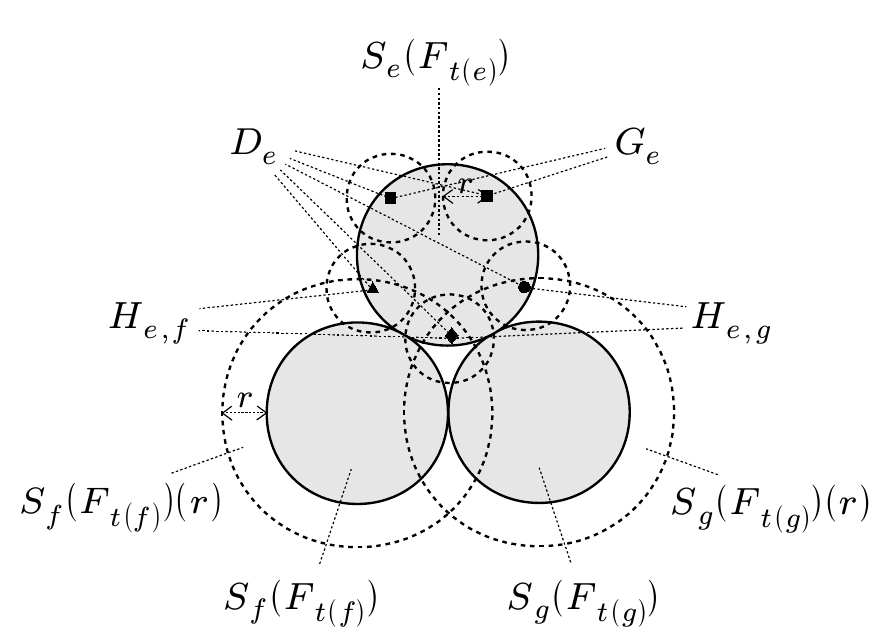}%pic can be png, jpg or pdf for pdfLaTeX - not eps.
\end{center}
\caption{A schematic representation in $\mathbb{R}^2$ of $r$-separated subsets $D_e$, $G_e$, $H_{e,f}$ and $H_{e,g}$.}
\label{Ch4a}
\end{figure}

Let 
\begin{equation}
\label{G_e}
G_e = D_e \setminus \bigcup_{\substack{ f \in E_u^1 \\ f \neq e}}S_f(F_{t(f)})(r)=D_e \setminus H_e, 
\end{equation}
so that
\begin{equation}
\label{D_e = G_e}
D_e = G_e \cup H_e,
\end{equation}
and this union is disjoint.

Let
\begin{equation}
\label{G}
G=\bigcup_{\substack {e \in E_u^1}}G_e, 
\end{equation}
then $G$ is always an $r$-separated subset of $F_u$, and this union is disjoint.

Figure \ref{Ch4a} illustrates these definitions schematically, the small squares are points in $G_e$ and $H_e=H_{e,f}\cup H_{e,g}$ consists of the three points represented by the triangle, diamond and circle, the union is not disjoint here because the diamond belongs to both $H_{e,f}$ and $H_{e,g}$. However it is clear from the diagram that $D_e=G_e\cup H_e$ is a disjoint union.
 
\begin{lem}
\label{OSCa}
Let $q \in \mathbb{R}$ and $r > 0$. Then
\begin{align*}
&\textup{(a) }M_u^q\left(F_u,r\right)  \leq  \sum_{  e\in E_u^1 } M_u^q\left(S_e(F_{t(e)}),r\right). \\
&\textup{(b) }\textrm{For each $e \in E_u^1$}, \ M_u^q\left(S_e(F_{t(e)}),r\right)  \leq p_e^qM_{t(e)}^q\left(F_{t(e)}, r_e^{-1}r\right) + \sum_{ \substack{f\in E_u^1 \\ f\neq e } } Q_{e,f}^q\left(r\right). \\
&\textup{(c) }M_u^q\left(F_u,r\right) - \sum_{ e\in E_u^1 }p_e^qM_{t(e)}^q\left(F_{t(e)}, r_e^{-1}r\right) \leq \sum_{ e\in E_u^1 }\biggl( \  \sum_{ \substack{ f\in E_u^1 \\ f\neq e } } Q_{e,f}^q\left(r\right) \ \biggr).
\end{align*}			
\end{lem}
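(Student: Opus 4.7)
The plan is to mirror the script of Lemma \ref{SSCa} (the SSC version) but, because the OSC permits the overlaps $S_e(F_{t(e)}) \cap S_f(F_{t(f)}) \neq \emptyset$, the exact identities there will become inequalities with error terms measured by the $Q_{e,f}^q(r)$.

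For part (a), I would take an arbitrary $r$-separated subset $D$ of $F_u$ and use the invariance equation \eqref{Invariance 1} to write $D = \bigcup_{e \in E_u^1} (D \cap S_e(F_{t(e)}))$. Under the OSC this union need not be disjoint, but each piece $D \cap S_e(F_{t(e)})$ is automatically an $r$-separated subset of $S_e(F_{t(e)})$. Since $\supp \mu_u = F_u$, each $\mu_u(B(x,r))$ is strictly positive for $x \in F_u$, so $\mu_u(B(x,r))^q > 0$ regardless of the sign of $q$; this positivity lets me absorb any double-counting caused by the overlaps into the upper bound, giving
\begin{equation*}
\sum_{x \in D} \mu_u(B(x,r))^q \leq \sum_{e \in E_u^1} \sum_{x \in D \cap S_e(F_{t(e)})} \mu_u(B(x,r))^q \leq \sum_{e \in E_u^1} M_u^q(S_e(F_{t(e)}), r),
\end{equation*}
and taking the supremum over $D$ delivers (a).

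For part (b), fix $e \in E_u^1$ and an $r$-separated subset $D_e$ of $S_e(F_{t(e)})$, and invoke the disjoint decomposition $D_e = G_e \cup H_e$ set up in \eqref{D_e = G_e}. On $G_e$, by construction $B(x,r) \cap S_f(F_{t(f)}) = \emptyset$ for every $f \neq e$, so $\mu_{t(f)}(S_f^{-1}(B(x,r))) = 0$ for such $f$, and the invariance equation \eqref{Invariance 2} collapses to the single term $\mu_u(B(x,r)) = p_e \mu_{t(e)}(B(S_e^{-1}(x), r_e^{-1}r))$, exactly as in the SSC case treated in Lemma \ref{SSCa}(b). Since $S_e^{-1}(G_e)$ is an $r_e^{-1}r$-separated subset of $F_{t(e)}$, this yields $\sum_{x \in G_e} \mu_u(B(x,r))^q \leq p_e^q M_{t(e)}^q(F_{t(e)}, r_e^{-1}r)$. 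On $H_e = \bigcup_{f \neq e} H_{e,f}$ the union may overlap (cf.\ Figure \ref{Ch4a}), but positivity of $\mu_u(B(x,r))^q$ again gives
\begin{equation*}
\sum_{x \in H_e} \mu_u(B(x,r))^q \leq \sum_{\substack{f \in E_u^1 \\ f \neq e}} \sum_{x \in H_{e,f}} \mu_u(B(x,r))^q \leq \sum_{\substack{f \in E_u^1 \\ f \neq e}} Q_{e,f}^q(r),
\end{equation*}
where in the last step I use that each $H_{e,f}$ is an $r$-separated subset of $S_e(F_{t(e)}) \cap S_f(F_{t(f)})(r)$, matching the definition of $Q_{e,f}^q(r)$ in \eqref{Q_{e,f}}. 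Adding these two contributions and taking the supremum over $D_e$ gives (b).

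Part (c) is then a one-line consequence: sum the inequality in (b) over $e \in E_u^1$, substitute into (a), and rearrange the $p_e^q M_{t(e)}^q$ terms to the left-hand side. I expect no deep obstacle in any of these steps; the only mildly subtle point is that the argument must work for \emph{all} real $q$ (not just $q \geq 0$), and this is secured precisely by the strict positivity of $\mu_u(B(x,r))$ on $\supp \mu_u = F_u$, which keeps every summand positive so that the sub-additivity inequalities survive the overlaps arising under the OSC.
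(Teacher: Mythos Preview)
Your proposal is correct and follows essentially the same argument as the paper's proof: the same non-disjoint cover $D=\bigcup_e(D\cap S_e(F_{t(e)}))$ for (a), the same disjoint split $D_e=G_e\cup H_e$ with the invariance relation collapsing on $G_e$ and the $H_{e,f}$ pieces bounded by $Q_{e,f}^q(r)$ for (b), and (c) by summing (b) and feeding into (a). Your explicit remark that strict positivity of $\mu_u(B(x,r))$ on $F_u$ is what makes the over-counting inequalities valid for all $q\in\mathbb{R}$ is a point the paper uses implicitly.
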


\begin{proof}Part (c) is an immediate consequence of parts (a) and (b).

(a) Let $D$ be an $r$-separated subset of $F_u$ then, by Equation (\ref{Invariance 1}), 
\begin{equation*}
D = \bigcup_{ \substack{e\in E_u^1}  } \left(D \cap S_e(F_{t(e)})\right)= \bigcup_{ \substack{e\in E_u^1}  } D_e, 
\end{equation*}
where this union is not necessarily disjoint and each $D_e$ is an $r$-separated subset of $S_e(F_{t(e)})$. It follows that
\begin{equation*}
\sum_{x \in D} \mu_u\left(B(x,r)\right)^q  \leq  \sum_{ e\in E_u^1 } \biggl(  \ \sum_{x \in D_e}\mu_u\left(B(x,r)\right)^q  \ \biggr)  \leq  \sum_{  e\in E_u^1 }M_u^q\left(S_e(F_{t(e)}), r\right), 
\end{equation*}
where the second inequality follows directly from the definition of $M_u^q(S_e(F_{t(e)}), r)$. Taking the supremum over all $r$-separated subsets $D$ of $F_u$ gives the required result, which should be compared with Lemma \ref{SSCa}(a).

%\medskip

(b) Let $D_e$ be an $r$-separated subset of $S_e(F_{t(e)})$, then from Equations (\ref{H_{e,f}}), (\ref{H_e}), (\ref{G_e}), (\ref{D_e = G_e}), and (\ref{G}), it follows that,
\begin{align*}
\sum_{x \in D_e} \mu_u&\left(B(x,r)\right)^q  \\
&= \sum_{x \in G_e} \mu_u\left(B(x,r)\right)^q  +  \sum_{x \in H_e} \mu_u\left(B(x,r)\right)^q  \\
&\leq  \sum_{x \in G_e} \mu_u\left(B(x,r)\right)^q + \sum_{\substack{f \in E_u^1 \\ f \neq e}}\biggl( \ \sum_{x \in H_{e,f}}\mu_u\left(B(x,r)\right)^q \  \biggr) \\
&=\sum_{x \in G_e} \biggl( \ \sum_{  f\in E_u^1  }p_f \mu_{t(f)}\left(S_f^{-1}(B(x,r))\right) \ \biggr)^q + \sum_{\substack{f \in E_u^1 \\ f \neq e}} \biggl( \ \sum_{x \in H_{e,f}}\mu_u\left(B(x,r)\right)^q \  \biggr) \\
&=\sum_{x \in G_e} p_e^q\mu_{t(e)}\left(S_e^{-1}(B(x,r))\right)^q + \sum_{\substack{f \in E_u^1 \\ f \neq e}} \biggl( \ \sum_{x \in H_{e,f}}\mu_u\left(B(x,r)\right)^q  \ \biggr).   
\end{align*}   
Here we have the first inequality because the union may not be disjoint in Equation (\ref{H_e}). We then apply Equation (\ref{Invariance 2}), and the last equality follows because $\mu_{t(f)}\left(S_f^{-1}(B(x,r))\right) = 0$ for $ f \in E_u^1$ with $f\neq e$. Specifically, any point $x \in G_e$, is at least a distance $r$ from $S_f(F_{t(f)})$ for $f \neq e$, from the definition of $G_e$ in Equation (\ref{G_e}). For such $x$, $S_f^{-1}(B(x,r))\cap F_{t(f)}=\emptyset$, and as $\supp\mu_{t(f)}=F_{t(f)}$, this implies $\mu_{t(f)}\left(S_f^{-1}(B(x,r))\right)=0$. 

We note that as $H_{e,f}$ is an $r$-separated subset of $S_e(F_{t(e)})\cap S_f(F_{t(f)})(r)$,
\begin{equation*}
\sum_{x \in H_{e,f}}\mu_u\left(B(x,r)\right)^q \leq Q_{e,f}^q\left(r\right),
\end{equation*}
and as $S_e^{-1}\left(B(x, r)\right)=B\left(S_e^{-1}(x),r_e^{-1}r\right)$ we obtain
\begin{align*}
\sum_{x \in D_e} \mu_u\left(B(x,r)\right)^q &\leq \ p_e^q\sum_{x \in G_e} \mu_{t(e)}\left(B(S_e^{-1}(x),r_e^{-1}r)\right)^q  +  \sum_{\substack{f \in E_u^1 \\ f \neq e}}Q_{e,f}^q\left(r\right) \\
&= \ p_e^q\sum_{x \in S_e^{-1}(G_e)} \mu_{t(e)}\left(B(x,r_e^{-1}r)\right)^q  +  \sum_{\substack{f \in E_u^1 \\ f \neq e}}Q_{e,f}^q\left(r\right) \\
& \leq \ p_e^qM_{t(e)}^q\left(F_{t(e)}, r_e^{-1}r\right) +  \sum_{\substack{f \in E_u^1 \\ f \neq e}}Q_{e,f}^q\left(r\right), 
\end{align*}
where this last inequality holds since $S_e^{-1}(G_e)$ is an $r_e^{-1}r$-separated subset of $F_{t(e)}$. As $D_e$ is any $r$-separated subset of $S_e(F_{t(e)})$, this proves part (b).
\end {proof}
We need $q \geq 0$ in Lemma \ref{OSCb}(b) that follows and this is the reason we need $q \geq 0$ in the statement of Theorem \ref{theorem 1} (ii).
\begin{lem}
\label{OSCb}
Let $r > 0$.  
\begin{align*}
&\textup{(a) } \textrm{For $q \in \mathbb{R}$, } - \sum_{ e\in E_u^1 } \biggl( \ \sum_{ \substack{ f\in E_u^1 \\ f\neq e } } Q_{e,f}^q\left(r\right) \ \biggr)  \leq  M_u^q\left(F_u,r\right) - \sum_{ e\in E_u^1 } M_u^q\left(S_e(F_{t(e)}),r\right).\\
&\textup{(b) } \textrm{For $q \geq 0$, and each $e \in E_u^1$, } M_u^q\left(S_e(F_{t(e)}),r\right) \geq p_e^qM_{t(e)}^q\left(F_{t(e)}, r_e^{-1}r\right).\\
&\textup{(c) } \textrm{For $q \geq 0$, } - \sum_{ e\in E_u^1 }\biggl( \ \sum_{ \substack{ f\in E_u^1 \\ f\neq e } } Q_{e,f}^q\left(r\right) \ \biggr) \leq M_u^q\left(F_u,r\right) - \sum_{ e\in E_u^1 } p_e^qM_{t(e)}^q\left(F_{t(e)}, r_e^{-1}r\right).
\end{align*}		
\end{lem}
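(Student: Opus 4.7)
The strategy is to prove (a) and (b) separately; then (c) follows immediately by substituting the per-edge bound from (b), namely $p_e^q M_{t(e)}^q(F_{t(e)}, r_e^{-1} r) \leq M_u^q(S_e(F_{t(e)}), r)$, into the inequality of (a). Part (a) will be valid for all $q \in \mathbb{R}$, while (b) will require $q \geq 0$; this is precisely where the hypothesis in case (ii) of Theorem \ref{theorem 1} is forced.

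For (b) I adapt the proof of Lemma \ref{SSCa}(b). Given any $r_e^{-1} r$-separated subset $D$ of $F_{t(e)}$, the image $S_e(D)$ is an $r$-separated subset of $S_e(F_{t(e)})$ since $S_e$ is a similarity with ratio $r_e$. Invoking \eqref{Invariance 2} and keeping only the $f = e$ contribution on the right-hand side, which is legitimate because every term is non-negative, gives
\begin{equation*}
\mu_u(B(S_e(x), r)) \geq p_e \, \mu_{t(e)}(S_e^{-1}(B(S_e(x), r))) = p_e \, \mu_{t(e)}(B(x, r_e^{-1} r)).
\end{equation*}
Under the OSC, unlike the SSC, the discarded terms can be strictly positive, so only an inequality is available. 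Because $q \geq 0$ and $t \mapsto t^q$ is non-decreasing on $[0, \infty)$, raising both sides to the power $q$ preserves the inequality: $\mu_u(B(S_e(x), r))^q \geq p_e^q \mu_{t(e)}(B(x, r_e^{-1} r))^q$. Summing over $x \in D$ and taking the supremum over all $r_e^{-1} r$-separated subsets $D$ of $F_{t(e)}$ produces (b). This monotonicity step is the key obstacle: it fails for $q < 0$, which is exactly why the restriction $q \geq 0$ is imposed.

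For (a) I use the notation $D_e, G_e, H_{e,f}, H_e, G$ from Equations \eqref{H_{e,f}}--\eqref{G}. Fix $\varepsilon > 0$ and, for each $e \in E_u^1$, choose an $r$-separated subset $D_e$ of $S_e(F_{t(e)})$ whose $q$th packing sum is within $\varepsilon / \#E_u^1$ of $M_u^q(S_e(F_{t(e)}), r)$. The disjoint decomposition $D_e = G_e \cup H_e$ gives
\begin{equation*}
\sum_{x \in G_e} \mu_u(B(x,r))^q = \sum_{x \in D_e} \mu_u(B(x,r))^q - \sum_{x \in H_e} \mu_u(B(x,r))^q.
\end{equation*}
Since $H_e \subseteq \bigcup_{f \neq e} H_{e,f}$ with non-negative summands, and each $H_{e,f}$ is an $r$-separated subset of $S_e(F_{t(e)}) \cap S_f(F_{t(f)})(r)$, the correction term is bounded above by $\sum_{f \neq e} Q_{e,f}^q(r)$. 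The structural fact from \eqref{G} that $G = \bigcup_e G_e$ is an $r$-separated subset of $F_u$ (as a disjoint union) then yields
\begin{equation*}
M_u^q(F_u, r) \geq \sum_{x \in G} \mu_u(B(x,r))^q \geq \sum_{e \in E_u^1} \sum_{x \in D_e} \mu_u(B(x,r))^q - \sum_{e \in E_u^1} \sum_{\substack{f \in E_u^1 \\ f \neq e}} Q_{e,f}^q(r).
\end{equation*}
Letting $\varepsilon \to 0^+$ in the near-optimal choices replaces each $\sum_{x \in D_e} \mu_u(B(x,r))^q$ by $M_u^q(S_e(F_{t(e)}), r)$, and rearranging completes (a). Finally, combining (a) with (b) gives (c) directly. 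The essential content of the argument is thus the point-accounting in (a), which pays for the overlap of level-$1$ pieces using the $Q_{e,f}^q$ terms, together with the monotonicity step in (b) that forces $q \geq 0$ under the OSC.
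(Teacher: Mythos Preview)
Your argument follows the paper's closely. For (b) your proof is in fact more direct than the paper's: you drop the $f\neq e$ terms in \eqref{Invariance 2} and apply the monotonicity of $t\mapsto t^q$ for $q\geq 0$ immediately to the whole packing sum, whereas the paper first decomposes $D_e=G_e\cup H_e$, obtains exact equality on $G_e$ (where the $f\neq e$ contributions vanish), uses the monotone bound \eqref{q ineq} only on $H_e$, and then recombines---your shortcut loses nothing.

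For (a) there is a small order-of-operations difference. The paper begins from a single $r$-separated $D\subset F_u$, sets $D_e=D\cap S_e(F_{t(e)})$ so that $G\subset D$ is $r$-separated automatically, and only at the final line passes to the supremum over each $D_e$; you instead select independent near-optimal $D_e$'s from the outset and cite the paper's assertion after \eqref{G} that $G=\bigcup_e G_e$ is $r$-separated. Both routes ultimately lean on the same structural claim, and (c) then follows identically in each. One caution worth recording: when the $D_e$'s are chosen independently, the definition of $G_e$ only gives $|x-y|>r$ for $x\in G_e$, $y\in G_{e'}$, $e\neq e'$ (since $x\notin S_{e'}(F_{t(e')})(r)$ and $y\in S_{e'}(F_{t(e')})$), not $|x-y|>2r$; the paper's route via a common $D$ avoids this at the cost of needing care in the final supremum step.
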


\begin{proof}Part (c) is an immediate consequence of parts (a) and (b).

(a) Let $D$ be an $r$-separated subset of $F_u$ and for each $e \in E_u^1$, let 
\begin{equation*}
D_e=D \cap S_e(F_{t(e)}),
\end{equation*}
so that $D_e$ is an $r$-separated subset of $S_e(F_{t(e)})$. From Equations (\ref{H_{e,f}}), (\ref{H_e}), (\ref{G_e}), (\ref{D_e = G_e}), and (\ref{G}), it follows that,
\begin{align*}
M_u^q\left(F_u, r\right)&\geq  \sum_{x \in G } \mu_u\left(B(x,r)\right)^q   \\
&= \sum_{e \in  E_u^1}\biggl( \  \sum_{x \in G_e}  \mu_u\left(B(x,r)\right)^q  \ \biggr) \\
&= \sum_{e \in  E_u^1}\biggl( \ \sum_{  x \in  D_e \setminus H_e }   \mu_u\left(B(x,r)\right)^q  \ \biggr) \\
& =  \sum_{e \in E_u^1} \biggl( \ \sum_{x \in D_e}\mu_u\left(B(x,r)\right)^q  -   \sum_{  x \in H_e} \mu_u\left(B(x,r)\right)^q  \ \biggr) \\
& \geq  \sum_{e \in E_u^1}\biggl( \ \sum_{x \in D_e}\mu_u\left(B(x,r)\right)^q  -  \sum_{\substack{f \in E_u^1 \\ f \neq e}}\biggl( \ \sum_{x \in H_{e,f}}\mu_u\left(B(x,r)\right)^q \ \biggr) \ \biggr),  
\end{align*}  
where the last line is an inequality because the union in Equation (\ref{H_e}) is not necessarily disjoint. Finally it follows from the definition of $Q_{e,f}^q\left(r\right)$ and $M_u^q\left(S_e(F_{t(e)}), r\right)$ in  Subsection \ref{The $q$th packing moment} that  
\begin{equation*}
M_u^q\left(F_u, r\right) \geq \sum_{e \in E_u^1}\biggl( \ M_u^q\left(S_e(F_{t(e)}), r\right)  -  \sum_{\substack{f \in E_u^1 \\ f \neq e}}Q_{e,f}^q\left(r\right) \ \biggr), 
\end{equation*}
which proves part (a).

\medskip

(b) Let D be an $r_e^{-1}r$-separated subset of $F_{t(e)}$ and let $D_e=S_e(D)$ so that as usual $D_e$ is an $r$-separated subset of $S_e(F_{t(e)})$. 

From Equations (\ref{H_{e,f}}), (\ref{H_e}), (\ref{G_e}), (\ref{D_e = G_e}), (\ref{G}), and using the same arguments given in detail in the proof of Lemma \ref{OSCa}(b), we obtain,
\begin{align*}
M_u^q\left(S_e(F_{t(e)}),r\right) &\geq  \sum_{x \in D_e} \mu_u\left(B(x,r)\right)^q \\
&= \sum_{x \in G_e} \mu_u(B(x,r))^q +  \sum_{x \in H_e}\mu_u\left(B(x,r)\right)^q \\
&= \sum_{x \in G_e} \biggl( \ \sum_{  f\in E_u^1  }p_f \mu_{t(f)}\left(S_f^{-1}(B(x,r))\right) \ \biggr)^q +  \sum_{x \in H_e }\mu_u\left(B(x,r)\right)^q \\
&= \sum_{x \in G_e} p_e^q \mu_{t(e)}\left(S_e^{-1}(B(x,r))\right)^q +  \sum_{x \in H_e }\mu_u\left(B(x,r)\right)^q.
\end{align*}

By Equation (\ref{Invariance 2}), if $q\geq 0$, then the following inequality must hold,
\begin{equation*}
\mu_u\left(B(x,r)\right)^q = \biggl( \ \sum_{  f\in E_u^1  }p_f \mu_{t(f)}\left(S_f^{-1}(B(x,r))\right) \ \biggr)^q \geq p_e^q \mu_{t(e)}\left(S_e^{-1}(B(x,r))\right)^q, 
\end{equation*}
that is,
\begin{equation}
\label{q ineq}
\mu_u\left(B(x,r)\right)^q - p_e^q \mu_{t(e)}\left(S_e^{-1}(B(x,r))\right)^q \geq 0.
\end{equation}

Using this inequality gives
\begin{align*}
M&_u^q\left(S_e(F_{t(e)}),r\right) && \\
&\geq \sum_{x \in G_e \cup H_e} p_e^q \mu_{t(e)}\left(S_e^{-1}(B(x,r))\right)^q \\
& \quad \quad \quad \quad + \sum_{x \in H_e }\left( \mu_u\left(B(x,r)\right)^q - p_e^q \mu_{t(e)}\left(S_e^{-1}(B(x,r))\right)^q  \right) \\
&\geq \sum_{x \in G_e\cup H_e} p_e^q \mu_{t(e)}\left(S_e^{-1}(B(x,r))\right)^q &&(\textrm{by Inequality (\ref{q ineq})})\\
&=  p_e^q\sum_{x \in D_e} \mu_{t(e)}\left(S_e^{-1}(B(x,r))\right)^q &&\\
&=  p_e^q  \sum_{x \in S_e^{-1}(D_e)} \mu_{t(e)}\left(B(x,r_e^{-1}r)\right)^q &&\\
&=  p_e^q  \sum_{x \in D} \mu_{t(e)}\left(B(x,r_e^{-1}r)\right)^q. &&
\end{align*}
As $D$ is any $r_e^{-1}r$-separated subset of $F_{t(e)}$ this completes the proof of part (b). Compare this with Lemma \ref{SSCa}(b). 
\end{proof}

\begin{lem}
\label{OSCc}
Let $q \geq  0$ and $r>0$, then   
\begin{equation*}
\biggl|M_u^q\left(F_u,r\right) - \sum_{ e\in E_u^1 }p_e^qM_{t(e)}^q\left(F_{t(e)}, r_e^{-1}r\right)\biggr| \leq \sum_{ e\in E_u^1 } \biggl( \ \sum_{ \substack{ f\in E_u^1 \\ f \neq e } }Q_{e,f}^q\left(r\right) \ \biggr).
\end{equation*}
\end{lem}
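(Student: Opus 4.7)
The plan is simply to combine the two preceding lemmas, namely Lemma \ref{OSCa}(c) and Lemma \ref{OSCb}(c), which together give matching upper and lower bounds on the signed quantity
\[
M_u^q\left(F_u,r\right) - \sum_{e\in E_u^1}p_e^q M_{t(e)}^q\left(F_{t(e)}, r_e^{-1}r\right).
\]
Since the hypothesis is $q \geq 0$, both lemmas apply (Lemma \ref{OSCa}(c) needs only $q \in \mathbb{R}$ while Lemma \ref{OSCb}(c) requires $q \geq 0$), and the two non-negative bounds coincide with
\[
\sum_{e\in E_u^1}\biggl(\ \sum_{\substack{f\in E_u^1 \\ f \neq e}}Q_{e,f}^q\left(r\right)\ \biggr).
\]

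More concretely, I would first write down the upper bound from Lemma \ref{OSCa}(c) and the lower bound from Lemma \ref{OSCb}(c) as a two-sided inequality, and then observe that this is exactly the statement that the absolute value of the middle expression is bounded by the common right-hand side. There is no obstacle here: the work was already done in establishing Lemmas \ref{OSCa} and \ref{OSCb}, so this lemma is essentially a bookkeeping step whose purpose is to package the two one-sided estimates into the single form that will later be combined with Theorem \ref{theorem 2} to produce inequality \eqref{bound 1} in Subsection \ref{thm1.1_OSC}.
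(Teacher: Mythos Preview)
Your proposal is correct and matches the paper's own proof exactly: the paper simply states that the result ``is established by Lemma \ref{OSCa}(c) and Lemma \ref{OSCb}(c)''. Your additional remarks about which lemma needs $q\geq 0$ and the role of this lemma in the overall argument are accurate.
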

\begin{proof} This is established by Lemma \ref{OSCa}(c) and Lemma \ref{OSCb}(c). 
\end{proof}

Our next lemma proves Inequality (\ref{bound 1}).
\begin{lem}
\label{OSC1}
Let $q \geq  0$ and $t \in [0,+\infty)$, then for the functions $(h_u)_{u \in V}$, as defined in Equation (\ref{h_u}),   
\begin{equation*}
\left|h_u\left(t\right)\right| \leq e^{-t\beta\left(q\right)}\sum_{ e\in E_u^1 } \biggl( \ \sum_{ \substack{ f\in E_u^1 \\ f \neq e } }Q_{e,f}^q(e^{-t}) \ \biggr).
\end{equation*}
\end{lem}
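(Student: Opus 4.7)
The plan is to observe that this lemma is essentially an immediate translation of Lemma \ref{OSCc} into the variable $t$ via the substitution $r = e^{-t}$, combined with multiplication by the positive factor $e^{-t\beta(q)}$.

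More precisely, I would proceed as follows. By the definition of $h_u$ in Equation (\ref{h_u}), we have
\begin{equation*}
h_u(t) = e^{-t\beta(q)}\biggl(M_u^q(F_u, e^{-t}) - \sum_{e \in E_u^1} p_e^q M_{t(e)}^q(F_{t(e)}, r_e^{-1}e^{-t})\biggr).
\end{equation*}
Taking absolute values and pulling the positive scalar $e^{-t\beta(q)}$ outside gives
\begin{equation*}
|h_u(t)| = e^{-t\beta(q)} \biggl|M_u^q(F_u, e^{-t}) - \sum_{e \in E_u^1} p_e^q M_{t(e)}^q(F_{t(e)}, r_e^{-1}e^{-t})\biggr|.
\end{equation*}
Since $t \in [0, +\infty)$ we have $r := e^{-t} \in (0, 1] \subset (0, +\infty)$, so Lemma \ref{OSCc} applies at this value of $r$, yielding
\begin{equation*}
\biggl|M_u^q(F_u, e^{-t}) - \sum_{e \in E_u^1} p_e^q M_{t(e)}^q(F_{t(e)}, r_e^{-1}e^{-t})\biggr| \leq \sum_{e \in E_u^1}\biggl(\sum_{\substack{f \in E_u^1 \\ f \neq e}} Q_{e,f}^q(e^{-t})\biggr).
\end{equation*}
Multiplying through by $e^{-t\beta(q)} > 0$ and combining with the previous display gives the desired inequality. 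The hypothesis $q \geq 0$ is used only insofar as it is required by Lemma \ref{OSCc} (specifically through Lemma \ref{OSCb}(b)); no further restriction on $q$ is needed here.

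There is no real obstacle to overcome: all the work has already been done in Lemmas \ref{OSCa}, \ref{OSCb}, and \ref{OSCc}, and Lemma \ref{OSC1} just repackages Lemma \ref{OSCc} in the exponential coordinate $t = -\ln r$ that is natural for applying the Renewal Theorem in Subsection \ref{A system of renewal equations}. Because of this, the proof is essentially a one-line deduction.
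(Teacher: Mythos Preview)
Your proposal is correct and follows exactly the same approach as the paper: apply the definition of $h_u$ from Equation~(\ref{h_u}), pull out the positive factor $e^{-t\beta(q)}$, and invoke Lemma~\ref{OSCc} with $r=e^{-t}\in(0,1]$. The paper's proof is literally this one-line deduction.
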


\begin{proof}
As $t \in [0,+\infty)$ if and only if $e^{-t}\in (0,1]$, Equation (\ref{h_u}) together with Lemma \ref{OSCc} imply
\begin{align*}
\left|h_u\left(t\right)\right|&=e^{-t\beta\left(q\right)}\biggl|M_u^q\left(F_u,e^{-t}\right) - \sum_{e\in E_u^1} p_e^q M_{t(e)}^q\left(F_{t(e)}, r_e^{-1}e^{-t}\right) \biggr| \\
&\leq e^{-t\beta\left(q\right)}\sum_{ e\in E_u^1 } \biggl( \ \sum_{ \substack{ f\in E_u^1 \\ f \neq e } }Q_{e,f}^q(e^{-t}) \ \biggr).
\qedhere
\end{align*}
\end{proof}

\section{ Proof of Theorem \ref{theorem 2} - preliminary lemmas} \label{four}
We have broken the proof of Theorem \ref{theorem 2}  down into a sequence of small steps in the form of a sequence of lemmas that now follow. In this section our aim is to prove Lemmas \ref{OSC6} and \ref{OSC9} which are  needed for the final step in the proof given in Section \ref{five}. 
\begin{lem}
\label{OSCintersection}
Let $\bigl(V,E^*,i,t,r,((\mathbb{R}^m,\left| \ \  \right|))_{v \in V},(S_e)_{e \in E^1}\bigr)$ be an $n$-vertex IFS which satisfies the OSC, let $\left(U_v\right)_{v \in V}$ be the non-empty bounded open sets of the OSC, and let $\Bf,\bg \in E_u^*$ be any two paths which are not subpaths of each other, that is $\Bf \not\subset \bg$ and $\bg \not\subset \Bf$.

Then
\begin{equation*}
\textup{(a) } \quad S_{\Bf}(U_{t(\Bf)})\cap S_{\bg}(U_{t(\bg)})= \emptyset,
\end{equation*}
\begin{equation*}
\textup{(b) } \quad S_{\Bf}(F_{t(\Bf)})\cap S_{\bg}(U_{t(\bg)})= \emptyset.
\end{equation*}
\end{lem}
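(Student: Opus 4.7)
The plan is to prove both parts by a \emph{longest-common-prefix} decomposition. Since neither path is a subpath of the other, in particular neither is a prefix of the other, so we may write $\Bf=\bs e\Bf'$ and $\bg=\bs f\bg'$, where $\bs$ is the longest common prefix, $w=t(\bs)$ is the common vertex, and $e,f\in E_w^1$ are distinct edges (here $\Bf'$ or $\bg'$ may be the empty path). The similarity $S_\bs$ is injective, so both statements reduce to the ``one-step'' situation at vertex $w$.

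For part (a), I would iterate the OSC containment clause $S_\epsilon(U_{t(\epsilon)})\subset U_{i(\epsilon)}$ along $\Bf'$ to get $S_{\Bf'}(U_{t(\Bf)})\subset U_{t(e)}$, and hence
\[
S_\Bf(U_{t(\Bf)})\subset S_\bs\bigl(S_e(U_{t(e)})\bigr),\qquad S_\bg(U_{t(\bg)})\subset S_\bs\bigl(S_f(U_{t(f)})\bigr).
\]
The OSC disjointness at $w$ gives $S_e(U_{t(e)})\cap S_f(U_{t(f)})=\emptyset$, and applying the injection $S_\bs$ finishes (a).

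For part (b) the same decomposition, together with the $F$-analogue $S_{\Bf'}(F_{t(\Bf)})\subset F_{t(e)}$ coming from iterating the invariance equation \eqref{Invariance 1}, reduces the claim to showing $S_e(F_{t(e)})\cap S_f(U_{t(f)})=\emptyset$ whenever $e\neq f$ are edges of $E_w^1$. This is the genuine technical step and the main obstacle, because $F_{t(e)}$ need not lie inside $U_{t(e)}$, so one cannot simply invoke OSC disjointness. I would argue by contradiction: if $x\in S_e(F_{t(e)})\cap S_f(U_{t(f)})$, write $x=S_e(y)$ with $y\in F_{t(e)}$; since $S_f(U_{t(f)})$ is open, fix $\varepsilon>0$ with $B(x,\varepsilon)\subset S_f(U_{t(f)})$. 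Iterating invariance $k$ times produces $\be_k\in E_{t(e)}^k$ with $y\in S_{\be_k}(F_{t(\be_k)})$, so $x\in S_{e\be_k}(F_{t(\be_k)})$, a set of diameter at most $r_e r_{\max}^k d_{\max}\to 0$; hence for $k$ large enough,
\[
S_{e\be_k}(F_{t(\be_k)})\subset B(x,\varepsilon)\subset S_f(U_{t(f)}).
\]

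To produce the contradiction I would now invoke the SOSC, which is equivalent to the OSC under contracting similarities (Subsection \ref{separation conditions}), to pick a point $a\in F_{t(\be_k)}\cap U_{t(\be_k)}$. Then $S_{e\be_k}(a)\in S_f(U_{t(f)})$ by the inclusion above, and simultaneously $S_{e\be_k}(a)\in S_e\bigl(S_{\be_k}(U_{t(\be_k)})\bigr)\subset S_e(U_{t(e)})$ by iterated OSC containment. This places a point in $S_e(U_{t(e)})\cap S_f(U_{t(f)})$, contradicting the OSC disjointness at $w$, and proves (b). The whole argument is a clean but careful combination of the prefix-tree structure of $E_u^*$, injectivity of the prefix similarity, the shrinking-diameter property of iterates of the invariance, and the strong form of the OSC supplied by Wang's theorem.
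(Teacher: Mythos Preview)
Your argument is correct. The paper itself does not give a proof here but defers to \cite[Lemma~4.7.6]{phdthesis_Boore}, so there is no in-text proof to compare against; your longest-common-prefix reduction followed by the SOSC contradiction for part~(b) is a standard and complete way to establish the result.

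One remark on part~(b): the appeal to the SOSC (via Wang's theorem) is heavier than necessary. Once you have reduced to showing $S_e(F_{t(e)})\cap S_f(U_{t(f)})=\emptyset$ for distinct $e,f\in E_w^1$, you can instead use the elementary fact $F_{t(e)}\subset\overline{U_{t(e)}}$ (this is Lemma~\ref{closure U_u} in the paper, whose proof is independent of the present lemma). Then $S_e(F_{t(e)})\subset S_e(\overline{U_{t(e)}})\subset\overline{S_e(U_{t(e)})}$ by continuity, so any point of $S_e(F_{t(e)})$ lying in the open set $S_f(U_{t(f)})$ would force $S_e(U_{t(e)})\cap S_f(U_{t(f)})\neq\emptyset$, contradicting the OSC directly. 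Your SOSC argument is in effect a localised rederivation of this closure inclusion, so the two routes are close cousins; the closure route is shorter and avoids invoking the OSC\,$\Leftrightarrow$\,SOSC equivalence.
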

\begin{proof}See \cite[Lemma 4.7.6]{phdthesis_Boore}. 
\end{proof}

\begin{lem}
\label{OSCe}
Let $\bigl(V,E^*,i,t,r,p,((\mathbb{R}^m,\left| \ \  \right|))_{v \in V},(S_e)_{e \in E^1}\bigr)$ be an $n$-vertex IFS with probabilities which satisfies the OSC. Then for each vertex $ v \in V$ and all finite paths $\be \in E_v^*$
\begin{equation*}
\mu_v\left(S_{\be}(F_{t(\be)})\right)=p_\be.
\end{equation*}
\end{lem}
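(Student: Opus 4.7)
My plan is to prove Lemma \ref{OSCe} in two stages: first establish that $\mu_v(F_v \setminus U_v) = 0$ for every $v \in V$, and then use this, together with Lemma \ref{OSCintersection}, to evaluate $\mu_v(S_{\be}(F_{t(\be)}))$ by iterating the invariance equation \eqref{Invariance 2}.

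For the first stage, let $M = \max_{v \in V}\mu_v(F_v \setminus U_v)$ and pick $v^* \in V$ where the maximum is attained. Since the OSC is equivalent to the SOSC for contracting similarities, we have the family of paths $(\bl_v)_{v \in V}$ of common length $l$ satisfying \eqref{l_u}, so in particular $S_{\bl_{v^*}}(F_{t(\bl_{v^*})}) \subset U_{v^*}$, hence $S_{\bl_{v^*}}(F_{t(\bl_{v^*})}) \cap (F_{v^*} \setminus U_{v^*}) = \emptyset$. Iterating \eqref{Invariance 2} $l$ times gives
\begin{equation*}
\mu_{v^*}(F_{v^*} \setminus U_{v^*}) = \sum_{\be \in E_{v^*}^l} p_{\be}\, \mu_{t(\be)}\bigl(S_{\be}^{-1}(F_{v^*} \setminus U_{v^*})\bigr).
\end{equation*}
For $\be = \bl_{v^*}$ the summand vanishes by the disjointness just noted. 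For every other $\be \in E_{v^*}^l$, iterating the OSC inclusions $S_e(U_{t(e)}) \subset U_{i(e)}$ yields $S_{\be}(U_{t(\be)}) \subset U_{v^*}$, so $U_{t(\be)} \subset S_{\be}^{-1}(U_{v^*})$, and therefore
\begin{equation*}
\mu_{t(\be)}\bigl(S_{\be}^{-1}(F_{v^*} \setminus U_{v^*})\bigr) \leq \mu_{t(\be)}(F_{t(\be)} \setminus U_{t(\be)}) \leq M.
\end{equation*}
Combining these bounds gives $M \leq (1 - p_{\bl_{v^*}})M$, and since $p_{\bl_{v^*}} > 0$ this forces $M = 0$.

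For the second stage, fix $v \in V$ and $\be \in E_v^*$ with $k = \left|\be\right| \geq 1$ (the case $k=0$ is just $\mu_v(F_v)=1$). Iterating \eqref{Invariance 2} $k$ times gives
\begin{equation*}
\mu_v(S_{\be}(F_{t(\be)})) = \sum_{\bg \in E_v^k} p_{\bg}\, \mu_{t(\bg)}\bigl(S_{\bg}^{-1}(S_{\be}(F_{t(\be)}))\bigr).
\end{equation*}
The $\bg = \be$ term equals $p_{\be}\mu_{t(\be)}(F_{t(\be)}) = p_{\be}$. For any other $\bg \in E_v^k$, the paths $\bg$ and $\be$ have equal length $k$ and are distinct, so neither is a subpath of the other; Lemma \ref{OSCintersection}(b) then yields $S_{\be}(F_{t(\be)}) \cap S_{\bg}(U_{t(\bg)}) = \emptyset$, and hence $S_{\bg}^{-1}(S_{\be}(F_{t(\be)})) \cap U_{t(\bg)} = \emptyset$. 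Using the first stage to discard the complement of $U_{t(\bg)}$,
\begin{equation*}
\mu_{t(\bg)}\bigl(S_{\bg}^{-1}(S_{\be}(F_{t(\be)}))\bigr) = \mu_{t(\bg)}\bigl(S_{\bg}^{-1}(S_{\be}(F_{t(\be)})) \cap U_{t(\bg)}\bigr) = 0,
\end{equation*}
so only the $\bg = \be$ term survives, giving $\mu_v(S_{\be}(F_{t(\be)})) = p_{\be}$.

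The main obstacle is the first stage: the zero-measure-on-the-boundary estimate $\mu_v(F_v \setminus U_v) = 0$ is exactly where the OSC/SOSC equivalence enters nontrivially, and the argument must be run simultaneously at all vertices by taking the maximum, since otherwise each bound is merely in terms of the measures at adjacent vertices. Once that is in hand, the iteration step is essentially bookkeeping via Lemma \ref{OSCintersection}.
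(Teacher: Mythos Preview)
Your proof is correct and follows the same two-stage architecture as the paper: first show $\mu_v(F_v\setminus U_v)=0$ for all $v$, then iterate the invariance relation and use Lemma~\ref{OSCintersection}(b) to kill the cross terms. Stage~2 is essentially identical to the paper's.

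Where you genuinely differ is in Stage~1. The paper picks the vertex $u$ at which $\mu_u(U_u)$ is \emph{minimal}, shows via the disjoint decomposition $\bigcup_{\Bf\in E_u^l}S_\Bf(U_{t(\Bf)})\subset U_u$ that this containment has full $\mu_u$-measure, deduces $\mu_{t(\bl_u)}(F_{t(\bl_u)}\setminus U_{t(\bl_u)})=0$ for the single terminal vertex $t(\bl_u)$, and then has to invoke strong connectivity of the graph to propagate this to every vertex. Your argument instead takes the \emph{maximum} $M=\max_v\mu_v(F_v\setminus U_v)$, feeds $F_{v^*}\setminus U_{v^*}$ directly into the iterated invariance equation, drops the $\bl_{v^*}$ term via~\eqref{l_u}, and bounds every remaining term by $M$ using only the OSC nesting $S_\be(U_{t(\be)})\subset U_{v^*}$. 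The resulting inequality $M\le(1-p_{\bl_{v^*}})M$ forces $M=0$ at once for all vertices, with no separate connectivity step. This is shorter and more transparent than the paper's route; the price is nil, since both arguments already rely on the SOSC paths $(\bl_v)_{v\in V}$ of~\eqref{l_u}.
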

\begin{proof}
A proof for the $1$-vertex case is given in \cite{Paper_Graf}. 

For any $k \in \mathbb{N}$, Equation (\ref{Invariance 2}) may be iterated $k$ times to obtain
\begin{equation}
\label{mp0}
\mu_v(A_v)= \sum_{ \substack{ \Bf\in E_v^k}  }p_{\Bf} \mu_{t(\Bf)}\left(S_{\Bf}^{-1}(A_v)\right).
\end{equation}

The first step in the proof is to prove that for each $v \in V$, $\mu_v(F_v\setminus U_v)=0$, where $(U_v)_{v \in V}$ are the open sets of the OSC/SOSC. As described in Subsection \ref{matrix B} there exists a family of paths $(\bl_v)_{v \in V}$, all of the same length $l=\left|\bl_v\right|$, where $l\in \mathbb{N}$ may be chosen as large as we like, such that for each $v \in V$, $S_{\bl_v}(F_{t(\bl_v)})\subset U_v$, see Equation \eqref{l_u}.

It is clear that $\mu_v(U_v)>0$, for each $v \in V$, since by Equations (\ref{mp0}) and \eqref{l_u} we obtain
\begin{gather*}
0<p_{\bl_v}=p_{\bl_v} \mu_{t(\bl_v)}(F_{t(\bl_v)}) \\
 \leq \sum_{ \substack{ \Bf\in E_v^l}  }p_{\Bf} \mu_{t(\Bf)}\left(S_{\Bf}^{-1}(S_{\bl_v}(F_{t(\bl_v)}))\right)=\mu_v\left(S_{\bl_v}(F_{t(\bl_v)})\right) \leq \mu_v(U_v).
\end{gather*}   
We may now choose a vertex $u$ such that the open set $U_u$ is of minimal measure with
\begin{equation}
\label{mp3}
\mu_u(U_u)\leq \mu_v(U_v)
\end{equation}
for all $v \in V$. From the definition of the OSC,
\begin{equation}
\label{mp4}
\bigcup_{\Bf \in E_u^l}S_{\Bf}(U_{t(\Bf)})\subset U_u,
\end{equation} 
where the union on the left hand side is disjoint by Lemma \ref{OSCintersection}(a). This implies
\begin{align*}
\mu_u\left(U_u\right)&\geq \mu_u\biggl( \ \bigcup_{\Bf \in E_u^l}S_{\Bf}(U_{t(\Bf)}) \ \biggr) && \\
&= \sum_{\Bf \in E_u^l} \mu_u\left(S_{\Bf}(U_{t(\Bf)})\right) && \textrm{(the union is disjoint)}\\
&= \sum_{\Bf \in E_u^l} \biggl( \ \sum_{ \substack{ \bg\in E_u^l}  }p_{\bg} \mu_{t(\bg)}\left(S_{\bg}^{-1}(S_{\Bf}(U_{t(\Bf)}))\right) \ \biggr) && \textrm{(by Equation (\ref{mp0}))}\\
&\geq \sum_{\Bf \in E_u^l} p_{\Bf} \mu_{t(\Bf)}(U_{t(\Bf)}) && \\
&\geq \mu_{u}(U_u)\sum_{\Bf \in E_u^l} p_{\Bf}  && \textrm{(by Inequality (\ref{mp3}))} \\
&= \mu_{u}(U_u)  && \textrm{(by Equation (\ref{probability function}))}, 
\end{align*}
and so
\begin{equation}
\label{mp5}
\mu_u(U_u)= \mu_u\biggl( \ \bigcup_{\Bf \in E_u^l}S_{\Bf}(U_{t(\Bf)}) \ \biggr).
\end{equation}
Now 
\begin{align*}
\mu_u \bigl( S_{\bl_u}(F_{t(\bl_u)})&\setminus S_{\bl_u}(U_{t(\bl_u)}) \bigr) && \\
&=\mu_u\biggl(S_{\bl_u}(F_{t(\bl_u)})\setminus \biggl( \ \bigcup_{\Bf \in E_u^l}S_{\Bf}(U_{t(\Bf)}) \ \biggr)\biggr) && \textrm{(by Lemma \ref{OSCintersection}(b))} \\
&\leq \mu_u\biggl(U_u\setminus \biggl( \ \bigcup_{\Bf \in E_u^l}S_{\Bf}(U_{t(\Bf)}) \ \biggr)\biggr) && \textrm{(by (\ref{l_u}))} \\
&= \mu_u(U_u)-\mu_u \biggl( \ \bigcup_{\Bf \in E_u^l}S_{\Bf}(U_{t(\Bf)}) \ \biggr) && \textrm{(by (\ref{mp4}))} \\
&= 0 && \textrm{(by (\ref{mp5}))}.
\end{align*} 
That is 
\begin{align*}
0 &= \mu_u\bigl(S_{\bl_u}(F_{t(\bl_u)})\setminus S_{\bl_u}(U_{t(\bl_u)})\bigr) && \\
&= \mu_u\bigl(S_{\bl_u}(F_{t(\bl_u)}\setminus U_{t(\bl_u)})\bigr) && \\
&= \sum_{ \substack{ \Bf\in E_u^l}  }p_{\Bf} \mu_{t(\Bf)}\left(S_{\Bf}^{-1}(S_{\bl_u}(F_{t(\bl_u)}\setminus U_{t(\bl_u)}))\right) && \textrm{ (by (\ref{mp0}))}\\
&\geq p_{\bl_u} \mu_{t(\bl_u)}\left(F_{t(\bl_u)}\setminus U_{t(\bl_u)}\right). && 
\end{align*}
This proves $\mu_{t(\bl_u)}\left(F_{t(\bl_u)}\setminus U_{t(\bl_u)}\right)=0$. The path $\bl_u$ can always be extended to a path $\bl_u^{\prime} \in E_u^{l^{\prime}}$, for some $l^{\prime}>l$, with $t(\bl_u^{\prime})=v$ for any vertex $v \in V$. This is because the graph is strongly connected, so a path $\bg$ can always be found with $i(\bg)=t(\bl_u)$ and $t(\bg)=v$, for any vertex $v \in V$. So we may put $\bl_u^{\prime}=\bl_u\bg$ and Equation (\ref{l_u}) becomes 
\begin{equation*}
S_{\bl_u^{\prime}}(F_{t(\bl_u^{\prime})})\subset U_u.
\end{equation*}
Repeating the argument above with $\bl_u$ replaced by $\bl_u^{\prime}$ and  $l$ replaced by $l^{\prime}$, proves that $\mu_{t(\bl_u^{\prime})}\left(F_{t(\bl_u^{\prime})}\setminus U_{t(\bl_u^{\prime})}\right)=0$, and as $t(\bl_u^{\prime})=v$, this proves that for each $v \in V$,
\begin{equation}
\label{mp6}
\mu_v\left(F_v \setminus U_v\right)=0.
\end{equation} 

For any vertex $v \in V$, and any path $\be \in E_v^*$,  $\be \in E_v^k$ where $k = \left|\be\right|$. Let $\Bf,\bg \in E_v^k$, $\Bf \neq \bg$. By Lemma \ref{OSCintersection}(b), $S_{\Bf}(U_{t(\Bf)})\cap S_{\bg}(F_{t(\bg)})= \emptyset$, so that
\begin{equation}
\label{mp7}
S_{\Bf}^{-1}\left(S_{\Bf}(U_{t(\Bf)})\cap S_{\bg}(F_{t(\bg)})\right)=U_{t(\Bf)}\cap S_{\Bf}^{-1}\left(S_{\bg}(F_{t(\bg)})\right)=\emptyset.
\end{equation}
This means that, since $\supp \mu_{t(\Bf)} = F_{t(\Bf)}$,
\begin{gather*}
\mu_{t(\Bf)}\left(S_{\Bf}^{-1}\left(S_{\bg}(F_{t(\bg)})\right)\right)=\mu_{t(\Bf)}\left(F_{t(\Bf)}\cap S_{\Bf}^{-1}\left(S_{\bg}(F_{t(\bg)})\right)\right) \\
=\mu_{t(\Bf)}\left(U_{t(\Bf)}\cap S_{\Bf}^{-1}\left(S_{\bg}(F_{t(\bg)})\right)\right)+\mu_{t(\Bf)}\left(\left(F_{t(\Bf)}\setminus U_{t(\Bf)}\right)\cap S_{\Bf}^{-1}\left(S_{\bg}(F_{t(\bg)})\right)\right)  =0,
\end{gather*}
by (\ref{mp7}) and (\ref{mp6}). That is for $\Bf,\bg \in E_v^k$, $\Bf \neq \bg$,
\begin{equation}
\label{mp8}
\mu_{t(\Bf)}\left(S_{\Bf}^{-1}\left(S_{\bg}(F_{t(\bg)})\right)\right)=0.
\end{equation}
For any vertex $v \in V$, and any path $\be \in E_v^*$,  $\be \in E_v^k$ where $k = \left|\be\right|$, and by  Equation (\ref{mp0}) we obtain
\begin{align*}
\mu_v\left(S_{\be}(F_{t(\be)})\right)&= \sum_{ \substack{ \Bf\in E_v^k}  }p_{\Bf} \mu_{t(\Bf)}\left(S_{\Bf}^{-1}(S_{\be}(F_{t(\be)}))\right) && \\
&=p_{\be} \mu_{t(\be)}\left(F_{t(\be)}\right) && \textrm{(by (\ref{mp8}))} \\
&=p_{\be} && \textrm{(as $\mu_{t(\be)}(F_{t(\be)})=1$)}.
\qedhere
\end{align*}
\end{proof}

The statement of the next lemma is a simple adaptation of a lemma in \cite[Subsection 5.3]{Paper_Hutchinson}, and \cite[Lemma 9.2]{Book_KJF2}. 
\begin{lem}
\label{OSCf}
Let $r, c_1, c_2 > 0$, and let $\left\{V_i\right\}$ be subsets of $\mathbb{R}^m$. Suppose each set $V_i$ contains a closed ball $B_i$ of radius $c_1r$ and is contained in a closed ball of radius $c_2r$, and that $\left\{B_i\right\}$ is a disjoint set. Then for any $x \in \mathbb{R}^m$,
\begin{equation*}
\# \left\{i : \overline{V}_i \cap B(x,r) \neq \emptyset \right\} \leq \left(\frac{1+2c_2}{c_1}\right)^m.
\end{equation*}
\end{lem}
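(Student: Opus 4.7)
My plan is to use a standard volume-comparison argument: to bound the number of sets $\overline{V}_i$ that can meet $B(x,r)$, I will confine all the disjoint inner balls $B_i$ inside a single slightly enlarged ball centred at $x$, and then compare $m$-dimensional Lebesgue volumes.

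First I would fix $i$ with $\overline{V}_i \cap B(x,r) \neq \emptyset$ and let $y_i$ be the centre of the closed ball of radius $c_2 r$ containing $V_i$ (hence also containing $\overline{V}_i$). Picking a point $z \in \overline{V}_i \cap B(x,r)$, the triangle inequality gives $|y_i - x| \leq |y_i - z| + |z - x| \leq c_2 r + r$. Since $B_i \subset V_i \subset B(y_i, c_2 r)$, any $w \in B_i$ satisfies
\begin{equation*}
|w - x| \leq |w - y_i| + |y_i - x| \leq c_2 r + (1 + c_2) r = (1 + 2 c_2) r,
\end{equation*}
so $B_i \subset B(x, (1+2c_2)r)$ for every such $i$.

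Now let $\mathcal{L}^m$ denote $m$-dimensional Lebesgue measure and write $\omega_m = \mathcal{L}^m(B(0,1))$. Because the family $\{B_i\}$ is disjoint and each $B_i$ has radius $c_1 r$, the collection of indices $I = \{ i : \overline{V}_i \cap B(x,r) \neq \emptyset \}$ satisfies
\begin{equation*}
\# I \cdot \omega_m (c_1 r)^m \;=\; \sum_{i \in I} \mathcal{L}^m(B_i) \;\leq\; \mathcal{L}^m\bigl(B(x,(1+2c_2)r)\bigr) \;=\; \omega_m \bigl((1+2c_2) r\bigr)^m.
\end{equation*}
Dividing through by $\omega_m (c_1 r)^m$ yields $\# I \leq ((1+2c_2)/c_1)^m$, which is the desired bound.

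There is essentially no obstacle here: the argument is elementary and self-contained, with the only mildly delicate point being the observation that the enclosing ball of radius $c_2 r$ is closed, so that the inclusion $\overline{V}_i \subset B(y_i, c_2 r)$ survives taking closure and the triangle-inequality bound on $|y_i - x|$ goes through with $\overline{V}_i$ rather than $V_i$.
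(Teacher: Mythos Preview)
Your proof is correct and is exactly the standard volume-comparison argument; the paper does not give its own proof but simply cites \cite[Subsection~5.3]{Paper_Hutchinson} and \cite[Lemma~9.2]{Book_KJF2}, where precisely this argument appears. There is nothing to add.
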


\begin{lem}
\label{OSCg}
Let $p \in \mathbb{R}$, let $a_i\geq 0$ for $1 \leq i \leq m$, and let $m \leq C$. Then
\begin{equation*}
\biggl(\sum_{i = 1}^m a_i\biggr)^p \leq \max \left\{1, C^{p-1}\right\}\sum_{i = 1}^m a_i^p.
\end{equation*} 
\end{lem}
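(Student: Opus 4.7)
The plan is to split the argument on the sign of $p-1$, since the right-hand constant $\max\{1,C^{p-1}\}$ changes form accordingly. First I would note that $C \geq m \geq 1$ (because at least one term is present), so $C^{p-1} \geq 1$ exactly when $p \geq 1$; thus $\max\{1,C^{p-1}\} = C^{p-1}$ for $p \geq 1$ and $\max\{1,C^{p-1}\}=1$ for $p \leq 1$. The inequality then reduces to three essentially routine sub-claims.

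For $p \geq 1$ I would apply the power-mean inequality (equivalently, Jensen's inequality applied to the convex function $x\mapsto x^p$ on $[0,\infty)$) to obtain
\begin{equation*}
\biggl(\frac{1}{m}\sum_{i=1}^m a_i\biggr)^{\!p} \;\leq\; \frac{1}{m}\sum_{i=1}^m a_i^p,
\end{equation*}
which rearranges to $(\sum a_i)^p \leq m^{p-1}\sum a_i^p$; since $m \leq C$ and $p-1 \geq 0$ this is bounded by $C^{p-1}\sum a_i^p$.

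For $0 \leq p \leq 1$ I would use the elementary subadditivity $(x+y)^p \leq x^p + y^p$ for $x,y \geq 0$ (immediate from the concavity of $x\mapsto x^p$ and the fact that the function vanishes at $0$), iterated over the $m$ terms, to give $(\sum a_i)^p \leq \sum a_i^p$; combined with $\max\{1,C^{p-1}\}=1$ this closes the case. The case $p=0$ is the trivial statement $1 \leq m$.

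For $p < 0$ the bound is essentially free: if some $a_i = 0$ then $\sum a_i^p = +\infty$ and the inequality holds trivially, so I may assume all $a_i > 0$. Setting $S=\sum a_i$ I have $a_i \leq S$, hence $a_i^p \geq S^p$ (the exponent is negative), and summing gives $\sum a_i^p \geq mS^p \geq S^p$ since $m \geq 1$. There is no genuine obstacle here; the only mild care needed is the bookkeeping of the three regimes and the handling of the possibility that some $a_i$ vanishes when $p < 0$.
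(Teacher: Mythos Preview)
Your proof is correct and follows essentially the same case split as the paper: the paper invokes H\"older's inequality for $p>1$ (equivalent to your Jensen/power-mean argument) and Minkowski-type subadditivity for $0<p<1$, then asserts without detail that the inequality also holds for $p\leq 0$. Your treatment of the $p<0$ case is more explicit than the paper's, which simply says ``this also clearly holds''.
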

\begin{proof} Minkowski's inequality, for $0<p<1$, can be used to show
\begin{equation*}
\biggl(\sum_{i = 1}^m a_i\biggr)^p \leq \sum_{i = 1}^m a_i^p,
\end{equation*}
and as this also clearly holds for $p\leq 0$ and $p=1$, the inequality holds for  $p \leq 1$. 

For $1 < p$, H$\ddot{\textrm{o}}$lder's inequality can be used to show
\begin{equation*}
\biggl(\sum_{i = 1}^m a_i\biggr)^p \leq m^{p-1}\sum_{i = 1}^m a_i^p\leq C^{p-1}\sum_{i = 1}^m a_i^p.
\qedhere
\end{equation*}
\end{proof}

We take $(\bl_v)_{v \in V}$ to be the fixed list of paths used in the definition of the non-negative matrix $\bB(q, \gamma, l)$, in Subsection \ref{matrix B} with $S_{\bl_v}(F_{t(\bl_v)}) \subset U_v$, as given in Equation \eqref{l_u}. The associated list of  positive constants $(c_v)_{v \in V}$ are as defined in Subsection \ref{deltadef}, Equation \eqref{l_2}, where 
\begin{equation*}
%\label{l_2}
c_v = \dist\left(S_{\bl_v}(F_{t(\bl_v)}), \, \mathbb{R}^m \setminus U_v\right) > 0.
\end{equation*}
By Lemma \ref{OSCintersection}(b), for all finite paths $\be, \bg \in E_u^*$, with 
\begin{equation*}
\be = e_1\cdots e_k, \ \bg= g_1\cdots g_j, \ \textrm{and} \ e_1 \neq g_1,
\end{equation*}
\begin{equation}
\label{l_4}
S_{\be}\left(F_{t(\be)}) \cap S_{\bg}(U_{t(\bg)}\right) = \emptyset.
\end{equation}

\begin{lem}
\label{OSCd}
Let $(\bl_v)_{v \in V}$ be the list of paths defined  in Equation (\ref{l_u}) and Subsection \ref{matrix B}, and let $(c_v)_{v \in V}$ be the associated list of positive constants defined in Equation (\ref{l_2}). Let $\be, \bg \in E_u^*$ be any finite paths with
\begin{equation*}
\be = e_1\cdots e_k, \ \bg= g_1\cdots g_j,  \ \textrm{and} \ e_1 \neq g_1,
\end{equation*} 
and suppose
\begin{equation*}
\dist\left(S_{\be}(F_{t(\be)}), S_{\bg}(F_{t(\bg)})\right) \leq c_w r_{\bg}
\end{equation*}
for some vertex $w \in V$. 

Then $\bl_w$ is not a subpath of $g_2\cdots g_j$, that is $\bl_w \not\subset g_2\cdots g_j$.
\end{lem}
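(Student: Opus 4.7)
My plan is to argue by contradiction. Suppose that $\bl_w \subset g_2\cdots g_j$, so that we can decompose $\bg = g_1\bs\bl_w\bt$ for some (possibly empty) paths $\bs,\bt \in E^*$, where necessarily $t(g_1\bs) = i(\bl_w) = w$. The strategy is to use this decomposition to exhibit a lower bound on $\dist(S_{\be}(F_{t(\be)}), S_{\bg}(F_{t(\bg)}))$ strictly larger than $c_w r_{\bg}$, contradicting the hypothesis.

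First I would establish a chain of inclusions that sandwiches $S_{\bg}(F_{t(\bg)})$ deep inside $S_{g_1}(U_{t(g_1)})$. Using the invariance equation and Equation (\ref{l_u}), together with the OSC property that $S_{\bs}(U_w)\subset U_{t(g_1)}$ (obtained by iterating the OSC containment), I would obtain
\begin{equation*}
S_{\bg}(F_{t(\bg)}) \;\subset\; S_{g_1\bs}\bigl(S_{\bl_w}(F_{t(\bl_w)})\bigr) \;\subset\; S_{g_1\bs}(U_w) \;\subset\; S_{g_1}(U_{t(g_1)}).
\end{equation*}
Since $\be$ and $\bg$ have distinct first edges, Equation (\ref{l_4}) (i.e.\ Lemma \ref{OSCintersection}(b) applied to $\be$ and the one-edge path $g_1$) gives $S_{\be}(F_{t(\be)}) \cap S_{g_1}(U_{t(g_1)}) = \emptyset$, and therefore $S_{\be}(F_{t(\be)}) \subset \mathbb{R}^m \setminus S_{g_1\bs}(U_w)$.

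Next I would transport the defining distance $c_w = \dist(S_{\bl_w}(F_{t(\bl_w)}),\, \mathbb{R}^m\setminus U_w)$ forward under the similarity $S_{g_1\bs}$, which has ratio $r_{g_1\bs}$ and maps $\mathbb{R}^m$ onto $\mathbb{R}^m$:
\begin{equation*}
\dist\bigl(S_{g_1\bs}(S_{\bl_w}(F_{t(\bl_w)})),\; \mathbb{R}^m\setminus S_{g_1\bs}(U_w)\bigr) \;=\; r_{g_1\bs}\,c_w.
\end{equation*}
Combining this with the two inclusions above yields
\begin{equation*}
\dist(S_{\be}(F_{t(\be)}), S_{\bg}(F_{t(\bg)})) \;\geq\; r_{g_1\bs}\,c_w.
\end{equation*}

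Finally, comparing with the hypothesis $\dist(S_{\be}(F_{t(\be)}), S_{\bg}(F_{t(\bg)})) \leq c_w r_{\bg}$ and using the factorisation $r_{\bg} = r_{g_1\bs}\,r_{\bl_w}\,r_{\bt}$, I would cancel $c_w r_{g_1\bs}>0$ to obtain $1 \leq r_{\bl_w}\,r_{\bt}$. Since $r_{\bl_w}<1$ and $r_{\bt}\leq 1$ (with $r_{\bt}=1$ iff $\bt$ is empty), this is impossible. The main subtlety, though not really an obstacle, is being careful with the edge cases where $\bs$ or $\bt$ is the empty path, and with the iterative application of the OSC to justify $S_{\bs}(U_w)\subset U_{t(g_1)}$; both go through unchanged.
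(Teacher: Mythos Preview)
Your proof is correct and follows essentially the same contradiction argument as the paper. The only cosmetic differences are that the paper writes the decomposition as $\bg=\bs\bl_w\bt$ with $\bs\neq\emptyset$ (so your $g_1\bs$ is its $\bs$), applies Equation~(\ref{l_4}) directly to $\be$ and this $\bs$ rather than going via the single edge $g_1$ and the containment $S_{g_1\bs}(U_w)\subset S_{g_1}(U_{t(g_1)})$, and phrases the final contradiction as the strict inequality $r_{\bs}c_w>r_{\bg}c_w$ instead of cancelling to $1\leq r_{\bl_w}r_{\bt}$.
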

 
\begin{proof}
For a contradiction we assume $\bl_w$ is a subpath of $g_2\cdots g_j$ so that $\bg = \bs \bl_w \bt$ where $\bs \neq \emptyset$,  $i(\bs) = u$, $t(\bs) = i(\bl_w) = w$ and $t(\bl_w) = i(\bt)$. Clearly $S_{\bg}(F_{t(\bg)})=S_{\bs \bl_w \bt}(F_{t(\bt)})\subset S_{\bs \bl_w }(F_{t(\bl_w)})\subset S_{\bs}(U_w)$, by (\ref{l_u}). This implies that
\begin{align*}
\dist\left(S_{\bg}(F_{t(\bg)}), \mathbb{R}^m \setminus S_{\bs}(U_w)\right) &\geq \textrm{dist}\left(S_{\bs \bl_w }(F_{t(\bl_w)}), \mathbb{R}^m \setminus S_{\bs}(U_w)\right) && \\
&= r_{\bs}c_w && (\textrm{by (\ref{l_2})})  \\
&>r_{\bg}c_w &&  (\textrm{as $\bs \subset \bg$}). 
\end{align*}
By assumption $s_1 = g_1 \neq e_1$ and also $t(\bs)=w$ so that $S_{\be}(F_{t(\be)})\cap S_{\bs}(U_w)=\emptyset$ by (\ref{l_4}). This means that $S_{\be}(F_{t(\be)})\subset \mathbb{R}^m \setminus S_{\bs}(U_w)$ and so
\begin{align*}
\textrm{dist}\left(S_{\bg}(F_{t(\bg)}), S_{\be}(F_{t(\be)})\right) &\geq \textrm{dist}\left(S_{\bg}(F_{t(\bg)}), \mathbb{R}^m \setminus S_{\bs}(U_w)\right) \\
&>c_wr_{\bg}.
\end{align*}
This is the required contradiction.
\end{proof}

For $\be \in E_u^*$, $\be \vert_{\left|\be\right|-1}$ is the finite path obtained by deleting the last edge of $\be$. For $r>0$ let 
\begin{equation}
\label{E_u*r}
E_u^*\left(r\right) = \left\{\be \in E_u^*: r_{\be}\left|F_{t(\be)}\right|<r\leq  r_{\be \vert_{\left|\be\right|-1}}\left|F_{t(\be \vert_{\left|\be\right|-1})}\right|\right\}
\end{equation} 
We make the following observations about the set of finite paths $E_u^*\left(r\right)$.
\begin{itemize}
\item For paths $\be \in E_u^*\left(r\right)$, the sets $S_{\be}(F_{t(\be)}) \subset F_u$ are all roughly of diameter $r$ since $\left|S_{\be}(F_{t(\be)})\right|=r_{\be}\left|F_{t(\be)}\right|$.
\item It can be shown, using Lemma \ref{surjection phi_u}, that 
\begin{equation*}
F_u=\bigcup_{\be \in E_u^*(r)}S_{\be}\left(F_{t(\be)}\right).
\end{equation*}
\item If $(U_v)_{v \in V}$ are the open sets of the OSC, then the sets $\left\{S_{\be}(U_{t(\be)}):\be \in E_u^*\left(r\right) \right\}$ are disjoint open sets. This follows from the definition of the OSC and Lemma \ref{OSCintersection}(a), using the fact that $\be,\bg \in E_u^*\left(r\right)$, $\be \neq \bg$, implies $\be \not\subset \bg$ and $\bg \not\subset \be$.
\end{itemize}

We remind the reader that, as described in Subsection \ref{deltadef}, Inequality \eqref{N}, we choose $N \in \mathbb{N}$ large enough so that
\begin{equation*}
%\label{N}
\frac{2d_{\max}}{c_{\min}} \leq \frac{1}{r_{\max}^{N-1}},
\end{equation*}
and for such $N$,  as given in Equation \eqref{delta}, $\delta$ is then defined as
\begin{equation*}
%\label{delta}
\delta=r_{\min}^{N+l+1}d_{\min}.
\end{equation*} 
Also for a given $r$, $H_{e,f}$ is an $r$-separated subset of $S_e(F_{t(e)})\cap S_{f}(F_{t(f)})(r)$, where the edges $e,f \in E_u^1$, are taken as fixed with $e \neq f$.
\begin{lem}
\label{OSCh}
Let $r \in (0,\delta)$, let $x \in H_{e,f}$, let $(\bl_v)_{v \in V}$ be the list of paths defined  in Equation (\ref{l_u}) and Subsection \ref{matrix B}, let $N$ be as defined in Equation (\ref{N}), and let $\be=e_1\ldots e_{\left|\be\right|} \in E_u^*\left(r\right)$ be such that $\dist(x, S_{\be}(F_{t(\be)}))\leq r$.

Then 
\begin{equation*}
\bl_v \not\subset e_2\ldots e_{\left|\be\right|-N},
\end{equation*}
for all $v \in V$.
\end{lem}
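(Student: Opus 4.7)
The plan is to apply Lemma \ref{OSCd} to a truncated prefix of $\be$ together with a single-edge path drawn from $\{e, f\}$. Set $\bg = \be \vert_{|\be|-N} = e_1 e_2 \cdots e_{|\be|-N}$. I first check $\bg$ is well defined with positive length: from $\be \in E_u^*(r)$ one has $r_{\be} < r/d_{\min}$, and since $r < \delta = r_{\min}^{N+l+1} d_{\min}$ this gives $r_{\be} < r_{\min}^{N+l+1}$; combined with $r_{\be} \geq r_{\min}^{|\be|}$ we obtain $|\be| > N+l+1$, so $\bg$ has length at least $l+2$. Since $e \neq f$ both lie in $E_u^1$, at least one of them, call it $e_0$, satisfies $e_0 \neq e_1$; in Lemma \ref{OSCd}'s notation I play the role of ``$\be$'' with the length-one path $e_0$ and the role of ``$\bg$'' with the path $\bg$ just defined, so that their first edges differ.

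Next I verify the distance hypothesis. Because $x \in H_{e,f}$, the point $x$ lies in $S_e(F_{t(e)})$ and satisfies $\dist(x, S_f(F_{t(f)})) \leq r$, so whether $e_0 = e$ or $e_0 = f$ we have $\dist(x, S_{e_0}(F_{t(e_0)})) \leq r$. Combined with the hypothesis $\dist(x, S_{\be}(F_{t(\be)})) \leq r$ and the inclusion $S_{\be}(F_{t(\be)}) \subset S_{\bg}(F_{t(\bg)})$, the triangle inequality yields
\begin{equation*}
\dist\bigl(S_{e_0}(F_{t(e_0)}),\, S_{\bg}(F_{t(\bg)})\bigr) \leq 2r.
\end{equation*}

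To convert this into the $\leq c_v r_{\bg}$ form required by Lemma \ref{OSCd} I bound $r_{\bg}$ from below. From $\be \in E_u^*(r)$ we have $r \leq r_{\be \vert_{|\be|-1}} d_{\max}$, and the factorisation $\be \vert_{|\be|-1} = \bg \, e_{|\be|-N+1} \cdots e_{|\be|-1}$ gives $r_{\be \vert_{|\be|-1}} \leq r_{\bg} r_{\max}^{N-1}$, hence $r_{\bg} \geq r/(d_{\max} r_{\max}^{N-1})$. The defining inequality \eqref{N} for $N$ rearranges to $1/(d_{\max} r_{\max}^{N-1}) \geq 2/c_{\min}$, so $r_{\bg} \geq 2r/c_{\min} \geq 2r/c_v$ for every $v \in V$. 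Thus the hypothesis of Lemma \ref{OSCd} holds with $w = v$ for every $v \in V$, and its conclusion delivers $\bl_v \not\subset g_2 g_3 \cdots g_{|\bg|} = e_2 e_3 \cdots e_{|\be|-N}$, which is the desired statement.

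The only real point in the argument is recognising that the parameter $N$ was engineered in Equation \eqref{N} precisely so that the contraction ratio of the trimmed path $\bg$ exceeds $2r/c_{\min}$ uniformly in $v$; once this observation is made, the rest is routine bookkeeping with the triangle inequality and the definition of $E_u^*(r)$.
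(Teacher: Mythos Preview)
Your proof is correct and follows essentially the same route as the paper's: truncate $\be$ to $\bg=\be\vert_{|\be|-N}$, use the defining inequality \eqref{N} to get $2r\leq c_{\min}r_{\bg}$, and apply Lemma \ref{OSCd} with a single edge from $\{e,f\}$ whose first letter differs from $e_1$. The only cosmetic difference is that the paper splits explicitly into the cases $e_1=e$ and $e_1\neq e$, while you handle both at once by choosing $e_0\in\{e,f\}$ with $e_0\neq e_1$.
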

\begin{proof} For $r \in (0,\delta)$, considered fixed, and a path $\be \in E_u^*\left(r\right)$, if $\left|\be\right|< N+l+1$, then 
\begin{equation*}
r  <  \delta = r_{\min}^{N+l+1}d_{\min}  <  r_{\min}^{\left|\be\right|}d_{\min}  \leq   r_{\be}\left|F_{t(\be)}\right|  <  r,
\end{equation*}
and this contradiction ensures $\left|\be\right| \geq N+l+1$. Let $\be \in E_u^*\left(r\right)$ be written as $\be =e_1\ldots\ e_{\left|\be\right|}$. Either $e_1=e$ or $e_1 \neq e$, and so we consider these two cases in turn.

\medskip

(a) $e_1 = e$.

In this case $e_1 \neq f$. Since $S_{\be}\left(F_{t(\be)}\right)\subset S_{\be \vert_{\left|\be \right|-N}}\bigl(F_{t(\be \vert_{\left|\be \right|-N})}\bigr)$ it follows that
\begin{displaymath}
\dist\bigl(x,S_{\be \vert_{\left|\be \right|-N}}(F_{t(\be \vert_{\left|\be \right|-N})})\bigr) \leq \dist\left(x,S_{\be}(F_{t(\be)})\right) \leq r.
\end{displaymath}
As $x \in H_{e,f}$, $x \in S_f(F_{t(f)})(r)$ and from the definition of the closed $r$-neighbourhood
\begin{displaymath}
\dist\left(x, S_f(F_{t(f)})\right) \leq r.
\end{displaymath}
Hence
\begin{align*}
\dist\bigl(S_f(F_{t(f)}), \  &S_{\be \vert_{\left|\be \right|-N}}(F_{t(\be \vert_{\left|\be \right|-N})})\bigr)  && \\ 
&\leq   \dist\left(x, S_f(F_{t(f)})\right)+\dist\bigl(x,S_{\be \vert_{\left|\be \right|-N}}(F_{t(\be \vert_{\left|\be \right|-N})})\bigr) && \\ 
&\leq 2r && \\
&\leq \frac{c_{\min}}{r_{\max}^{N-1}d_{\max}}r && (\textrm{by (\ref{N})})\\
&\leq \frac{c_{\min}}{r_{\max}^{N-1}}\biggl( \ r_{\be \vert_{\left|\be \right|-1}}\frac{\bigl|F_{t(\be \vert_{\left|\be \right|-1})}\bigr|}{d_{\max}} \ \biggr)   && (\textrm{as $\be \in E_u^*\left(r\right)$}) \\
&\leq c_{\min}r_{\be \vert_{\left|\be \right|-N}} && \\
&\leq c_vr_{\be \vert_{\left|\be \right|-N}},    &&
\end{align*}
for all $v \in V$. 

Applying Lemma \ref{OSCd} it follows that $\bl_v \not\subset e_2\ldots e_{\left|\be\right|-N}$ for all $v \in V$.

\medskip

(b) $e_1 \neq	e$. 

In this case the argument is almost identical to that given in part (a), but using $S_e(F_{t(e)})$ in place of $S_f(F_{t(f)})$, where we have $\dist(x, S_e(F_{t(e)}))=0 \leq r$.
\end{proof}

\begin{lem}
\label{OSC3}
Let $r \in (0,\delta)$ and  let $x \in H_{e,f}$, then there exists a path $\be_x \in E_u^*\left(r\right)$, which depends on $x$, such that
\begin{align*}
&\textup{(a) }\quad x \in S_{\be_x}(F_{t(\be_x)})\subset F_u \cap B(x,r)\subset \bigcup_{\substack{\be \in E_u^*(r) \\ \dist(x, S_{\be}(F_{t(\be)}))\leq r}}S_{\be}(F_{t(\be)}), \\
&\textup{(b) }\quad \mu_u\left(B(x,r)\right)^q \ \leq  \ 
\begin{cases} \mu_u\left(S_{\be_x}(F_{t(\be_x)})\right)^q, & \  \text{if $q \leq 0$,}
\\
\Biggl( \ \sum\limits_{\substack{\be \in E_u^*(r) \\ \dist(x, S_{\be}(F_{t(\be)}))\leq r}} \mu_u\left(S_{\be}(F_{t(\be)})\right) \ \Biggr)^q, &  \  \text{if $q > 0$}.
\end{cases}
\end{align*}
\end{lem}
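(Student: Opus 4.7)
The plan is to exploit the three properties of $E_u^*(r)$ listed in the bulleted observations preceding the lemma, together with monotonicity/subadditivity of $\mu_u$.

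For part (a), I would first invoke the identity $F_u = \bigcup_{\be \in E_u^*(r)} S_{\be}(F_{t(\be)})$. Since $x \in H_{e,f} \subset F_u$, there is at least one path $\be_x \in E_u^*(r)$ with $x \in S_{\be_x}(F_{t(\be_x)})$; pick any such path and declare it $\be_x$. The first inclusion in the display is then immediate: $S_{\be_x}(F_{t(\be_x)}) \subset F_u$ by invariance, and from the defining inequality of $E_u^*(r)$ we have $|S_{\be_x}(F_{t(\be_x)})| = r_{\be_x}|F_{t(\be_x)}| < r$, so every point of $S_{\be_x}(F_{t(\be_x)})$ lies within distance $r$ of $x$, giving $S_{\be_x}(F_{t(\be_x)}) \subset B(x,r)$. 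For the third inclusion, take any $y \in F_u \cap B(x,r)$, express $y$ as an element of $S_{\bg}(F_{t(\bg)})$ for some $\bg \in E_u^*(r)$ (again using $F_u = \bigcup_{\be \in E_u^*(r)} S_{\be}(F_{t(\be)})$), and note that $\dist(x, S_{\bg}(F_{t(\bg)})) \leq |x-y| \leq r$, so $\bg$ contributes to the union on the right-hand side.

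For part (b) when $q \leq 0$: by the first inclusion proved in (a), $S_{\be_x}(F_{t(\be_x)}) \subset B(x,r)$, so monotonicity of $\mu_u$ gives $\mu_u(B(x,r)) \geq \mu_u(S_{\be_x}(F_{t(\be_x)})) > 0$ (positivity because $\supp \mu_u = F_u$ and $S_{\be_x}(F_{t(\be_x)})$ has non-empty interior in $F_u$, or directly from Lemma \ref{OSCe}). Raising both sides to the non-positive exponent $q$ reverses the inequality and yields the first case. For $q > 0$: since $\supp \mu_u = F_u$, I would replace $B(x,r)$ by $F_u \cap B(x,r)$ (a measure-preserving move) and then apply the third inclusion from (a) together with countable subadditivity of $\mu_u$ to obtain
\begin{equation*}
\mu_u(B(x,r)) = \mu_u(F_u \cap B(x,r)) \leq \sum_{\substack{\be \in E_u^*(r) \\ \dist(x, S_{\be}(F_{t(\be)}))\leq r}} \mu_u\bigl(S_{\be}(F_{t(\be)})\bigr),
\end{equation*}
and then raise to the positive power $q$, which preserves the inequality.

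There is no real obstacle here; the lemma is essentially an unpacking of the three bulleted facts about $E_u^*(r)$. The only point requiring mild care is keeping track of which direction the inequality goes under $t \mapsto t^q$ depending on the sign of $q$, which is why the bound splits into two cases.
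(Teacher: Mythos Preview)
Your proof is correct and follows essentially the same approach as the paper's. The only cosmetic difference is that you cite the pre-established identity $F_u=\bigcup_{\be\in E_u^*(r)}S_{\be}(F_{t(\be)})$ directly, whereas the paper re-derives the existence of $\be_x$ inline via the surjection $\phi_u$ of Lemma~\ref{surjection phi_u}; the treatment of part~(b) via monotonicity and subadditivity is identical.
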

\begin{proof}
As $\supp \mu_u = F_u$, $\mu_u\left(B(x,r)\right)=\mu_u\left(F_u\cap B(x,r)\right)$, part (b) is an immediate consequence of part (a).

$H_{e,f}$ is an $r$-separated subset of $S_e(F_{t(e)})\cap S_{f}(F_{t(f)})(r)$, so $x \in F_u$ and the map $\phi_u : E_u^\mathbb{N} \to F_u$ given in Lemma \ref{surjection phi_u} ensures the existence of an infinite path $\be \in E_u^\mathbb{N}$ with
\begin{equation*}
\left\{x\right\} = \bigcap_{k=1}^{\infty} S_{\be \vert_{ k}}(F_{t(\be \vert_{ k})})
\end{equation*}
Now $\left(S_{\be \vert_{ k}}(F_{t(\be \vert_{ k})})\right)$ is a decreasing sequence of non-empty compact sets whose diameters tend to zero as $k$ tends to infinity and so there exists $j\in \mathbb{N}$ such that 
\begin{equation*}
\left|S_{\be \vert_j}(F_{t(\be \vert_j)})\right|=r_{\be \vert_j} \left|F_{t(\be \vert_j)}\right|< r \leq r_{\be \vert_{j-1}} \left|F_{t(\be \vert_{j-1})}\right|=\left|S_{\be \vert_{j-1}}(F_{t(\be \vert_{j-1})})\right|.
\end{equation*}
Putting $\be_x=\be \vert_j$, $\be_x \in E_u^*\left(r\right)$ and
\begin{equation*}
x \in S_{\be_x}(F_{t(\be_x)})\subset F_u \cap B(x,r).
\end{equation*}
By the same argument for any $y \in F_u \cap B(x,r)$ there exists a path $\be_y \in E_u^*\left(r\right)$ such that $y \in S_{\be_y}(F_{t(\be_y)})\subset F_u \cap B(y,r)$. Since $y \in B(x,r)$ it follows that 
\begin{equation*}
\dist\left(x, S_{\be_y}(F_{t(\be_y)})\right) \leq r
\end{equation*}
so that 
\begin{equation*}
F_u \cap B(x,r)\subset \bigcup_{\substack{\be \in E_u^*(r) \\ \dist(x, S_{\be}(F_{t(\be)}))\leq r}}S_{\be}(F_{t(\be)}). 
\qedhere
\end{equation*}
\end{proof}

In the next lemma $\overline{U}_v$ is the closure of $U_v$.
\begin{lem}
\label{closure U_u}
Let $(F_v)_{v \in V} \in \left(K\left(\mathbb{R}^{m}\right)\right)^{n}$ be the attractor of an $n$-vertex IFS, as given by Equation \eqref{Invariance 1}, and suppose that the OSC is satisfied by the non-empty bounded open sets $(U_v)_{v\in V}$, then
\begin{equation*}
F_v\subset \overline{U}_v, \textrm{ for each } v \in V. 
\end{equation*}   
\end{lem}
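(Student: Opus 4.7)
The plan is to exploit the one-sided invariance of the tuple $(\overline{U}_v)_{v \in V}$ under the IFS operator, combined with uniqueness coming from the Contraction Mapping Theorem. Since each $U_v$ is bounded and non-empty, each $\overline{U}_v$ lies in $K(\mathbb{R}^m)$, so $(\overline{U}_v)_{v \in V} \in (K(\mathbb{R}^m))^n$ and the Hausdorff (product) metric applies.

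First I would define the operator $T: (K(\mathbb{R}^m))^n \to (K(\mathbb{R}^m))^n$ by
\begin{equation*}
T\bigl((A_u)_{u \in V}\bigr) = \biggl( \ \bigcup_{e \in E_u^1} S_e(A_{t(e)}) \ \biggr)_{u \in V},
\end{equation*}
which is the contraction whose unique fixed point is $(F_v)_{v \in V}$, by the Contraction Mapping Theorem used to establish Equation \eqref{Invariance 1}. The OSC gives $S_e(U_{t(e)}) \subset U_u$ for each $e \in E_u^1$, and by continuity of $S_e$ this passes to closures: $S_e(\overline{U}_{t(e)}) = \overline{S_e(U_{t(e)})} \subset \overline{U}_u$. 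Taking the union over $e \in E_u^1$ gives the componentwise containment $T\bigl((\overline{U}_v)_{v \in V}\bigr) \subset (\overline{U}_v)_{v \in V}$.

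Next I would iterate. Writing $\bA^{(k)} = T^k\bigl((\overline{U}_v)_{v \in V}\bigr)$, the one-sided invariance yields a decreasing sequence of tuples of non-empty compact sets, with $A^{(k)}_v \subset \overline{U}_v$ for every $k$ and every $v \in V$. By the Contraction Mapping Theorem, $\bA^{(k)} \to (F_v)_{v \in V}$ in the Hausdorff product metric.

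Finally, Hausdorff convergence plus the uniform containment will give the conclusion: for any $\varepsilon > 0$ there exists $k$ with $F_v$ contained in the closed $\varepsilon$-neighbourhood of $A^{(k)}_v$, hence of $\overline{U}_v$; since $\overline{U}_v$ is closed and $\varepsilon > 0$ is arbitrary, $F_v \subset \overline{U}_v$. There is no substantive obstacle here — the only thing to be careful about is not confusing ``$T$ preserves $\overline{U}$'' (one-sided) with ``$\overline{U}$ is a fixed point''; the proof uses only the one-sided statement together with uniqueness of the attractor.
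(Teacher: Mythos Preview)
Your argument is correct. The paper itself does not give a proof of this lemma inline --- it simply cites \cite[Lemma 1.3.6]{phdthesis_Boore} --- so there is no in-paper proof to compare against. What you have written is the standard argument: the OSC makes $(\overline{U}_v)_{v\in V}$ forward-invariant under the IFS operator $T$, monotonicity of $T$ then gives a decreasing sequence of compact tuples whose Hausdorff limit is the attractor, and closedness of $\overline{U}_v$ passes the containment to the limit. One minor simplification you could make in the last step: since the sequence $(A_v^{(k)})_k$ is decreasing in $K(\mathbb{R}^m)$, its Hausdorff limit is exactly $\bigcap_k A_v^{(k)}$, which lies in $\overline{U}_v$ directly, so the $\varepsilon$-neighbourhood argument is not strictly needed.
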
 
\begin{proof} See \cite[Lemma 1.3.6]{phdthesis_Boore}.
\end{proof}

We remind the reader here that ${E_u^*\left(r\right)}$ used in the statement of Lemma \ref{OSC4} is the set of finite paths in the directed graph as defined in Equation \eqref{E_u*r}.

\begin{lem}
\label{OSC4}
Let $q \in \mathbb{R}$. Then there exists a positive number $C_2\left(q\right)$, such that for all $r \in (0,\delta)$ and all $x \in H_{e,f}$,
\begin{equation*}
\mu_u\left(B(x,r)\right)^q  \leq   
 C_2\left(q\right)\sum_{\substack{\be \in E_u^*(r) \\ \dist(x, S_{\be}(F_{t(\be)}))\leq r}} \mu_u\left(S_{\be}(F_{t(\be)})\right)^q.
\end{equation*}
\end{lem}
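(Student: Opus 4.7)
The plan is to split the argument into the two cases given by Lemma \ref{OSC3}(b), namely $q \leq 0$ and $q > 0$, and then to handle the $q>0$ case by combining a uniform cardinality bound on the summation index set with Lemma \ref{OSCg}.

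For $q \leq 0$, the path $\be_x \in E_u^*(r)$ provided by Lemma \ref{OSC3}(a) satisfies $x \in S_{\be_x}(F_{t(\be_x)})$, so in particular $\dist(x, S_{\be_x}(F_{t(\be_x)})) = 0 \leq r$, i.e.\ $\be_x$ appears in the summation set. Since $\mu_u(S_{\be}(F_{t(\be)})) = p_\be > 0$ for every $\be$ by Lemma \ref{OSCe}, all terms of the sum are strictly positive, so Lemma \ref{OSC3}(b) gives
\begin{equation*}
\mu_u(B(x,r))^q \;\leq\; \mu_u\!\left(S_{\be_x}(F_{t(\be_x)})\right)^q \;\leq\; \sum_{\substack{\be \in E_u^*(r) \\ \dist(x, S_{\be}(F_{t(\be)}))\leq r}} \mu_u\!\left(S_{\be}(F_{t(\be)})\right)^q,
\end{equation*}
so $C_2(q) = 1$ works.

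The substantive case is $q > 0$, where Lemma \ref{OSC3}(b) controls $\mu_u(B(x,r))^q$ by the $q$th power of the \emph{sum}; to exchange this with a sum of $q$th powers via Lemma \ref{OSCg}, I need a bound on the number of summands that is independent of $x$ and $r$. To obtain it I will apply Lemma \ref{OSCf} to the family $\left\{S_{\be}(\overline{U}_{t(\be)}) : \be \in E_u^*(r),\ \dist(x, S_{\be}(F_{t(\be)})) \leq r\right\}$. Using the SOSC, choose for each $v \in V$ a ball $B(x_v, \varrho_v) \subset U_v$ with $x_v \in F_v \cap U_v$ and set $\varrho_{\min} = \min_v \varrho_v$. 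Then each $S_{\be}(\overline{U}_{t(\be)})$ contains the closed ball $B(S_{\be}(x_{t(\be)}), r_\be \varrho_{\min})$, and from $\be \in E_u^*(r)$ and the definition of $E_u^*(r)$ we get $r_{\be} \geq r_{\min}\, r / d_{\max}$, so this ball has radius at least $c_1 r$ with $c_1 = r_{\min} \varrho_{\min}/d_{\max}$. For the outer radius, $|S_{\be}(\overline{U}_{t(\be)})| = r_\be |\overline{U}_{t(\be)}| \leq (r/d_{\min}) \max_v |U_v|$, so each such set lies in a closed ball of radius $c_2 r$ with $c_2 = \max_v |U_v| / d_{\min}$. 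The inner balls are pairwise disjoint because for distinct $\be, \bg \in E_u^*(r)$ neither is a subpath of the other, so Lemma \ref{OSCintersection}(a) forces $S_{\be}(U_{t(\be)}) \cap S_{\bg}(U_{t(\bg)}) = \emptyset$. Finally, by Lemma \ref{closure U_u}, $F_{t(\be)} \subset \overline{U}_{t(\be)}$, so $\dist(x, S_{\be}(F_{t(\be)})) \leq r$ implies $S_{\be}(\overline{U}_{t(\be)}) \cap B(x,r) \neq \emptyset$, and Lemma \ref{OSCf} yields a bound $\#\{\be : \ldots\} \leq C := \left((1+2c_2)/c_1\right)^m$ independent of $x$ and $r$.

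With this bound in hand, Lemma \ref{OSC3}(b) and Lemma \ref{OSCg} (applied with $p = q$ and $m \leq C$) give
\begin{equation*}
\mu_u(B(x,r))^q \;\leq\; \Biggl( \sum_{\substack{\be \in E_u^*(r) \\ \dist(x, S_{\be}(F_{t(\be)}))\leq r}} \mu_u\!\left(S_{\be}(F_{t(\be)})\right)\Biggr)^q \;\leq\; \max\{1, C^{q-1}\} \sum_{\substack{\be \in E_u^*(r) \\ \dist(x, S_{\be}(F_{t(\be)}))\leq r}} \mu_u\!\left(S_{\be}(F_{t(\be)})\right)^q,
\end{equation*}
so $C_2(q) = \max\{1, C^{q-1}\}$ does the job. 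The main obstacle is the cardinality bound via Lemma \ref{OSCf}: care is needed to pick the enlarged sets $S_{\be}(\overline{U}_{t(\be)})$ (rather than $S_{\be}(F_{t(\be)})$, whose mutual intersections need not be empty under the OSC) so that the disjoint-inner-ball hypothesis of Lemma \ref{OSCf} is satisfied while still witnessing the hypothesis $\dist(x, S_{\be}(F_{t(\be)})) \leq r$, and to verify that the inner and outer radii scale linearly in $r$ with constants depending only on the system.
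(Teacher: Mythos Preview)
Your proposal is correct and follows essentially the same approach as the paper: split according to Lemma \ref{OSC3}(b), handle $q\le 0$ trivially via the single term $\be_x$, and for $q>0$ bound the number of summands by applying Lemma \ref{OSCf} to the family $\{S_{\be}(\overline{U}_{t(\be)})\}$ (using Lemma \ref{OSCintersection}(a) for disjointness of the inner balls and Lemma \ref{closure U_u} to pass from $F_{t(\be)}$ to $\overline{U}_{t(\be)}$), then feed the resulting cardinality bound into Lemma \ref{OSCg}. The only differences from the paper are cosmetic choices of constants (you use $\varrho_{\min}$ and $\max_v|U_v|$ where the paper uses generic inner/outer radii $\rho_1,\rho_2$ for the $U_v$), and you separate the $q\le 0$ case explicitly with $C_2(q)=1$ whereas the paper absorbs both cases into the single constant $C_2(q)=\max\{1,C_1^{q-1}\}$.
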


\begin{proof} The sets $\left\{S_{\be}(U_{t(\be)}):\be \in E_u^*\left(r\right)\right\}$ are disjoint open sets, where $(U_v)_{v \in V}$ are the open sets of the SOSC. We may assume each $U_v$ contains a closed ball of radius $\rho_1$ and is contained in a closed ball of radius $\rho_2$, so that $S_{\be}(\overline{U}_{t(\be)})$ contains a closed ball of radius $r_{\be}\rho_1$ and is contained in a closed ball of radius $r_{\be}\rho_2$. For any $\be \in E_u^*\left(r\right)$ 

\begin{equation*}
r_{\be}\left|F_{t(\be)}\right|<r\leq  r_{\be \vert_{\left|\be \right|-1}}\left|F_{t(\be \vert_{\left|\be \right|-1})}\right|,
\end{equation*}
so that
\begin{equation*}
 \frac{r_{\min}}{d_{\max}} r \leq r_{\be}<\frac{r}{d_{\min}}.
\end{equation*}

This means that, for each $\be \in E_u^*\left(r\right)$, $S_{\be}(\overline{U}_{t(\be)})$ contains a closed ball of radius $\frac{r_{\min}\rho_1}{d_{\max}}r=c_1r$, which we label as $B_\be$, and is contained in a closed ball of radius $\frac{\rho_2}{d_{\min}}r=c_2r$. The set $\left\{B_\be : \be \in E_u^*\left(r\right) \right\}$ is a disjoint set because  $\left\{S_{\be}(U_{t(\be)}) : \be \in E_u^*\left(r\right) \right\}$ is disjoint. The situation is illustrated schematically in Figure \ref{Ch4b}, in $\mathbb{R}^2$, where we have indicated the closed ball $B_{\be_x}$ contained in $S_{\be_x}(\overline{U}_{t(\be_x)})$. 

\begin{figure}[!htb]
\begin{center}
%trim=left bottom right top (also need clip).
\includegraphics[trim = 30mm 138mm 35mm 12mm, clip, scale=0.7]{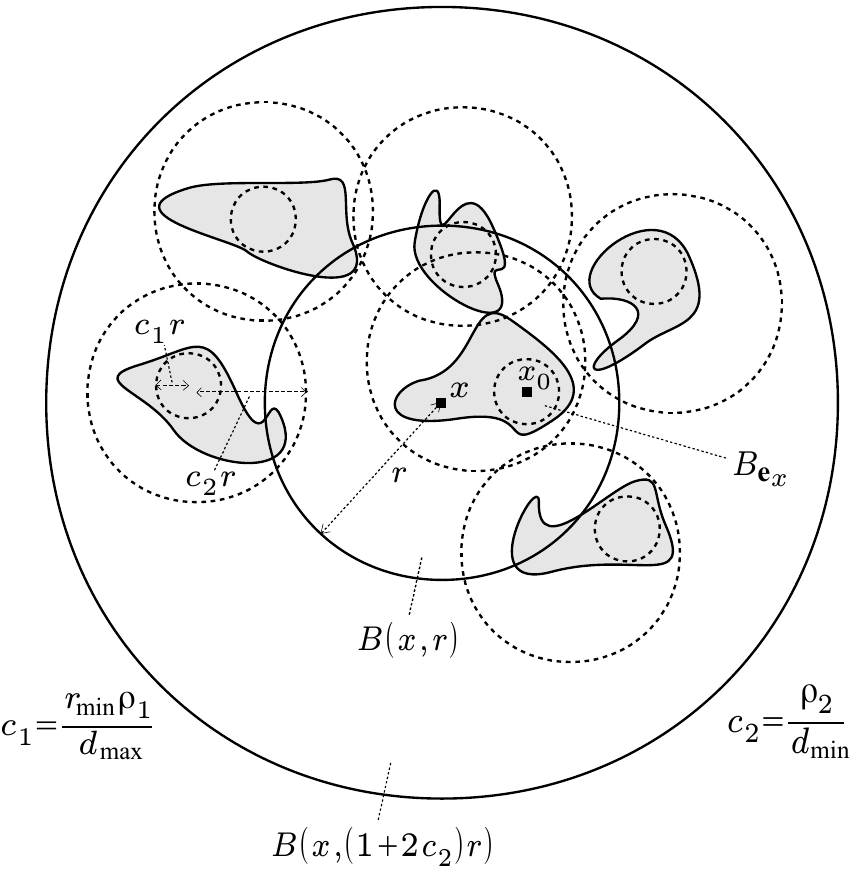}%pic can be png, jpg or pdf for pdfLaTeX - not eps.
\end{center}
\caption{The closed sets $\left\{S_{\be}(\overline{U}_{t(\be)}) : \be \in E_u^*\left(r\right), \, \dist(x, S_{\be}(F_{t(\be)}))\leq r\right\}$, illustrated schematically as grey areas in $\mathbb{R}^2$. Each $S_{\be}(\overline{U}_{t(\be)})$ contains a closed ball of radius $c_1r$ and is contained in a closed ball of radius $c_2r$.}
\label{Ch4b}
\end{figure}

We now obtain   
\begin{align*}
\# \{ \be :  \be \in &E_u^*\left(r\right), \, \dist( x, S_{\be}(F_{t(\be)}) )\leq r \}  && \\
&=  \# \left\{  \be :  \be \in E_u^*\left(r\right), \, S_{\be}(F_{t(\be)}) \cap B\left(x,r\right) \neq \emptyset \right\}  && \\  
& \leq  \# \left\{  \be :  \be \in E_u^*\left(r\right), \, S_{\be}(\overline{U}_{t(\be)}) \cap B\left(x,r\right) \neq \emptyset \right\} && (\textrm{\small{by Lemma \ref{closure U_u}}}) && \\ 
&\leq  \left(\frac{ 1 + \frac{2\rho_2}{d_{\min}}}{\frac{r_{\min}\rho_1}{d_{\max}}}\right)^m = C_1 &&(\textrm{\small{by Lemma \ref{OSCf}}}). 
\end{align*}
Here we have used Lemma \ref{OSCf} with $c_1=\frac{r_{\min}\rho_1}{d_{\max}}$ and $c_2=\frac{\rho_2}{d_{\min}}$, as shown in Figure \ref{Ch4b}. Applying Lemma \ref{OSCg} gives 
\begin{equation*}
\Biggl( \ \sum_{\substack{\be \in E_u^*(r) \\ \dist(x, S_{\be}(F_{t(\be)}))\leq r}} \mu_u\left(S_{\be}(F_{t(\be)})\right) \ \Biggr)^q \ \leq  \ 
C_2\left(q\right)\sum_{\substack{\be \in E_u^*(r) \\ \dist(x, S_{\be}(F_{t(\be)}))\leq r}} \mu_u\left(S_{\be}(F_{t(\be)})\right)^q,
\end{equation*}
where $C_2\left(q\right) = \max\left\{1, C_1^{q-1}\right\}$.

As $\be_x \in E_u^*\left(r\right)$ and $\dist(x, S_{\be_x}(F_{t(\be_x)}))=0$ we also have 
\begin{equation*}
\mu_u\left(S_{\be_x}(F_{t(\be_x)})\right)^q \leq  C_2\left(q\right)\sum_{\substack{\be \in E_u^*(r) \\ \dist(x, S_{\be}(F_{t(\be)}))\leq r}} \mu_u\left(S_{\be}(F_{t(\be)})\right)^q.
\end{equation*}
The result now follows by Lemma \ref{OSC3}(b). 
\end{proof}

In the next lemma we use a second application of Lemma \ref{OSCf} to obtain a bound for $\sum_{x \in H_{e,f}}\mu_u\left(B(x,r)\right)^q$. 
\begin{lem}
\label{OSC5}
Let $q \in \mathbb{R}$, let $r \in (0,\delta)$, and let $(\bl_v)_{v \in V}$ be the list of paths defined  in Equation (\ref{l_u}) and Subsection \ref{matrix B}.

Then
\begin{equation*}
\sum_{x \in H_{e,f}}\mu_u\left(B(x,r)\right)^q   \leq   C_2\left(q\right)C_3 \sum_{\substack{\be \in E_u^*(r) \\ \forall v \in V : \ \bl_v \, \not\subset  \, e_2\ldots e_{\left|\be\right|-N}}} p_\be^q. 
\end{equation*}
\end{lem}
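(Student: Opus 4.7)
The plan is to apply Lemma \ref{OSC4} pointwise to each $x \in H_{e,f}$, then swap the order of summation to replace a sum indexed by points with a sum indexed by finite paths in $E_u^*(r)$. Concretely, Lemma \ref{OSC4} gives
\begin{equation*}
\sum_{x \in H_{e,f}} \mu_u\left(B(x,r)\right)^q \leq C_2(q) \sum_{x \in H_{e,f}} \sum_{\substack{\be \in E_u^*(r) \\ \dist(x, S_{\be}(F_{t(\be)})) \leq r}} \mu_u\left(S_{\be}(F_{t(\be)})\right)^q,
\end{equation*}
and interchanging the summation order turns the right-hand side into
\begin{equation*}
C_2(q) \sum_{\be \in E_u^*(r)} \mu_u\left(S_{\be}(F_{t(\be)})\right)^q \cdot N_{\be},
\end{equation*}
where $N_{\be} = \#\{x \in H_{e,f} : \dist(x, S_{\be}(F_{t(\be)})) \leq r\}$.

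Next I would dispose of the three factors in the summand separately. First, Lemma \ref{OSCe} gives the clean identity $\mu_u\left(S_{\be}(F_{t(\be)})\right) = p_{\be}$, so $\mu_u\left(S_{\be}(F_{t(\be)})\right)^q = p_{\be}^q$. Second, Lemma \ref{OSCh} asserts that whenever $x \in H_{e,f}$ contributes to $N_{\be}$ (that is, $\dist(x, S_{\be}(F_{t(\be)})) \leq r$), the path $\be = e_1 \ldots e_{|\be|}$ automatically satisfies $\bl_v \not\subset e_2 \ldots e_{|\be|-N}$ for every $v \in V$; so the sum over $E_u^*(r)$ can be restricted to precisely this subcollection without changing its value.

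The remaining step, and the only genuine piece of work, is to produce a universal constant $C_3 = C_3(m)$ with $N_{\be} \leq C_3$ for every $\be \in E_u^*(r)$. I would use Lemma \ref{OSCf}: for fixed $\be$, pick any $y_0 \in S_{\be}(F_{t(\be)})$ and note that $|S_{\be}(F_{t(\be)})| < r$ (by the definition of $E_u^*(r)$) forces every $x$ contributing to $N_{\be}$ into $B(y_0, 2r)$. The closed balls $\{B(x, r/2) : x \in H_{e,f}\}$ are pairwise disjoint because $H_{e,f}$ is $r$-separated (so $|x-y| > 2r$ implies the radius-$r/2$ balls are certainly disjoint). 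Reparameterising with $r' = 2r$, each such $B(x, r/2)$ is both contained in and contains a closed ball of radius $r'/4$, so Lemma \ref{OSCf} with $c_1 = c_2 = 1/4$ yields $N_{\be} \leq (1 + 1/2)^m/(1/4)^m = 6^m =: C_3$.

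Combining the three bounds gives exactly
\begin{equation*}
\sum_{x \in H_{e,f}} \mu_u\left(B(x,r)\right)^q \leq C_2(q) C_3 \sum_{\substack{\be \in E_u^*(r) \\ \forall v \in V : \bl_v \not\subset e_2 \ldots e_{|\be|-N}}} p_{\be}^q,
\end{equation*}
as required. The main obstacle is really the packing count: one must be careful that the $r$-separated condition on $H_{e,f}$ (balls of radius $r$ disjoint) is strong enough to bound the number of centres lying within distance $r$ of a set of diameter $<r$, and that the geometric constant emerging depends only on $m$ and not on $\be$, $r$, $e$, or $f$. Everything else is bookkeeping: invoking Lemma \ref{OSCe} for the measure identity and Lemma \ref{OSCh} for the subpath restriction.
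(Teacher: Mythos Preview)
Your argument is correct and follows the same skeleton as the paper: apply Lemma~\ref{OSC4} pointwise, interchange the order of summation, bound the multiplicity $N_{\be}$ uniformly, then invoke Lemma~\ref{OSCe} for $\mu_u(S_{\be}(F_{t(\be)}))=p_{\be}$ and Lemma~\ref{OSCh} to restrict the index set of paths.

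The genuine difference lies in how the multiplicity bound $N_{\be}\leq C_3$ is obtained. You argue directly: since $|S_{\be}(F_{t(\be)})|<r$, every contributing $x$ lies in a ball $B(y_0,2r)$, and the $r$-separation of $H_{e,f}$ alone (via disjoint balls $B(x,r/2)$ and Lemma~\ref{OSCf}) gives $N_{\be}\leq 6^m$, a constant depending only on the ambient dimension. The paper instead builds, for each $y\in H_{e,f}$, an auxiliary set $\overline{U}(y)=\bigcup S_{\be}(\overline{U}_{t(\be)})$ from the OSC open sets, shows that the associated paths $\be_y$ are distinct for distinct $y$, and then applies Lemma~\ref{OSCf} to these sets; this yields a larger constant $C_3=\bigl((1+2(1+2\rho_2/d_{\min}))/(r_{\min}\rho_1/d_{\max})\bigr)^m$ that depends on the OSC data $\rho_1,\rho_2$ and the contraction ratios. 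Your route is more elementary and gives a cleaner constant; the paper's route reuses the same OSC machinery already set up for Lemma~\ref{OSC4}, which keeps the argument stylistically uniform but is not needed here since the diameter bound $|S_{\be}(F_{t(\be)})|<r$ is already built into the definition of $E_u^*(r)$.
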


\begin{proof}
By Lemma \ref{OSC3}(a), given any $y \in H_{e,f}$, we can find a path $\be_y \in E_u^*\left(r\right)$ such that
\begin{equation}
\label{subset_2}
\begin{split}
y \in S_{\be_y}(F_{t(\be_y)})&\subset F_u \cap B(y,r)\subset \bigcup_{\substack{\be \in E_u^*(r) \\ \dist(y, S_{\be}(F_{t(\be)}))\leq r}}S_{\be}(F_{t(\be)}) \\
&\subset \bigcup_{\substack{\be \in E_u^*(r) \\ \dist(y, S_{\be}(F_{t(\be)}))\leq r}}S_{\be}( \overline{U}_{t(\be)}), 
\end{split}
\end{equation}
where $(U_v)_{v \in V}$ are the open sets of the SOSC and we have used Lemma \ref{closure U_u}. For $y \in H_{e,f}$ it is convenient to use the notation 
\begin{equation*}
\overline{U}(y) = \bigcup_{\substack{\be \in E_u^*(r) \\ \dist(y, S_{\be}(F_{t(\be)}))\leq r}}S_{\be}( \overline{U}_{t(\be)}). 
\end{equation*}
As before we are assuming the open sets, $(U_v)_{v \in V}$, each contain a closed ball of radius $\rho_1$ and are contained in a closed ball of radius $\rho_2$. As explained in the proof of Lemma \ref{OSC4} this means that for each $\be \in E_u^*\left(r\right)$, $S_{\be}(\overline{U}_{t(\be)})$ contains a closed ball, $B_\be$, of radius $\frac{r_{\min}\rho_1}{d_{\max}}r$ and is contained in a closed ball of radius $\frac{\rho_2}{d_{\min}}r$, where the set $\left\{B_\be : \be \in E_u^*\left(r\right) \right\}$ is disjoint. 

Now consider $x,y \in H_{e,f}$ with $x \neq y$. The paths $\be_x, \be_y \in E_u^*\left(r\right)$, established in Equation (\ref{subset_2}) by Lemma \ref{OSC3}(a), cannot be the same since $\be_x=\be_y$ means $x \in B(y,r)$ which is impossible as $H_{e,f}$ is $r$-separated. So $\be_x \neq \be_y$ for $x,y \in H_{e,f}$, $x \neq y$. This means that for each of the sets
\begin{equation*}
\biggl\{\overline{U}(x) :x \in H_{e,f}\biggr\},
\end{equation*}
we may choose a single closed ball $B_x=B_{\be_x}\subset S_{\be_x}(\overline{U}_{t(\be_x)}) \subset \overline{U}(x)$, where $B_x$ is of radius $\frac{r_{\min}\rho_1}{d_{\max}}r=c_1r$, with $B_x=B(x_0,\frac{r_{\min}\rho_1}{d_{\max}}r)$ for some point $x_0 \in S_{\be_x}(\overline{U}_{t(\be_x)})$. The closed ball $B_x=B_{\be_x}$ is indicated in both Figures \ref{Ch4b} and \ref{Ch4c}. The set $\left\{B_x :  x \in H_{e,f} \right\}$ is then a disjoint set of closed balls.

\begin{figure}[!htb]
\begin{center}
%trim=left bottom right top (also need clip).
%rotate by 30 degrees anticlockwise is angle=30.
%width=xxcm, height=xxcm, width=.75\columnwidth.
%scale=0.2 gives a scaling of a fifith
\includegraphics[trim = 5mm 145mm 5mm 9mm, clip, scale=0.7]{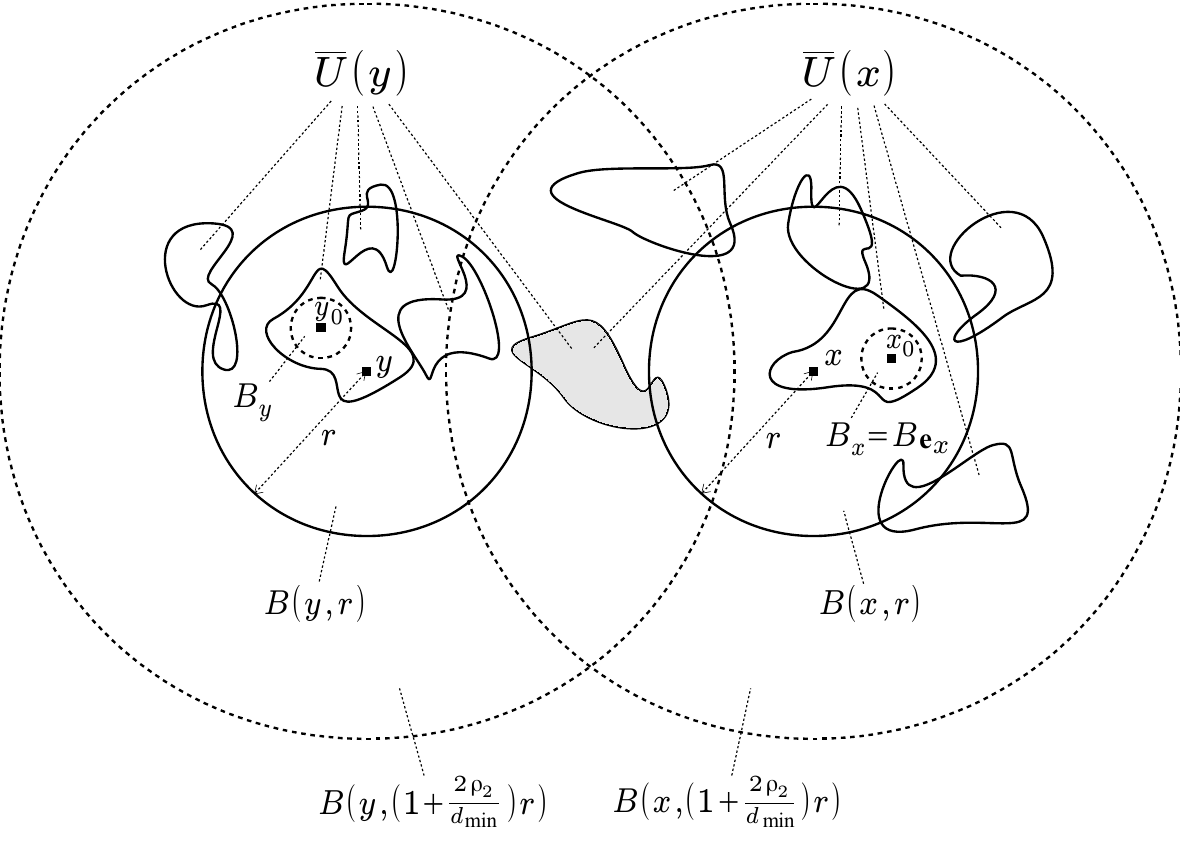}%pic can be png, jpg or pdf for pdfLaTeX - not eps.
\end{center}
\caption{Two sets $\overline{U}(x)$ and $\overline{U}(y)$, illustrated schematically in $\mathbb{R}^2$. The grey set belongs to both $\overline{U}(x)$ and $\overline{U}(y)$.}
\label{Ch4c}
\end{figure}

It is also the case that $\left| S_{\be}(\overline{U}_{t(\be)})\right| \leq \frac{2\rho_2}{d_{\min}}r$, because $S_{\be}(\overline{U}_{t(\be)})$ is contained in a closed ball of radius $\frac{\rho_2}{d_{\min}}r$, so that for each $x \in H_{e,f}$,
\begin{equation*}
\overline{U}(x) \subset B\Bigl(x, \Bigl(1 + \frac{ \scriptstyle 2\rho_2}{\scriptstyle d_{\min}}\Bigr)r\Bigr). 
\end{equation*} 
That is, for each $x \in H_{e,f}$, $\overline{U}(x)$ is contained in a closed ball of radius $\bigl(1 + \frac{ 2\rho_2}{ d_{\min}}\bigr)r=c_2r$. The situation is illustrated in Figure \ref{Ch4c} in $\mathbb{R}^2$, for two sets $\overline{U}(x)$ and $\overline{U}(y)$. 

We are now in a position to apply Lemma \ref{OSCf} again. Let $x \in H_{e,f}$ be fixed and let $\bg \in E_u^*\left(r\right)$ be a path for which $\dist(x,S_\bg(F_{t(\bg)}))\leq r$, which we also consider to be fixed. Let $N(x,\bg) \in \mathbb{N}$, be the number of times the term $\mu_u(S_{\bg}(F_{t(\bg)}))^q$ is counted in the sum
\begin{equation*}
\sum_{y \in H_{e,f}}\sum_{\substack{\be \in E_u^*(r) \\ \dist(y, S_{\be}(F_{t(\be)}))\leq r}} \mu_u\left(S_{\be}(F_{t(\be)})\right)^q.
\end{equation*}
Then 
\begin{align*}
N(x,\bg) &= \# \left\{y : y \in H_{e,f} \textrm{ and } \dist(y,S_\bg(F_{t(\bg)}))\leq r\right\} &&\\
&\leq \# \Biggl\{y : y \in H_{e,f} \textrm{ and } \bigcup_{\substack{\be \in E_u^*(r) \\ \dist(y, S_{\be}(F_{t(\be)}))\leq r}}  S_{\be}(F_{t(\be)}) &&  \cap  B(x,r)  \neq    \emptyset \Biggr\} \\
&\leq \# \left\{y : y \in H_{e,f} \textrm{ and } \overline{U}(y) \ \cap \ B(x,r) \neq \emptyset\right\} && (\textrm{\small{by (\ref{subset_2})}}) &&\\
&\leq\left(\frac{1 + 2\left(1 + \frac{\scriptstyle 2\rho_2}{\scriptstyle d_{\min}}\right)}{\frac{r_{\min}\rho_1}{d_{\max}}}\right)^m= C_3 && (\textrm{\small{by Lemma \ref{OSCf}}}).  
\end{align*}
Here we have applied Lemma \ref{OSCf} with $c_1=\frac{r_{\min}\rho_1}{d_{\max}}$ and $c_2=1 + \frac{ \scriptstyle 2\rho_2}{\scriptstyle d_{\min}}$.
Using this result it is clear that for each distinct path $\be$ in the sum
\begin{displaymath}
\sum_{x \in H_{e,f}}\sum_{\substack{\be \in E_u^*(r) \\ \dist(x, S_{\be}(F_{t(\be)}))\leq r}} \mu_u\left(S_{\be}(F_{t(\be)})\right)^q,
\end{displaymath}
the term $ \mu_u\left(S_{\be}(F_{t(\be)})\right)^q$ is counted at most $C_3$ times. As an example, if $\be^\prime$ is the path corresponding to the set $S_{\be^\prime}( \overline{U}_{t(\be^\prime)})$, coloured grey in Figure \ref{Ch4c}, then $\dist(x, S_{\be^\prime}(F_{t(\be^\prime)}))\leq r$ and $\dist(y, S_{\be^\prime}(F_{t(\be^\prime)}))\leq r$, so that $\mu_u\left(S_{\be^\prime}(F_{t(\be^\prime)})\right)^q$ would be counted at least twice in this sum, for $x,y \in H_{e,f}$. 

This implies that
\begin{align*}
\sum_{x \in H_{e,f}}\mu_u\left(B(x,r)\right)^q &\leq C_2\left(q\right)\sum_{x \in H_{e,f}}\sum_{\substack{\be \in E_u^*(r) \\ \dist(x, S_{\be}(F_{t(\be)}))\leq r}} \mu_u\left(S_{\be}(F_{t(\be)})\right)^q && (\textrm{\small{by Lemma \ref{OSC4}}}) \\
&\leq C_2\left(q\right)C_3 \sum_{\substack{\be \in E_u^*(r) \\ \forall v \in V : \ \bl_v \, \not\subset  \, e_2\ldots e_{\left|\be\right|-N}}} \mu_u\left(S_{\be}(F_{t(\be)})\right)^q && (\textrm{\small{by Lemma \ref{OSCh}}}) \\
&\leq C_2\left(q\right)C_3 \sum_{\substack{\be \in E_u^*(r) \\ \forall v \in V : \ \bl_v \, \not\subset  \, e_2\ldots e_{\left|\be\right|-N}}} p_\be^q && (\textrm{\small{by Lemma \ref{OSCe}}}).
\end{align*}
\end{proof}

We now define, for $r \in (0, +\infty)$, two related column vectors $(G_w\left(r\right))_{w \in V}^\transpose$ and $(\G_w\left(r\right))_{w \in V}^\transpose$. For $q\in \mathbb{R}$, let $\gamma=\gamma\left(q\right)\in \mathbb{R}$ be the unique number such that $\rho\left(\bB\left(q,\gamma,l\right)\right)=1$ for the matrix $\bB\left(q,\gamma,l\right)$, as defined in Subsection \ref{matrix B}, which we assume is irreducible. Let $(\bl_v)_{v \in V}$ be the list of paths defined  in Equation (\ref{l_u}) and Subsection \ref{matrix B}, let $N$ be as chosen in Inequality (\ref{N}), let $r \in (0, +\infty)$, and let 
\begin{equation*}
\alpha = \frac{1}{r_{\max}^N d_{\max}} \quad \textrm{and} \quad \beta = \frac{1}{r_{\min}^{N+1}d_{\min}}.
\end{equation*}
For each $w \in V$, let
\begin{equation}
\label{Gw(r)}
G_w\left(r\right) = \sum_{ \substack{ \bg \in E_w^* \\ \alpha r \, \leq \, r_\bg \, < \, \beta r\\ \forall v \in V : \ \bl_v \, \not\subset  \, \bg } } p_\bg^q,  
\end{equation}
and 
\begin{equation}
\label{mathcalGw(r)}
\G_w\left(r\right) = r^{\gamma\left(q\right)}G_w\left(r\right).  
\end{equation}

We point out here that for small $r$, with $r \in (0,\delta)$,
\begin{equation}
\label{g greater than l}
G_w\left(r\right) = \sum_{ \substack{ \bg \in E_w^* \\ \alpha r \, \leq \, r_\bg \, < \, \beta r\\ \forall v \in V : \ \bl_v \, \not\subset  \, \bg } } p_\bg^q \quad = \sum_{ \substack{ \bg \in E_w^* \\  \left|\bg\right| \geq \,l \\ \alpha r \, \leq \, r_\bg \, < \, \beta r \\ \forall v \in V : \ \bl_v \, \not\subset  \, \bg } } p_\bg^q.  
\end{equation}
This is because $0<r<\delta= r_{\min}^{N+l+1}d_{\min}$, so if $\left|\bg\right|< l$ and $r_\bg<\frac{r}{r_{\min}^{N+1}d_{\min}}= \beta r$ then 
\begin{equation*}
r_\bg<\frac{r}{r_{\min}^{N+1}d_{\min}}<r_{\min}^l<r_{\min}^{\left|\bg\right|} \leq r_\bg, 
\end{equation*}
and this contradiction ensures $\left|\bg\right| \geq l$.

\begin{lem}
\label{OSC6}
Let $r \in (0,\delta)$, and let $G_w\left(r\right)$ be as defined in Equation (\ref{Gw(r)}), for each $w \in V$.

Then  
\begin{equation*}
\sum_{x \in H_{e,f}}\mu_u\left(B(x,r)\right)^q  \leq  C_2\left(q\right)C_3C_4\left(q\right)  \sum_{w \in V}G_w\left(r\right).
\end{equation*}
\end{lem}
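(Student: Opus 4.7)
The plan is to invoke Lemma~\ref{OSC5} and then bound the resulting sum $\sum_{\be} p_\be^q$ (over admissible $\be \in E_u^*(r)$ with $\bl_v \not\subset e_2\cdots e_{|\be|-N}$ for all $v$) by a constant multiple of $\sum_{w\in V} G_w(r)$. To each admissible $\be = e_1 e_2 \cdots e_{|\be|}$ I attach a ``middle segment'' $\bg$, defined as the unique prefix of $e_2\cdots e_{|\be|}$ whose contraction ratio lies in the window $[\alpha r, \beta r)$, and factorise $\be = e_1 \bg \mathbf{h}$. The lemma then follows by showing (i) $\bg$ is a legitimate summand of $G_{t(e_1)}(r)$, (ii) the overhead $p_{e_1}^q p_\mathbf{h}^q$ is uniformly bounded, and (iii) the map $\be \mapsto (t(e_1),\bg)$ is boundedly many-to-one; absorbing the constants from (ii) and (iii) into a single $C_4(q)$ then yields the estimate.

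Existence and uniqueness of $\bg$ follow because successive prefix ratios shrink by factors in $[r_{\min},1)$, and the window ratio $\beta/\alpha = (r_{\max}/r_{\min})^N (d_{\max}/d_{\min})/r_{\min}$ is at least $1/r_{\min}$, so the decreasing sequence of ratios cannot skip over the window. The crucial containment is that $r_{e_2\cdots e_{|\be|-N}} \leq r_\be/r_{\min}^{N+1} < r/(r_{\min}^{N+1}d_{\min}) = \beta r$ (using $r_\be < r/d_{\min}$), so the window-prefix $\bg$ must itself be a subpath of $e_2\cdots e_{|\be|-N}$. Hence the hypothesis $\bl_v \not\subset e_2\cdots e_{|\be|-N}$ transfers to $\bl_v \not\subset \bg$ for every $v \in V$, and together with $\bg \in E_{t(e_1)}^*$ and $r_\bg \in [\alpha r, \beta r)$ this establishes (i).

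For (ii) and (iii), the key point is that $|\mathbf{h}|$ is bounded by an absolute constant $M$: combining $r_{e_1}r_\bg < \beta r$ with $r_\be \geq r_{\min} r/d_{\max}$ gives $r_\mathbf{h} > r_{\min}^{N+2} d_{\min}/d_{\max}$ independently of $r$ and $\be$, and $r_\mathbf{h} \leq r_{\max}^{|\mathbf{h}|}$ then yields $|\mathbf{h}| \leq M$. Thus $p_{e_1}^q p_\mathbf{h}^q \leq \max(1, p_{\min}^{q(M+1)})$ gives (ii), and (iii) follows since to recover $\be$ from $(t(e_1),\bg)$ one needs only to choose $e_1 \in E^1$ (at most $\#E^1$ options) and a path $\mathbf{h}$ of length at most $M$ (boundedly many options). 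The main obstacle is that the calibration of $\alpha$, $\beta$, and $N$ built into the definitions is delicate: $\beta$ must be just large enough to force $\bg$ inside $e_2\cdots e_{|\be|-N}$ (so the forbidden-subpath condition transfers), while $\alpha$ must be just small enough to keep $|\mathbf{h}|$ uniformly bounded; once both of these are ensured, the rest of the argument is routine bookkeeping with contraction ratios, probabilities, and edge-set cardinalities.
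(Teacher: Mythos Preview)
Your approach is correct in spirit and reaches the same conclusion, but the paper takes a cleaner route. Rather than defining $\bg$ implicitly as ``a prefix of $e_2\cdots e_{|\be|}$ whose ratio lies in $[\alpha r,\beta r)$'' and then arguing it must sit inside $e_2\cdots e_{|\be|-N}$, the paper simply \emph{sets} $\bg = e_2\cdots e_{|\be|-N}$ from the outset (so $\be=\bs\bg\bt$ with $|\bs|=1$ and $|\bt|=N$ fixed) and verifies directly from the defining inequalities of $E_u^*(r)$ that $\alpha r\le r_{\bg}<\beta r$. With this choice the forbidden-subpath condition $\bl_v\not\subset\bg$ is immediate, the tail has length exactly $N$, and the map $\be\mapsto(\bs,\bg,\bt)$ is injective; no variable-length $\mathbf{h}$ or many-to-one bookkeeping is needed, and the constant $C_4(q)=k^{1+N}\bigl(\max_{e\in E^1}p_e^q\bigr)^{1+N}$ falls out at once. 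Your steps (ii) and (iii) are not wrong, just unnecessary once the decomposition is chosen this way.

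One genuine slip: your claim that there is a \emph{unique} prefix of $e_2\cdots e_{|\be|}$ with ratio in $[\alpha r,\beta r)$ is false in general. The window has width $\beta/\alpha\ge 1/r_{\min}$, so several consecutive prefixes can land inside it; your ``cannot skip'' argument establishes existence, not uniqueness. The repair is easy---take the \emph{shortest} such prefix, which is then automatically a prefix of $e_2\cdots e_{|\be|-N}$ since you have already shown $r_{e_2\cdots e_{|\be|-N}}<\beta r$---but as written the map $\be\mapsto\bg$ is not well-defined.
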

\begin{proof}
As we showed in the proof of Lemma \ref{OSCh}, for $r \in (0,\delta)$, $\be \in E_u^*\left(r\right)$ implies $\left|\be\right|\geq N+l+1$, so $\be$ can always be written as $\be=\bs\bg\bt$, for some paths $\bs,\bg,\bt$, with $\left|\bs\right|=1$, $\left|\bg\right|\geq l$ and $\left|\bt\right|=N$. From the definition of $E_u^*\left(r\right)$ in Equation \eqref{E_u*r}, for $\be=\bs\bg\bt \in E_u^*\left(r\right)$,
\begin{equation*}
r_{\min}^{N+1}r_\bg d_{\min} \leq r_{\bs\bg\bt}\left|F_{t(\bs\bg\bt)}\right| < r \leq  r_{\bs\bg\bt \vert_{\left|\bs\bg\bt\right|-1}}\left|F_{t(\bs\bg\bt \vert_{\left|\bs\bg\bt\right|-1})}\right| \leq r_{\max}^Nr_\bg d_{\max},
\end{equation*}
and so
\begin{equation}
\label{alpha beta}
\alpha r = \frac{r}{r_{\max}^N d_{\max}}\leq r_\bg < \frac{r}{r_{\min}^{N+1}d_{\min}}=\beta r.
\end{equation}
This gives  
\begin{align*}
\sum_{x \in H_{e,f}}\mu_u\left(B(x,r)\right)^q  & \leq   C_2\left(q\right)C_3 \sum_{\substack{\be \in E_u^*(r) \\ \forall v \in V : \ \bl_v \, \not\subset  \, e_2\ldots e_{\left|\be\right|-N}}} p_\be^q && (\textrm{\small{by Lemma \ref{OSC5}}}) \\
& = C_2\left(q\right)C_3 \sum_{ \substack{ \be = \bs \bg \bt \in E_u^*(r) \\ \left|\bs\right|=1, \, \left|\bt\right|=N, \, \left|\bg\right| \geq \,l \\ \forall v \in V : \ \bl_v \, \not\subset  \, e_2\ldots e_{\left|\be\right|-N} =  \bg } } p_{\bs \bg \bt}^q && \\
& \leq  C_2\left(q\right)C_3  \sum_{\bs \in E_u^1}  \sum_{\bt \in E^N} \sum_{ \substack{ \bg \in E^* \\  \left|\bg\right| \geq \,l \\ \alpha r \, \leq \, r_\bg \, < \, \beta r  \\ \forall v \in V : \ \bl_v \, \not\subset  \, \bg } } p_{\bs}^qp_{\bg}^q p_{\bt}^q && (\textrm{\small{by (\ref{alpha beta})}})  \\
& \leq  C_2\left(q\right)C_3C_4\left(q\right)  \sum_{ \substack{ \bg \in E^* \\  \left|\bg\right| \geq \,l \\ \alpha r \, \leq \, r_{\bg} \, < \, \beta r  \\ \forall v \in V : \ \bl_v \, \not\subset  \, \bg } } p_{\bg}^q &&  \\
& =  C_2\left(q\right)C_3C_4\left(q\right)  \sum_{w \in V} \sum_{ \substack{ \bg \in E_w^* \\  \left|\bg\right| \geq \,l \\ \alpha r \, \leq \, r_{\bg} \, < \, \beta r \\ \forall v \in V : \ \bl_v \, \not\subset  \, \bg } } p_\bg^q && \\
& =  C_2\left(q\right)C_3C_4\left(q\right)  \sum_{w \in V} G_w\left(r\right) && (\textrm{\small{by (\ref{g greater than l})}}).
\end{align*}
The positive constant $C_4\left(q\right)$, which depends on $q$, is given by
\begin{equation*}
C_4\left(q\right) = k^{1+N}(\max \{p_e^q:  e \in E^1 \})^{1+N},  
\end{equation*}
$k$ being the maximum number of edges leaving any vertex in the directed graph. 
\end{proof}

\begin{lem}
\label{OSC7}
Let $r \in (0,\delta)$, and let $\G_w\left(r\right)$ be as defined in Equation (\ref{mathcalGw(r)}), for each $w \in V$. Then
\begin{equation*}
\G_w\left(r\right)  \leq  \sum_{z \in V} \sum_{\substack{\bs \in E_{wz}^l \\ \bs \neq \bl_w}}p_{\bs}^qr_\bs^\gamma  \G_z\biggl(\frac{r}{r_{\bs}}\biggr).
\end{equation*}
\end{lem}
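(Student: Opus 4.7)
The plan is to factor each path $\bg$ appearing in the sum defining $G_w(r)$ as $\bg = \bs\bg'$, where $\bs$ is the prefix of length $l$. By Equation~\eqref{g greater than l}, every $\bg$ contributing to $G_w(r)$ has $|\bg|\geq l$, so such a splitting is always legitimate (with $\bg'$ possibly the empty path when $|\bg|=l$). Setting $z=t(\bs)$, we have $\bs\in E_{wz}^l$ and $\bg'\in E_z^*$, with $r_\bg=r_\bs r_{\bg'}$ and $p_\bg^q=p_\bs^q p_{\bg'}^q$.

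Next I would track how each of the three constraints on $\bg$ transfers to $\bs$ and $\bg'$. First, the size constraint $\alpha r\leq r_\bg<\beta r$ rewrites exactly as
\begin{equation*}
\alpha\,\frac{r}{r_\bs}\ \leq\ r_{\bg'}\ <\ \beta\,\frac{r}{r_\bs},
\end{equation*}
i.e. $\bg'$ satisfies the size constraint appearing in the definition of $G_z(r/r_\bs)$. Second, the hypothesis $\bl_w\not\subset\bg$ implies $\bs\neq\bl_w$, since $\bs$ is a prefix of $\bg$ and hence a subpath. Third, for every $v\in V$, the relation $\bg'\subset\bg$ (as $\bg'$ is a suffix) forces $\bl_v\not\subset\bg'$ whenever $\bl_v\not\subset\bg$.

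With these observations in hand, I would reorganise the sum by first choosing the initial block $\bs$ and then summing over the tail $\bg'$. Dropping any subpath conditions that constrain $\bg$ but not $\bg'$ (namely, subpath occurrences straddling the boundary or lying inside $\bs$) can only enlarge the inner sum, giving
\begin{equation*}
G_w(r)\ \leq\ \sum_{z\in V}\ \sum_{\substack{\bs\in E_{wz}^l\\ \bs\neq\bl_w}} p_\bs^q\!\!\sum_{\substack{\bg'\in E_z^*\\ \alpha(r/r_\bs)\leq r_{\bg'}<\beta(r/r_\bs)\\ \forall v\in V:\ \bl_v\not\subset\bg'}} p_{\bg'}^q\ =\ \sum_{z\in V}\ \sum_{\substack{\bs\in E_{wz}^l\\ \bs\neq\bl_w}} p_\bs^q\,G_z\!\left(\frac{r}{r_\bs}\right).
\end{equation*}
Multiplying both sides by $r^{\gamma(q)}$ and using $r^{\gamma}=r_\bs^{\gamma}(r/r_\bs)^{\gamma}$ converts each $G_z(r/r_\bs)$ into $\G_z(r/r_\bs)$ and produces the claimed bound.

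The one delicate point — which is really the only thing to check carefully rather than a serious obstacle — is the bookkeeping on the subpath condition: verifying that discarding the stronger requirement $\bl_v\not\subset\bg$ in favour of $\bl_v\not\subset\bg'$ and $\bs\neq\bl_w$ preserves the direction of the inequality, and that the range change on $r_{\bg'}$ is exact. The case $\bg=\bs$ (so $\bg'$ is empty with $r_{\bg'}=1$ and $p_{\bg'}^q=1$) fits in automatically because $\alpha(r/r_\bs)\leq 1<\beta(r/r_\bs)$ coincides with $\alpha r\leq r_\bs<\beta r$.
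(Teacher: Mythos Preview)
Your proof is correct and follows essentially the same route as the paper's: split each $\bg$ with $|\bg|\geq l$ into a length-$l$ prefix $\bs$ and a tail, observe that the subpath constraints $\forall v:\bl_v\not\subset\bg$ imply the weaker pair $\bs\neq\bl_w$ and $\forall v:\bl_v\not\subset\bg'$ (the paper records this as the set inclusion \eqref{st}), translate the size window exactly to the tail, identify $G_z(r/r_\bs)$, and scale by $r^{\gamma}$. Your handling of the empty-tail case is in fact more explicit than the paper's remark that the empty path is ``summed over but doesn't contribute''.
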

\begin{proof}
It is clear from the definition of a subpath in Subsection \ref{nvertex}, that 
\begin{equation}
\label{st}
\begin{split}
\bigl\{ \bs \bt: \bs \in E_{wz}^l, \, &\bt \in E_z^* \textrm{ and } \forall v \in V, \,  \bl_v  \not\subset  \bs \bt \bigr\} \\ 
&\subset \bigl\{ \bs \bt: \bs \in E_{wz}^l, \, \bt \in E_z^*, \, \bs \neq \bl_w \textrm{ and } \forall v \in V, \,  \bl_v  \not\subset  \bt \bigr\},
\end{split}
\end{equation}
and this implies 
\begin{align*}
G_w\left(r\right)  &= \sum_{ \substack{ \bg \in E_w^* \\ \alpha r \, \leq \, r_\bg \, < \, \beta r\\ \forall v \in V : \ \bl_v \, \not\subset  \, \bg } } p_\bg^q &&\\
&= \sum_{ \substack{ \bg \in E_w^* \\  \left|\bg\right| \geq \,l \\ \alpha r \, \leq \, r_\bg \, < \, \beta r \\ \forall v \in V : \ \bl_v \, \not\subset  \, \bg } } p_\bg^q &&(\textrm{by (\ref{g greater than l}), as $r \in (0,\delta)$}) \\
&=  \sum_{ \substack{ \bs \in E_w^l \\ \bt \in E_{t(\bs)}^*  \\  \alpha r \, \leq \, r_{\bs \bt} \, < \, \beta r \\ \forall v \in V : \ \bl_v \, \not\subset  \, \bs \bt } } p_{\bs \bt}^q &&\\
&=  \sum_{z \in V}\sum_{ \substack{ \bs \in E_{wz}^l \\ \bt \in E_z^* \\  \alpha r \, \leq \, r_{\bs \bt} \, < \, \beta r \\ \forall v \in V : \ \bl_v \, \not\subset  \, \bs \bt } } p_{\bs \bt}^q &&\\
&\leq  \sum_{z \in V}\sum_{ \substack{ \bs \in E_{wz}^l \\ \bs \neq \bl_w \\ \bt \in E_z^* \\  \alpha r \, \leq \, r_{\bs \bt} \, < \, \beta r \\  \forall v \in V : \ \bl_v \, \not\subset  \, \bt } } p_{\bs \bt}^q  && (\textrm{by (\ref{st})}) \\
&=\quad \sum_{z \in V} \sum_{\substack{\bs \in E_{wz}^l \\ \bs \neq \bl_w}}p_{\bs}^{\,q} \sum_{ \substack{ \bt \in E_z^* \\  \alpha \frac{r}{r_{\bs}} \, \leq \, r_{\bt} \, < \, \beta \frac{r}{r_{\bs}} \\ \forall v \in V : \ \bl_v \, \not\subset  \, \bt } } p_{\bt}^{\,q} &&\\
&=  \sum_{z \in V} \sum_{\substack{\bs \in E_{wz}^l \\ \bs \neq \bl_w}}p_{\bs}^{\,q} \, G_z\left(\frac{r}{r_{\bs}}\right).
\end{align*}
We have used the convention here that $E_{t(\bs)}^*, E_z^*$ include the empty path which is summed over but doesn't contribute to the sum. 

As $\G_w\left(r\right) = r^{\gamma}G_w\left(r\right)$, we obtain,
\begin{align*}
\G_w\left(r\right) \ &\leq \ \sum_{z \in V} \sum_{\substack{\bs \in E_{wz}^l \\ \bs \neq \bl_w}}p_{\bs}^{\,q}r_{\bs}^{\gamma} \left(\frac{r}{r_{\bs}}\right)^{\gamma}\, G_z\left(\frac{r}{r_{\bs}}\right)  \\
& = \ \sum_{z \in V} \sum_{\substack{\bs \in E_{wz}^l \\ \bs \neq \bl_w}}p_{\bs}^{\,q}r_{\bs}^{\gamma} \, \G_z\left(\frac{r}{r_{\bs}}\right).
\qedhere
\end{align*}
\end{proof}

In the next two lemmas we use the following notations for column vectors 
\begin{equation*}
\boldsymbol{\G}\left(r\right) = \left(\G_w\left(r\right)\right)_{w \in V}^\transpose,
\end{equation*} 
and 
\begin{equation*}
\sup_{a\leq r} \boldsymbol{\G}\left(r\right) = \biggl( \ \sup_{a \leq r} \G_w\left(r\right) \ \biggr)_{w \in V}^\transpose.
\end{equation*} 

\begin{lem}
\label{OSC8}

Let $\bb$ be the positive right eigenvector of $\bB(q,\gamma,l)$, with eigenvalue $\rho(\bB(q,\gamma,l))=1$, as given in Equation (\ref{b}). Let $a \in (0, \delta)$ and let $\G_w\left(r\right)$ be as defined in Equation (\ref{mathcalGw(r)}), for each $w \in V$. 

Then
\begin{equation*} 
\sup_{a \leq r }\boldsymbol{\G}\left(r\right)\leq C_a\left(q\right) \bb,
\end{equation*}
for some positive $C_a\left(q\right)$.
\end{lem}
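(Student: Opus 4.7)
The approach is a downward induction on $r$, starting from a base region where $\boldsymbol{\G}$ is already bounded, and then extending the bound towards $a$ using Lemma \ref{OSC7} together with the eigenvector identity $\bB(q,\gamma,l)\bb = \bb$ from \eqref{b}. The base region will be $[\delta, \infty)$, which lies beyond the reach of the recursion in Lemma \ref{OSC7}, and the induction will extend the bound toward $0$ by factors of $r_{\max}^l$ at a time, without any inflation in the constant.

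\emph{Base case.} For $r \geq R_0 := 1/\alpha = r_{\max}^N d_{\max}$, the lower constraint $r_\bg \geq \alpha r \geq 1$ in the defining sum \eqref{Gw(r)} excludes every non-empty finite path, so $G_w(r) = 0$ and hence $\G_w(r) = 0$. On the compact interval $[\delta, R_0]$, the condition $r_\bg \geq \alpha\delta$ forces any contributing path to have length bounded by some $L_0 = L_0(\delta) \in \mathbb{N}$, so $G_w(r)$ is a finite sum with uniformly bounded terms and $\G_w(r) = r^{\gamma} G_w(r)$ is uniformly bounded on $[\delta,R_0]$, say by $M_w < \infty$. Since $\bb$ is componentwise strictly positive by Perron-Frobenius, one may take $C_0 := \max_{w \in V} M_w / \bb_w$ and obtain $\G_w(r) \leq C_0 \bb_w$ for every $r \geq \delta$ and every $w \in V$.

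\emph{Inductive extension.} Set $r_k := \delta \, r_{\max}^{kl}$ for $k \geq 0$ and suppose that $\G_w(r) \leq C_0 \bb_w$ holds on $[r_k, \infty)$ for every $w$. For $r \in [r_{k+1}, r_k) \subset (0, \delta)$, Lemma \ref{OSC7} gives
\[
\G_w(r) \leq \sum_{z \in V} \sum_{\substack{\bs \in E_{wz}^l \\ \bs \neq \bl_w}} p_\bs^q \, r_\bs^{\gamma} \, \G_z(r/r_\bs).
\]
Since $r_\bs \leq r_{\max}^l$, we have $r/r_\bs \geq r_{k+1}/r_{\max}^l = r_k$, so the inductive hypothesis applies to each $\G_z(r/r_\bs)$; substituting and invoking \eqref{b} collapses the right-hand side to $C_0 \bigl(\bB(q,\gamma,l)\bb\bigr)_w = C_0 \bb_w$. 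Hence the bound extends to $[r_{k+1}, \infty)$. Because $r_k \to 0$ as $k \to \infty$, after finitely many iterations we have $r_k < a$, which proves $\sup_{a \leq r} \boldsymbol{\G}(r) \leq C_0 \bb$ with $C_a(q) := C_0 > 0$.

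The main obstacle is really only the base-case finiteness claim: verifying that $\G_w(r)$ is uniformly bounded on the compact interval $[\delta, R_0]$, which reduces to showing that only finitely many paths contribute to $G_w(r)$ once $r$ is bounded away from $0$. Once this is in hand, the inductive step is essentially automatic, since the eigenvalue identity $\bB\bb = \bb$ is precisely what allows the single constant $C_0$ to propagate down to $r = a$ without inflation, despite $\rho(\bB(q,\gamma,l)) = 1$ preventing the more naive Neumann-series argument.
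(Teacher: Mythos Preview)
Your proof is correct but does substantially more work than Lemma~\ref{OSC8} requires, and in fact your inductive extension is precisely the content of the \emph{next} lemma in the paper, Lemma~\ref{OSC9}.

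The paper's proof of Lemma~\ref{OSC8} is a one-step finiteness argument: for $r\ge a$ the constraint $\alpha r\le r_{\bg}$ forces $r_{\bg}\ge \alpha a$, so only finitely many paths ever appear in $G_w(r)$; combined with the estimate $r^{\gamma}\le C\, r_{\bg}^{\gamma}$ (obtained from $\alpha r\le r_{\bg}<\beta r$ in the two sign cases for $\gamma$), this gives a finite uniform upper bound on $\G_w(r)$ over $[a,\infty)$. Since $\bb>0$ componentwise, one then scales to get $C_a(q)$. No recursion, no eigenvector identity, and no use of Lemma~\ref{OSC7}.

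Your base case on $[\delta,R_0]$ is exactly this finiteness argument, but restricted to the smaller interval $[\delta,R_0]$ rather than $[a,R_0]$; nothing in the argument uses $a=\delta$ in any essential way, so you could have applied it directly on $[a,R_0]$ and been done. Your inductive extension via Lemma~\ref{OSC7} and the eigenvector identity $\bB\bb=\bb$ is correct, but it is the mechanism the paper reserves for Lemma~\ref{OSC9}, where the goal is to push the bound all the way to $r\to 0^+$ with a constant independent of $a$. In effect you have merged Lemmas~\ref{OSC8} and~\ref{OSC9} into a single argument; this is fine logically, and arguably cleaner as a presentation, but for Lemma~\ref{OSC8} alone it is an overshoot.
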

\begin {proof} For each $q \in \mathbb{R}$, $\gamma\left(q\right)$ is uniquely defined as the real number which satisfies $\rho (\bB(q,\gamma,l) ) = 1$ and $\bb$ is the associated positive eigenvector with eigenvalue $1$, as given in Equation (\ref{b}). To prove the lemma it is enough to show, for each $w \in V$, that
\begin{equation*} 
\sup_{a \leq r }\G_w\left(r\right)< +\infty.
\end{equation*}
As the eigenvector $\bb>0$, a positive number $C_a\left(q\right)$ can then be determined. We remind the reader that
\begin{equation*}
\alpha  = \frac{1}{r_{\max}^N d_{\max}} \quad \textrm{and} \quad \beta  = \frac{1}{r_{\min}^{N+1}d_{\min}}.
\end{equation*}
If $\gamma\left(q\right) \geq 0$ then $ \alpha r \leq r_\bg$ implies 
\begin{equation*}
r^\gamma \leq \left(\frac{r_\bg}{\alpha}\right)^\gamma = \left(r_{\max}^N d_{\max}r_\bg\right)^\gamma.
\end{equation*}
If $\gamma\left(q\right) < 0$ then $r_\bg < \beta r$ implies 
\begin{equation*}
r^\gamma < \left(\frac{r_\bg}{\beta}\right)^\gamma = \left(r_{\min}^{N+1}d_{\min}r_\bg\right)^\gamma.
\end{equation*}
So for $r \in [a,\infty)$, and each $w \in V$,
\begin{align*}
0  \ &\leq  \ \G_w\left(r\right)  \ =  \ r^{\gamma}G_w\left(r\right) = \sum_{ \substack{ \bg \in E_w^* \\ \alpha r \, \leq \, r_\bg \, < \, \beta r\\ \forall v \in V : \ \bl_v \, \not\subset  \, \bg } } p_\bg^qr^{\gamma} \\
&\leq  \max \left\{(r_{\max}^N d_{\max})^\gamma, (r_{\min}^{N+1}d_{\min})^\gamma \right\}\sum_{ \substack{ \bg \in E_w^* \\ \alpha a \, \leq \, r_\bg \\ \forall v \in V : \ \bl_v \, \not\subset  \, \bg } } p_\bg^qr_\bg^{\gamma} <  \ +\infty.   
\end{align*} 
The strict inequality holds as there are only a finite number of paths $\bg \in E_w^*$ with $\alpha a  \leq  r_\bg$. 
\end{proof}

\begin{lem}
\label{OSC9}
Let $\bb$ be the positive right eigenvector of $\bB(q,\gamma,l)$, with eigenvalue $\rho(\bB(q,\gamma,l))=1$, as given in Equation (\ref{b}). Let $a \in (0, \delta)$ and let $\G_w\left(r\right)$ be as defined in Equation (\ref{mathcalGw(r)}), for each $w \in V$. 

Then
\begin{equation*} 
\sup_{0 < r }\boldsymbol{\G}\left(r\right)\leq C_a\left(q\right) \bb,
\end{equation*}
for some positive $C_a\left(q\right)$.
\end{lem}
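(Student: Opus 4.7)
The plan is to extend Lemma \ref{OSC8} from the interval $[a, \infty)$ to all of $(0, \infty)$ by iterating the recursive bound of Lemma \ref{OSC7}. Fix $a \in (0, \delta)$; then Lemma \ref{OSC8} gives $\boldsymbol{\G}(r) \leq C_a(q) \bb$ for every $r \geq a$. I shall prove by induction on $j \geq 0$ that the same bound holds for every $r \geq a r_{\max}^{jl}$. Since $r_{\max} < 1$ forces $a r_{\max}^{jl} \to 0$ as $j \to \infty$, the union of these intervals is $(0, \infty)$ and the lemma follows. The base case $j = 0$ is Lemma \ref{OSC8} itself.

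For the inductive step, assume the bound at level $j-1$ and let $r \in [a r_{\max}^{jl}, a r_{\max}^{(j-1)l})$. Because $r < a < \delta$, Lemma \ref{OSC7} is applicable and yields
\[
\G_w(r) \; \leq \; \sum_{z \in V} \sum_{\substack{\bs \in E_{wz}^l \\ \bs \neq \bl_w}} p_\bs^{\,q} \, r_\bs^{\gamma} \, \G_z\!\left(\frac{r}{r_\bs}\right).
\]
Each path $\bs$ in the sum has length $l$, so $r_\bs \leq r_{\max}^l$, and therefore $r/r_\bs \geq r/r_{\max}^l \geq a r_{\max}^{(j-1)l}$, placing the argument inside the range already controlled by the inductive hypothesis. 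Substituting $\G_z(r/r_\bs) \leq C_a(q) b_z$ and invoking Equation (\ref{b}), which says $\bB(q,\gamma,l)\bb = \bb$, the right-hand side collapses to $C_a(q)\,(\bB(q,\gamma,l)\bb)_w = C_a(q) b_w$, closing the induction.

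The only point requiring care is the choice of the geometric partition $(0,a) = \bigcup_{j \geq 1}[a r_{\max}^{jl}, a r_{\max}^{(j-1)l})$: the step ratio $r_{\max}^{-l}$ is precisely the worst-case expansion one application of Lemma \ref{OSC7} can produce in the argument, so the hypothesis transfers cleanly from one layer to the next without ever needing $r/r_\bs < a r_{\max}^{(j-1)l}$. What makes the induction actually close — which I take to be the genuine content of the argument rather than a real obstacle — is the exact compatibility between the Perron eigenvalue equation $\bB(q,\gamma,l)\bb = \bb$ and the linear form of the recursion: if $\rho(\bB(q,\gamma,l))$ were $>1$ (or had $\bb$ merely be a super-solution), iterating would multiply the constant $C_a(q)$ by a fixed factor at every layer $j$, destroying uniformity as $j \to \infty$ (that is, as $r \to 0^+$).
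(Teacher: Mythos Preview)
Your proof is correct and follows essentially the same route as the paper: both arguments induct down the geometric scales $a\,r_{\max}^{jl}$ (the paper writes $\Delta=r_{\max}^l$), feeding Lemma~\ref{OSC7} at each layer and using the Perron eigen-equation $\bB(q,\gamma,l)\bb=\bb$ to keep the constant $C_a(q)$ fixed through the iteration. Your presentation is marginally tidier in that you partition $(0,a)$ directly and invoke the inductive hypothesis on $[a\,r_{\max}^{(j-1)l},\infty)$, whereas the paper first bounds $\sup_{a\Delta^{k+1}\le r<\delta}\boldsymbol{\G}(r)$ and then patches it together with the previous step---but the substance is identical.
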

\begin{proof}
As in Lemma \ref{OSC8}, let $a \in(0,\delta)$ be fixed, and let $\Delta=r_{\max}^l<1$, then $\Delta \geq r_{\bs}$ for all paths $\bs \in E^l$, and $a\Delta<a<\delta$. Consider $r \in [a\Delta,\delta)$, with $a\Delta \leq r<\delta$, then
\begin{equation}
\label{x}
a \leq a \frac{\Delta}{r_{\bs}} \leq \sigma <\frac{\delta}{r_{\bs}},
\end{equation} 
where $\sigma=\frac{r}{r_{\bs}}$. It follows that
\begin{align*}
\sup_{a\Delta\leq r < \delta} \G_w\left(r\right) &\leq  \sup_{a\Delta\leq r < \delta}\Biggl\{  \ \sum_{z \in V} \sum_{\substack{\bs \in E_{wz}^l \\ \bs \neq \bl_w}}p_\bs^qr_\bs^\gamma  \G_z\left(\frac{r}{r_{\bs}}\right) \ \Biggr\} &&(\textrm{by Lemma \ref{OSC7}}) \\
&= \sup_{a \frac{\Delta}{r_{\bs}} \leq \sigma < \frac{\delta}{r_{\bs}}}\Biggl\{  \ \sum_{z \in V} \sum_{\substack{\bs \in E_{wz}^l \\ \bs \neq \bl_w}}p_{\bs}^qr_\bs^\gamma  \G_z(\sigma) \ \Biggr\} &&\\
&\leq  \sup_{a  \leq \sigma}\Biggl\{  \ \sum_{z \in V} \sum_{\substack{\bs \in E_{wz}^l \\ \bs \neq \bl_w}}p_{\bs}^qr_\bs^\gamma \G_z(\sigma) \ \Biggr\} && (\textrm{by Inequality (\ref{x})})  \\
&\leq \sum_{z \in V} \sum_{\substack{\bs \in E_{wz}^l \\ \bs \neq \bl_w}}p_\bs^qr_\bs^\gamma  \sup_{a  \leq \sigma }\biggl\{\G_z(\sigma)\biggr\} && \\
&= \sum_{z \in V} \sum_{\substack{\bs \in E_{wz}^l \\ \bs \neq \bl_w}}p_\bs^qr_\bs^\gamma  \sup_{a  \leq r }\biggl\{\G_z(r)\biggr\}.
\end{align*} 
In matrix form, using the notation $\sup \boldsymbol{\G}\left(r\right) = (\sup \G_w\left(r\right) )_{w \in V}^\transpose$, this is
\begin{equation*}
\sup_{a\Delta\leq r < \delta} \boldsymbol{\G}\left(r\right)   \leq   \mathbf{B}\left(q,\gamma,l\right)   \sup_{a \leq r }\boldsymbol{\G}\left(r\right). 
\end{equation*}
By Lemma \ref{OSC8}, we can find a positive number $C_a\left(q\right)$ such that 
\begin{equation} \label{G_vector_bound_1}
\sup_{a \leq r }\boldsymbol{\G}\left(r\right)\leq C_a\left(q\right) \bb,
\end{equation}
and this means that
\begin{equation} \label{G_vector_bound_2}
\sup_{a\Delta\leq r < \delta} \boldsymbol{\G}\left(r\right) \leq \bB(q,\gamma,l) \sup_{a \leq r }\boldsymbol{\G}\left(r\right) \leq \mathbf{B}\left(q,\gamma,l\right) C_a\left(q\right) \bb = C_a\left(q\right) \bb, 
\end{equation}
the last equality holds as $\bb$ is a positive eigenvector of $\mathbf{B}\left(q,\gamma,l\right)$, with eigenvalue $1$.
Since $a\Delta<a<\delta$, inequalities (\ref{G_vector_bound_1}) and (\ref{G_vector_bound_2}) together imply 
\begin{equation} 
\label{G_vector_bound_3} 
\sup_{a\Delta\leq r } \boldsymbol{\G}\left(r\right) \leq \ C_a\left(q\right) \bb. 
\end{equation}

Now let $n\in \mathbb{N}$ and let $P\left(n\right)$ be the following statement
\begin{equation*} 
\sup_{a\Delta^n \leq r } \boldsymbol{\G}\left(r\right) \leq \ C_a\left(q\right) \bb. 
\end{equation*}
We use induction to prove that $P\left(n\right)$ is true for all $n \in \mathbb{N}$. 

\medskip

\emph{Induction base.}

$P(1)$ is true by Inequality (\ref{G_vector_bound_3}). 

\medskip

\emph{Induction hypothesis.}

Let $k \in \mathbb{N}$ and suppose $P\left(k\right)$ is true, that is suppose
\begin{equation} 
\label{IH} 
\sup_{a\Delta^k \leq r } \boldsymbol{\G}\left(r\right) \leq \ C_a\left(q\right) \bb. 
\end{equation}

\medskip

\emph{Induction step.}

Let $r \in [a\Delta^{k+1}, \delta)$ then as in Inequality (\ref{x}) above it follows that 
\begin{equation}
\label{x^k}
a\Delta^k \leq a \frac{ \Delta^{k+1}}{r_{\bs}} \leq \sigma<\frac{\delta}{r_{\bs}},
\end{equation}
where $\sigma=\frac{r}{r_{\bs}}$. We now obtain
\begin{align*}
\sup_{a\Delta^{k+1}\leq r < \delta} \G_w\left(r\right) &\leq  \sup_{a\Delta^{k+1}\leq r < \delta}\Biggl\{  \ \sum_{z \in V} \sum_{\substack{\bs \in E_{wz}^l \\ \bs \neq \bl_w}}p_\bs^qr_\bs^\gamma \G_z\left(\frac{r}{r_{\bs}}\right) \ \Biggr\} &&(\textrm{by Lemma \ref{OSC7}}) \\
&= \sup_{a \frac{\Delta^{k+1}}{r_{\bs}} \leq \sigma < \frac{\delta}{r_{\bs}}}\Biggl\{  \ \sum_{z \in V} \sum_{\substack{\bs \in E_{wz}^l \\ \bs \neq \bl_w}}p_\bs^qr_\bs^\gamma \G_z(\sigma) \ \Biggr\}  && \\
&\leq   \quad  \sup_{a\Delta^k  \leq \sigma }\Biggl\{  \ \sum_{z \in V} \sum_{\substack{\bs \in E_{wz}^l \\ \bs \neq \bl_w}}p_\bs^qr_\bs^\gamma \G_z(\sigma) \ \Biggr\} &&(\textrm{by Inequality (\ref{x^k})}) \\
&\leq \quad  \sum_{z \in V} \sum_{\substack{\bs \in E_{wz}^l \\ \bs \neq \bl_w}}p_\bs^qr_\bs^\gamma \sup_{a\Delta^k  \leq \sigma }\biggl\{\G_z(\sigma)\biggl\} && \\
&= \quad  \sum_{z \in V} \sum_{\substack{\bs \in E_{wz}^l \\ \bs \neq \bl_w}}p_\bs^qr_\bs^\gamma \sup_{a\Delta^k  \leq r }\biggl\{\G_z(r)\biggl\}.
\end{align*} 
In matrix form, with $\sup \boldsymbol{\G}\left(r\right) = (\sup \G_w\left(r\right) )_{w \in V}^\transpose$, this is
\begin{equation*}
\sup_{a\Delta^{k+1}\leq r < \delta} \boldsymbol{\G}\left(r\right)   \leq   \mathbf{B}\left(q,\gamma,l\right)   \sup_{a\Delta^k \leq r }\boldsymbol{\G}\left(r\right). 
\end{equation*}
By the induction hypothesis, Inequality (\ref{IH}),
\begin{equation} 
\label{G_vector_bound_4}
\sup_{a\Delta^{k+1}\leq r < \delta} \boldsymbol{\G}\left(r\right)  \leq  \mathbf{B}\left(q,\gamma,l\right) \sup_{a\Delta^k \leq r }\boldsymbol{\G}\left(r\right)  \leq   \mathbf{B}\left(q,\gamma,l\right) C_a\left(q\right) \bb = C_a\left(q\right) \bb. 
\end{equation}
As $a\Delta^{k+1}<a\Delta^k<\delta$, inequalities (\ref{IH}) and (\ref{G_vector_bound_4}) together imply
\begin{equation*}
\sup_{a\Delta^{k+1} \leq r } \boldsymbol{\G}\left(r\right) \leq C_a\left(q\right) \bb, 
\end{equation*}
which proves that $P\left(k\right)$ implies $P\left(k+1\right)$. This completes the induction step.

By the principal of mathematical induction $P(n)$ is true for all $n \in \mathbb{N}$, that is 
\begin{equation*} 
\sup_{a\Delta^n \leq r } \boldsymbol{\G}\left(r\right) \leq \ C_a\left(q\right) \bb 
\end{equation*}
for all $n \in \mathbb{N}$. Therefore
\begin{equation*} 
\sup_{0 < r } \boldsymbol{\G}\left(r\right) \leq \ C_a\left(q\right) \bb. 
\qedhere
\end{equation*}
\end{proof}

We now have all the results needed for the proof of Theorem 1.2 which is given in the next section.

\section{Proof of Theorem 1.2}\label{five}

\begin{proof}
For $r\in \left(0,\delta\right)$,
\begin{align*}
\sum_{x \in H_{e,f}}\mu_u\left(B(x,r)\right)^q  &\leq  C_2\left(q\right)C_3C_4\left(q\right)  \sum_{w \in V}G_w\left(r\right) && (\textrm{by Lemma \ref{OSC6}}) \\
&= C_2\left(q\right)C_3C_4\left(q\right)  \sum_{w \in V}r^{-\gamma}\G_w\left(r\right) && (\textrm{by (\ref{mathcalGw(r)})})\\
&\leq  C_2\left(q\right)C_3C_4\left(q\right)C_5\left(q\right)r^{-\gamma}, 
\end{align*}
where the last inequality follows by Lemma \ref{OSC9}, putting 
\begin{equation*}
C_5\left(q\right) = n C_a\left(q\right) \max\bigl\{b_v  : v \in V \bigr\}, 
\end{equation*}
where $n$ is the number of vertices in the graph, and the positive eigenvector of the matrix $\bB(q,\gamma,l)$ is $\bb = (b_v)_{v \in V}^\transpose$. This means that for $e,f \in E_u^1,\, e \neq f$,
\begin{align*}
Q_{e,f}^q\left(r\right)&=M_u^q\left(S_e(F_{t(e)})\cap S_f(F_{t(f)})(r), r \right) \\
&=\sup \Bigl\{ \  \sum_{x \in H_{e,f}} \mu_u\left(B(x,r)\right)^q :  H_{e,f}  \ \textrm{  is an } \\
& \quad \quad \quad \quad \quad \quad \quad \quad r\textrm{-separated subset of }  S_e(F_{t(e)})\cap S_f(F_{t(f)})(r) \ \Bigr\}  \\
&\leq  C_2\left(q\right)C_3C_4\left(q\right)C_5\left(q\right)r^{-\gamma}  \\
&=C_{e,f}\left(q\right)r^{-\gamma}. 
\qedhere
\end{align*}
\end{proof}

 \emph{Acknowledgement.} The author was supported by an EPSRC doctoral training award whilst undertaking this work (except for Subsection \ref{Riemann integrable}).

%\bibliographystyle{amsplain}
 
%\bibliography{P}

\end{document}